\setlist[enumerate,1]{label={(\alph*)}}
\setlist[enumerate,2]{label={(\roman*)}}
\newtheorem{theorem}{Theorem}[section]
\newtheorem{prop}[theorem]{Proposition}
\newtheorem{lem}[theorem]{Lemma}
\theoremstyle{definition}
\newtheorem{dfn}[theorem]{Definition}
\newtheorem{nn}[theorem]{Notation}
\theoremstyle{remark}
\newtheorem{obs}[theorem]{Observation}
\newcommand{\ignore}[1]{}
\newcommand{\CC}{{{c}}}
\definecolor{Red}{rgb}{1,0,0}
\definecolor{Blue}{rgb}{0,0,1}
\definecolor{Olive}{rgb}{0.41,0.55,0.13}
\definecolor{Yarok}{rgb}{0,0.5,0}
\definecolor{Green}{rgb}{0,1,0}
\definecolor{MGreen}{rgb}{0,0.8,0}
\definecolor{DGreen}{rgb}{0,0.55,0}
\definecolor{Yellow}{rgb}{1,1,0}
\definecolor{Cyan}{rgb}{0,1,1}
\definecolor{Magenta}{rgb}{1,0,1}
\definecolor{Orange}{rgb}{1,.5,0}
\definecolor{Violet}{rgb}{.5,0,.5}
\definecolor{Purple}{rgb}{.75,0,.25}
\definecolor{Brown}{rgb}{.75,.5,.25}
\definecolor{Grey}{rgb}{.5,.5,.5}
\begin{document}
\title{A sharp threshold for spanning $2-$spheres in random $2-$complexes}
\author{Zur Luria and Ran J. Tessler}
\address{Institute for Theoretical Studies, ETH Z\"urich}

\begin{abstract}
A Hamiltonian cycle in a graph is a spanning subgraph that is homeomorphic to a circle. With this in mind, it is natural to define a Hamiltonian $d$-sphere in a $d$-dimensional simplicial complex as a spanning subcomplex that is homeomorphic to a $d$-dimensional sphere.

We consider the Linial-Meshulam model for random simplicial complexes, and prove that there is a sharp threshold at $p=\sqrt{\frac{e}{\gamma n}}$ for the appearance of a Hamiltonian $2$-sphere in a random $2$-complex, where $\gamma = 4^4/3^3$.
\end{abstract}

\maketitle

\section{Introduction}
A classical theorem of P\'{o}sa \cite{Posa} states that the threshold for the appearance of a Hamiltonian cycle in the random graph $G(n,p)$ is $\frac{\log(n)}{n}$. At first sight, this result is surprising, as a first moment estimate indicates a threshold of $\frac{1}{n}$. A second thought shows that in fact, below $\Theta(\frac{log n}{n})$ there are isolated vertices with high probability, and hence no Hamiltonian cycles.

We are interested in the analogous question in higher dimensions. For this purpose, one must define a generalization of the concept of a Hamiltonian cycle to simplicial complexes, and indeed, several such definitions exist in the literature (see for example \cite{Be76, KK99}). The most popular defines a Hamiltonian cycle in a $d$-dimensional complex $X$ to be an ordering of the vertices such that every $(d+1)$ consecutive vertices form a simplex of $X$.
In this definition, however, a Hamiltonian cycle remains a ``1-dimensional" object.

Another way to view a Hamiltonian cycle is as a spanning subcomplex that is homeomorphic to a circle. From this point of view, the following definition is natural.

\begin{dfn}
A Hamiltonian $d$-sphere in a $d$-dimensional simplicial complex is a spanning subcomplex that is homeomorphic to a $d$-dimensional sphere.
\end{dfn}

Many questions suggest themselves. Are there sufficient conditions for a simplicial complex to contain a Hamiltonian sphere? Another question is: Under what conditions is there an efficient algorithm to find a Hamiltonian sphere in a given complex?

In this paper, we investigate the appearance of a Hamiltonian sphere in the random setting.
Let $X_d(n,p)$ denote the Linial-Meshulam model for random $d$-complexes, which is an extension of the Erd\'{o}s-Renyi random graph model to simplicial complexes. A simplicial complex $X \sim X_d(n,p)$ has $n$ vertices, and each $d$-dimensional simplex is present with probability $p$, independently of the other simplices.
We ask the following question for $d=2$: What is the threshold for a random $X \sim X_d(n,p)$ to contain a Hamiltonian $d$-sphere?

Using results of Tutte \cite{Tutte}, a first moment argument reveals that the threshold should be at least $\frac{c}{\sqrt n}$ for a specific constant $c$.
In \cite{ParPer} it was proven that if $\{s_n\}_{n \geq 3}$ is an infinite family of Hamiltonian $2$-spheres, and if the degree of each vertex is uniformly bounded in $n$ by some fixed $\Delta$, then for $p=\omega(\frac{\Delta^2}{\sqrt{n}})$, $s_n$ will asymptotically almost surely appear as a spanning subcomplex of the random complex $X \sim X_2(n,p)$.

Let $X \sim X_2(n,p)$. For a set $S$ of triangles, we write $S \subseteq X$ to denote the event that the set of triangles in $X$ contains $S$.
Let $S_n$ be the set of triangulations of a sphere using $n$ labeled vertices, where double edges and loops are not allowed. Equivalently, $S_n$ is the set of $n$-vertex $2$-dimensional simplicial complexes that are homeomorphic to a sphere.

\begin{theorem}
\label{thm:main}
Let $\varepsilon>0$ be an arbitrarily small constant, and let $\gamma = 4^4/3^3$.
\begin{enumerate}
\item\label{it:first}
 If $p<(1-\varepsilon)\left(\frac{e}{\gamma n}\right)^{\frac{1}{2}}$ then the probability that $X \sim X_2(n,p)$ contains an element of $S_n$ is $o(1)$.
\item\label{it:second}
 If $p>(1+\varepsilon)\left(\frac{e}{\gamma n}\right)^{\frac{1}{2}}$ then the probability that $X \sim X_2(n,p)$ contains an element of $S_n$ is $1 - o(1)$.
\end{enumerate}
\end{theorem}

\subsection{The plan of the paper and the proof}
\subsubsection{The main ideas}
Since the proof has several stages, in this subsection we explain the idea behind the proof and why we need each step.

Observe that two random triangulations of the sphere are not likely to intersect, as the number of possible triangles over $n$ vertices is $\binom{n}{3},$ but the number of triangles in a triangulation is $O(n)$, and each triangle is contained by the same number of triangulations. Note that this is different from the one dimensional case (of hamiltonian cycles), and it hints that possibly, unlike that case, in $2$ dimensions the first moment prediction for threshold is true.

The first moment prediction for the threshold follows from Tutte's formula for the number of triangulations, Theorem \ref{thm:Tutte}.
This immediately gives a lower bound for the threshold. We prove that this is also an upper bound by applying the second moment technique.

By associating an indicator random variable for each triangulation and performing standard manipulations, we reduce the required estimate of the second moment method, to the question of estimating
\[\frac{1}{|S_n|^2}\sum_F p^{-|F|} \cdot |\{s,s'\in S_n,~F\subseteq s\cap s'\}|,\]
where $S_n$ is the set of $n$-vertex triangulations of the sphere, and $F$ is a \emph{completable} set of triangles, that is, a set of triangles which may be completed to a triangulation of the sphere.

The next step is to group completable sets $F$ together according to a specific set of parameters that determines their contribution to the sum. In Subsection \ref{subsec1} we define these parameters precisely. Roughly speaking, it includes the boundary of $F$, a planar, but not yet embedded, graph, together with a $2$-coloring of its faces, in which the white faces are those areas of the boundary graph that are covered by triangles in $F$. We also sum over the allocation of interior points in the white faces. The remaining black faces, which depend on the embedding, correspond to the areas of the sphere that are not covered by $F$. We call this new object a completable filled planar graph, or \emph{CFPG}.
In terms of summation over CFPGs, we want to evaluate
\begin{equation}\label{eq:1sketch}
\frac{1}{|S_n|^2} \sum_{H \in \tilde{P}_n, \eta_1,\eta_2 \in \text{Em}(H)}{p^{-T_H} \cdot |M(H)| \cdot N(H,\eta_1) \cdot N(H,\eta_2) }\end{equation}
where $T_H$ is the number of triangles in a completable $F$ whose boundary is $H$, $|M(H)|$ is the number of ways to triangulate the white faces (knowing the point allocation, which is part of the data), $\text{Em}(H)$ is the set of embeddings and $N(H,\eta_i)$ is the number of ways to triangulate the black faces, given the embedding $\eta_i,$ on which we also sum.

In order to explain better the idea of proof, we now simplify our problem by forgetting the summation over embeddings, and assuming implicitly that $H$ is embedded. We shall return to this point later in the sketch.

Let us consider the summation over CFPGs on $m$ vertices. Dividing by $n!$ takes us from summing over labeled graphs to summing over unlabeled graphs. We get an expression of the following form (still omitting the embeddings):
\begin{equation}\label{eq:2sketch}\frac{e^{O(m)}}{|S_n|^2}\frac{n!}{(n-m)!}\sum\left\{\frac{1}{|Aut(G)|}\sum_{k=0}^{n-m}\binom{n-m}{k} p^{-T_{G,k}} M(\bar{G},k) M(G,n-m-k)^2\right\},\end{equation}
where we now sum over graphs which are defined similarly to CFPGs, except that the allocation of the interior points is not determined. We also sum over $k,$ the total number of interior points that will be allocated to the white region. $\bar{G}$ is the same embedded graph, in which the color of the black faces is changed to white and vice versa.

Since there is an exponential (in $m$) number of embedded planar graphs on $m$ vertices, we may replace the sum by maximum, and pay another multiplicative factor of $e^{O(m)}.$ Let us also, for the moment, ignore the term $Aut(G),$ although it will also play a crucial role later.

A key observation for bounding the number of embeddings of a single connected component of $H$ is Lemma \ref{lem:estimation_of_triangulations_of_sphere_with_holes} which estimates the number of triangulations in a topological disk with several holes.

Plugging this estimate into the summand that corresponds to $H$ in \eqref{eq:1sketch} leads to the hope that the contribution of $H$ to \eqref{eq:1sketch} behaves like $c^m n^{-am-b},$ or more precisely, $c^m n^{-m/6},$ where $m$ is the number of vertices of $H.$
This estimate is indeed true for \emph{simple $H$}, but it turns out that complicated graphs, with many connected components and large $m$, may+ behave worse.

Lemma \ref{lem:estimation_of_triangulations_of_sphere_with_holes} hints that a graph $G$ maximizing the summand in \eqref{eq:2sketch} will have a single white and a single black face that touches more than $3$ connected components. Indeed, we prove in Lemma \ref{lem:effect of flattening} that given a graph $G$ ,we may find another graph, $G'$, obtained from $G$ by \emph{flattening}, which is a process of moving holes from the interior of one face to another. The new graph $G'$ will have a single face of each color with many holes, and its corresponding summand in \eqref{eq:2sketch} is greater than the summand corresponding to $G.$
After performing some analysis we bound the contribution of any flat graph $G$ by an expression of the form $(m/n)^{a(G)}m^{b(G)}.$
Moreover, $a,b$ may be calculated in an iterative way, by adding one face after the other. It turns out that the contribution of most faces increases $a$ and decreases $b$ by and amount that is proportional to the size of the added face. However, this is not the case for small faces, such as triangles. This is not a problem in the analysis, but in the bounds themselves.

Now the automorphism group comes to play - when we have many small faces the automorphism group tends to be big. This qualitative statement may be quantized. However, estimating the automorphism group is not an easy task. Instead we give a lower bound by counting how many small faces (of several types, with no more than $6$ vertices) we have, which have no other face bounded by them. With this information we may lower bound the automorphism group, and show that whenever the $a(G),-b(G)$ are not large enough, the autumorphism factor of in the summand of \eqref{eq:2sketch} compensates. Note that the fact we account for the numbers of small faces requires some caution in the flattening process (we do not want to take out the last face surrounded by a small face).

We so far described a proof under the simplifying assumption of neglecting the different embeddings of a given graph. However, this number may be huge.
Intuitively, given a connected component of a planar graph, a vertex whose removal disconnects the graphs to many pieces contributes many embeddings (for example - choose the cyclic order of these faces around the vertex, and get a factorial term), and also pairs of vertices whose removal disconnects the component into many pieces may be responsible for many embeddings (again, by ordering the pieces).
It turns out that the second type, the pairs, is worse for us. More specifically, since we saw above that large components affect the bound for the summand of \eqref{eq:2sketch} very well, it is seen that pairs of vertices $x,y$ with many \emph{bananas} which are quadrangular faces having $x,y$ as opposite vertices and the other two vertices are of degree $2$ give the worse contribution (triangles are also bad, from this point of view, but in our definition of triangulation at most one triangle of each color may lie on each edge).

The way to attack this point is to show how to reduce a graph with many bananas to a graph with a few bananas (this is more or less what we call a \emph{good graph}), and to estimate the difference in contributions along this process. We show that the good graphs are responsible to the dominant part of the contribution to \eqref{eq:2sketch}.

We then estimate how many embeddings good graphs have. We do it by proving a quantitative estimate for the classical theorem of Whitney which claims that a $3-$connected planar graph has a single embedding, up to an orientation inversion (see appendix).

Armed with these estimates, we return to the above scheme (with some care regarding the remaining bananas) and bound \eqref{eq:2sketch} by $\sum _{m=3}^n e^{O(m)}(m/n)^{-m/6}m^{-m/5000}.$ This was the fabula, we now move to the syuzhet.

\subsubsection{The actual structure of the proof}
We start Section \ref{sec:proof} by standard first and second moment arguments, this bounds the threshold from below and gives an estimate for the sum of some functional over special sets of triangles (\emph{completable}) we need to prove, in order to show that the bound for the threshold is indeed the threshold.
Subsection \ref{subsec1} shows that it is enough to estimate over \emph{good graphs}, which are roughly speaking graphs without too many faces which are rhombi (\emph{bananas}) with exactly two vertices, which are opposite, of degree greater than $2.$
In Subsection \ref{subsec2} we give an estimate for the number of embeddings of a graph based on its connectivity properties. Subsection \ref{subsec3} simplifies the sum of the functional which appeared in the second moment argument, replaces the functional by a simpler one, the sum by maximum, and the planar graphs by embedded planar graphs. Then, in Subsection \ref{subsec4} we show that up to irrelevant terms, the maximum of the new functional is obtained from embedded graphs which we call \emph{flat}, whose nesting picture is relatively simple.
We then analyze the functional for such graphs in \ref{subsec5}. The functional of a given graph on $m$ vertices (out of the $n$ vertices we start with) is, up to irrelevant factors, of the form $n^am^b,$ where $a,b$ depend on the graph. By analyzing the contribution of each component of the flat graph, and taking into account automorphisms, we estimate $a,b$ in terms of $m$ and finish the proof (Subsection \ref{subsec6}). In the appendix several technical and non technical lemmas are proven.

\subsection{Acknowledgements}
The authors would like to thank Kirushiga Kanthan and Amitos Solomon for discussions related to this work.

Z.L. and R.T. are supported by Dr. Max R\"ossler, the Walter Haefner Foundation and the ETH Z\"urich
Foundation.

\section{Planar Graphs}\label{sec:planar}

A planar graph is a graph that can be embedded in the plane. An embedded planar graph is a planar graph together with such an embedding, which is  defined up to an isotopy, that is, a continuous transformation that does not create intersections.



We will be interested in graphs that are embedded in the sphere. Such graphs can be thought of as planar graphs, if one chooses a face of the graph to be the outer face, and by stretching it maps the rest of the sphere onto a plane. Different choices for the outer face may however result in different embedded planar graphs. Since a planar graph on $m$ vertices has $O(m)$ faces, the number of embedded $m$-vertex planar graphs and the number of $m$-vertex graphs that are embedded in the sphere differ by at most a multiplicative factor of $O(m)$.

A classical result of Tutte \cite{Tutte} implies that the number of embedded planar graphs on $m$ unlabelled vertices is exponential in $m$.

\begin{lem}\label{lem:num_of_planar_graphs}
The number of embedded planar graphs on $m$ unlabelled vertices is at most
$c_1^m$, for a constant $c_1$.
\end{lem}

Indeed, Tutte gave a formula for the number $t(m)$ of $m$-vertex triangulations of the plane, implying that $t(m)\leq \gamma^{(1+o(1))m}$. An $m$-vertex triangulation has exactly $3m-6$ edges, and as every embedded $m$-vertex planar graph is a subgraph of an $m$-vertex triangulation of the plane, their number is at most $2^{3m-6}\cdot t(m)$.

Let $P_m$ denote the set of pairs $(G,c)$ such that $G$ is a spanning graph on $m$ vertices embedded in the sphere, and $c$ is a $2$-coloring of $G$'s faces in black and white such that every edge bounds exactly one white face and one black face. In what follows we will be interested in graphs in $P_m$, and we will assume implicitly that they come equipped with a $2$-coloring $c$. Note that the existence of such a $2$-coloring implies that the vertex degrees are all even.

Given such a graph $G$, let $r = r(G)$ be the number of its connected components, and let $F_b=F_b(G)$ and $F_w=F_w(G)$ be the sets of its white and black faces respectively. For convenience, we write $W = |F_w|$, $B = |F_b|$ and $F = F_w \cup F_b$. Let $l_f$ denote the number of connected components of $G$ touched by a face $f$.

\begin{prop}\label{prop:planar_estimates}
Let $G \in P_m$. There holds:
\begin{enumerate}
\item\label{it:planar_face_bound}
The number of faces of any color is at most $m-2.$

\item\label{it:conn_comps_bound}
The number of connected components of $G$ is at most $\lfloor\frac{m}{3}\rfloor.$

\item\label{it:it:estimation_of_l_f_sum}
\[
\sum_{f \in F}l_f-1\leq \frac{m}{3}.
\]
\end{enumerate}
\end{prop}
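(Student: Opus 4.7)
All three parts should rest on Euler's formula $m - E + F = 1 + r$ for a graph with $r$ components embedded in the sphere, together with two elementary observations. First, since $G$ is simple, every face has length at least $3$. Second, $G$ has no bridges, since a bridge would border the same face on both sides and contradict the face $2$-coloring. (In particular every vertex degree is even, as noted in the paper, and once I exclude isolated vertices every vertex has degree at least $2$.)

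For part \ref{it:planar_face_bound} I would double-count edge-face incidences within each color class. Since each edge lies on exactly one white and one black face boundary, $E \geq 3W$ and $E \geq 3B$. Substituting $E = m + W + B - 1 - r$ from Euler gives the pair $2W - B \leq m - 1 - r$ and $2B - W \leq m - 1 - r$; doubling the first and adding the second eliminates $B$ and yields $3W \leq 3(m - 1 - r)$, so $W \leq m - 1 - r \leq m - 2$ (the octahedron saturates this bound, which is a useful sanity check). The bound on $B$ is symmetric.

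Part \ref{it:conn_comps_bound} is then essentially immediate: assuming $G$ has no isolated vertices (an implicit feature of the setup), each connected component is a bridgeless simple graph with all even degrees, hence contains a cycle and so has at least $3$ vertices; therefore $r \leq \lfloor m/3 \rfloor$.

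For part \ref{it:it:estimation_of_l_f_sum} my plan is to induct on $r$ by inserting the components of $G$ one at a time into an initially empty sphere. Let $\Sigma(G) := \sum_{f}(l_f - 1)$; for the empty graph (the sphere is the unique face and is touched by no component) $\Sigma = -1$. When a connected component $C$ with $F'$ standalone faces is inserted into some existing face $f_0$, the disk $f_0$ is subdivided into $F'$ new faces, exactly one of which --- the ``outer'' face, whose closure contains $\partial f_0$ --- touches $\partial f_0$. This ``exactly one outer face'' assertion is the main obstacle, and it follows from the connectedness of $C$ together with the fact that $f_0$ is a topological disk. The outer new face has $l$-value $l_{f_0} + 1$ (it inherits the components that touched $f_0$ and additionally touches $C$), while each of the remaining $F' - 1$ inner new faces has $l = 1$ (it touches only $C$). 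Consequently $\sum_f l_f$ grows by $F'$ and $|F|$ grows by $F' - 1$, so $\Sigma$ grows by exactly $1$ at each insertion. After $r$ insertions $\Sigma = r - 1$, which by part \ref{it:conn_comps_bound} is at most $\lfloor m/3 \rfloor - 1 \leq m/3$.
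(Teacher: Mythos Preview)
Your arguments for parts~(a) and~(b) are correct and essentially match the paper's. For part~(c), your inductive insertion approach is different from the paper's (which builds a bipartite incidence graph between faces and components), and your final conclusion $\Sigma = r-1$ is in fact correct. However, the justification has a real gap: the face $f_0$ into which you insert the next component need \emph{not} be a topological disk, and no insertion order can guarantee this. For instance, if $G$ consists of a square $C_1$ together with two disjoint triangles $C_2,C_3$ lying in the same face of $C_1$ and not nested in one another, then in \emph{every} order the last component must be inserted into an annulus or a pair of pants. In such cases there is no single ``outer'' new face inheriting all of $\partial f_0$; the several boundary components of $f_0$ may end up on different new faces. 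You correctly flagged this step as ``the main obstacle'' but then resolved it via a false premise.

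The increment $\Sigma\to\Sigma+1$ is nonetheless true. Your Euler computation that $|F|$ grows by $F'-1$ is fine; what needs a different argument is that $\sum_f l_f$ grows by exactly $F'$. The correct statement is: each of the $l_{f_0}$ old components touching $f_0$ touches exactly one of the new faces (take a small connected collar of its intersection with $\partial f_0$ inside $f_0$, chosen disjoint from the newly inserted $C$), and every new face touches $C$ (a new face disjoint from $\overline{C}$ would be clopen in the connected set $f_0$). This gives $F'+l_{f_0}$ incidences at the new faces replacing the $l_{f_0}$ old ones, hence $\sum_f l_f$ grows by $F'$ as you wanted. The paper avoids this topology altogether: after fixing an outer face, each component is surrounded by exactly one face, so $\sum_f(l_f-1)\le\sum_f d_f = r\le m/3$.
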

\begin{proof}

By Euler's formula $|F|-|E|+|V|=r+1$. In this case $|V|=m$ and $3|F|\leq 2|E|$, since every face is bounded by at least three edges, and every edge belongs to exactly two faces. Thus, $m-\frac{|E|}{3}\geq r+1$. Note that $3 W,3 B\leq |E|$, since each face is bounded by at least three edges, and every edge belongs to exactly one white and one black face.

For the second part, observe that all of the degrees in $G$ must be even. Since there are no isolated vertices, it follows that any connected component of $G$ is made of at least $3$ vertices, and so the number of connected components is at most $\lfloor\frac{m}{3}\rfloor.$

For the last part, consider the bipartite graph $H=(V_0\cup V_1,E_H),$ defined as follows. $V_0$ is the set of faces of $G$, $V_1$ is the set of connected components of $G$. To define the edges, choose a face of $G$ to be the outer face, stretching the sphere into a plane. Now the notion of a face surrounding a connected component of $G$ is well defined, and we place an edge between a face $f$ and every connected component that it surrounds. Let $d_f$ be the degree of $f$ in $H$, and note that $d_f=l_f-1$ for every face except the outer face, $f_0$, for which $d_{f_0}=l_{f_0}$. The degrees of the vertices in $V_1$ are all one. By part \ref{it:conn_comps_bound}, $|V_1|\leq\frac{m}{3}.$ Thus,
\[\sum_{f \in F}l_f-1 \leq \sum_{f \in F} d_f=|V_1|\leq\frac{m}{3}.\]
\end{proof}

The following lemma, due to McKay \cite{McKay}, is useful in simplifying some of our arguments.

\begin{lem} \label{lem:mckay}
Let $z_1, ... ,z_n \geq 0$, and let $G$ be a planar graph on $n$ vertices.

The sum $L(G):=\sum_{\{i,j\} \in E(G)}{\min\{z_i,z_j\}}$ is at most $3 \cdot\sum_{i}{z_i}$.
\end{lem}
We give the proof of \cite{McKay}.
\begin{proof}
Assume without loss of generality that $z_1 \geq ... \geq z_n$.
We will show that for any planar graph $G$,
\begin{equation} \label{eqn:planar}
L(G) \leq z_2 + 2 z_3 + 3 z_4 + 3 z_5 + ... + 3 z_n.
\end{equation}
We can write $L(G) = \sum_{i=1}^n{a_i z_i}$, where $a_i$ is the number of edges $\{i,j\}$ such that $i > j$. Note that $\sum_{i=1}^k{a_i}$ is the number of edges in the induced graph on the first $k$ vertices, and therefore by the planarity of $G$, $\sum_{i=1}^k{a_i} \leq 3k-6,$ when $k\geq 3.$ For $k=2$ this sum is bounded by $1$ and for $k=1$ it is $0.$

Let $b_1=0$, $b_2=1$, $b_3=2$, and $b_4 = ... = b_n = 3$, and note that $\{b_i\}$ is the unique sequence such that $\sum_{i=1}^k{b_i} = 3k-6$ for every $k\geq3,$ and in general it is the tight upper bound on the number of edges of a connected planar graph on $k$ vertices. Thus, for every $k~\sum_{i=1}^k{a_i} \leq\sum_{i=1}^k{b_i}$. We would like to show that
\[L(G)\leq \sum_{i=1}^n b_iz_i.\]
We show it by performing the following series of steps. For every $i$ such that $a_i>b_i$, there must be some $j<i$ such that $a_j<b_j$, and so subtracting $1$ from $a_i$ and adding it to $a_j$ will increase the value of $L(G)$. After a finite number of such steps, we will have $a_i \leq b_i$ for all $i$ and therefore $\sum_{i=1}^n{a_i z_i} \leq\sum_{i=1}^n{b_i z_i} $.
\end{proof}

Let $G$ be a connected graph in $P_m$, and choose a face $f \in F(G)$ to be the outer face. We are interested in the maximal value of $B-W$, not counting the outer face.

We consider two cases, depending on whether $f$ is white or black.

\begin{lem}
\label{lem:W-B}
\begin{itemize}
\item If $f$ is white, $B-W \leq \lfloor \frac{m-1}{2}\rfloor$.
\item If $f$ is black, $B-W \leq \lfloor \frac{m-5}{2}\rfloor$.
\end{itemize}
\end{lem}

\begin{proof}
We say that $G$ is a counterexample if it violates the inequality in the first item.
Consider the case that $W=0$, that is, there are no white faces apart from the outer face.
Assume that there exists such a counterexample, and let $G$ be such a graph in which the number of black faces is minimal.

Let $\tilde{G}$ be the graph whose vertices are the (black) faces of $G$, and in which two vertices are connected if the corresponding faces share a vertex. Note that two faces cannot share two vertices, because that would create a white face. Note further that $\tilde{G}$ is connected if and only if $G$ is connected, and has no cycles, as these would imply the existence of a white face in $G$. Therefore, $\tilde{G}$ is a tree. Let $f$ be the face corresponding to a leaf of the tree $\tilde{G}$. We obtain a new graph $G'$ from $G$ by deleting $f$ from $G$. We observe that $m' := V(G') \leq m-2$, and that the number $B'$ of black faces in $G'$ is equal to $B-1$. We have:
\[
B' = B-1 > \lfloor \frac{m-1}{2}\rfloor - 1 = \lfloor \frac{m'-1}{2}\rfloor,
\]
contradicting the minimality of $G$.

Assume now that $W>0$. Let $G$ be a counterexample in which the number of white faces is minimal. Let $w$ be a white face, and let $x=a_0,a_1, ... ,a_p$ be the vertices of $w$ in a counterclockwise ordering from the point of view of $w$. Let $w=w_0,b_0,w_1,b_1,...,w_l,b_l$ be the faces adjoining $x$, in a counterclockwise ordering from the point of view of $x$, where one of the white faces $w_i$ may be the outer face.
Our aim is to split $x$ into two vertices so that the resulting graph has one more vertex and one less white face than $G$.

Let $e_0$ and $e_1$ be the edges of $x$ that bound $w$, and let $e_2, ... ,e_{2l+1}$ be the remaining edges of $x$, in a counterclockwise ordering, so that $e_{2i},e_{2i+1}$ bound the face $w_{i}$ and $e_{2i+1},e_{2i+2}$ bound the face $b_{i}$. We split $x$ into two vertices $x_1, x_2$, where the edges $e_1,e_2$ are connected to $x_2$ and the remaining edges are connected to $x_1$, and we push $x_1,x_2$ slightly apart so that $w$ and $w_1$ are merged into one white area. Let $G'$ denote the resulting planar graph.

\begin{figure}
 \centering
 \includegraphics[width=145mm,height=55mm]{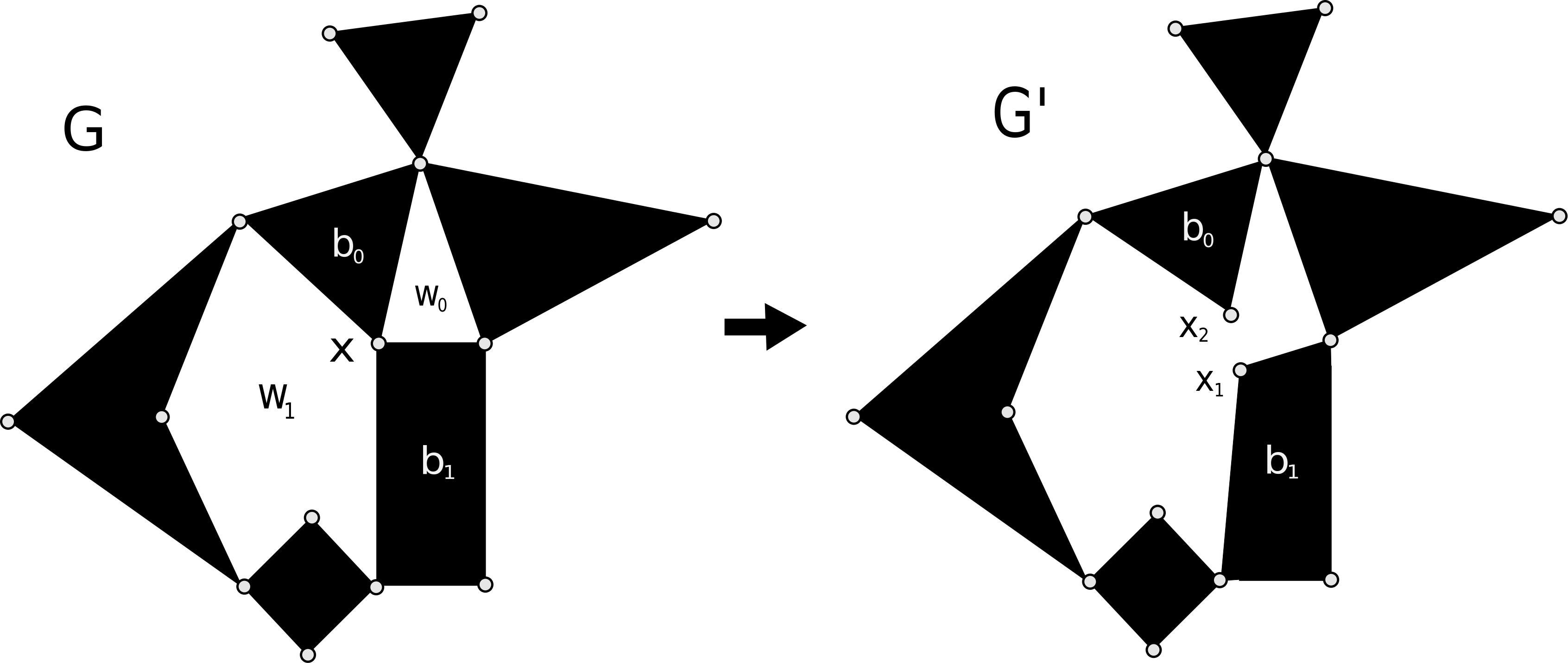}
 \caption{After the vertex $x$ is split into two vertices, $G'$ has one vertex more and one white face less than $G$.}
\end{figure}

Clearly, $G'$ has one vertex more and one white region less than $G$. Note that $G'$ is connected, as $x_1$ is connected to $x_2$ by the path $x_1, a_1,...,a_p,x_2$. Therefore, if $B',W',m'$ are the number of black faces, the number of white faces and the number of vertices in $G'$ respectively, then we have:
\[
B'-W' = B-W+1 > \lfloor \frac{m-1}{2} \rfloor + 1 \geq \lfloor \frac{m'-1}{2} \rfloor,
\]
which is a contradiction to the minimality of $G$.

For the second item of the lemma, note that we could have stated the first item in a more symmetrical way by considering a mapping of $G$ onto the sphere $S_2$. In this case, the outer face would be mapped to some face of the sphere, and we would have $B-W \leq \lfloor \frac{m-3}{2}\rfloor$, where here $W$ counts the white faces of $G$ including the outer face.

If the outer face is colored black, we can again consider a mapping of $G$ onto the sphere and obtain $B-W\leq \lfloor \frac{m-3}{2}\rfloor$ from the first part. Therefore, if we do not count the outer face, we get $B-W\leq \lfloor \frac{m-5}{2}\rfloor$ as desired.
\end{proof}

Let $G$ be a graph embedded in the sphere, and let $C_1,C_2,C_3$ be three different connected components of $G$. We say that $C_2$ \emph{separates} $C_1$ from $C_3$ if there is no continuous path from $C_1$ to $C_3$ on the sphere that does not intersect $C_2$. By a slight abuse of notation, we will sometimes say that two faces $f_1,f_2$ are separated by a connected component $C$ if there is no path from $f_1$ to $f_2$
 that does not intersect $C$.

Another concept that we will need is that of an automorphism of an embedded planar graph. The permutation group $\mathbb{S}_m$ acts on the set of labeled embedded planar graphs on $m$ vertices by relabeling their vertices. We define $Aut(G)$ to be the set of permutations $\pi \in \mathbb{S}_m$ such that $\pi.G$ is equivalent to $G$ via an isotopy.

\section{Triangulations} \label{sec:triangulations}

A triangulation of a surface is a graph embedded in the surface such that no two edges intersect and all of the faces are triangles. We are interested in triangulations in which the graph is simple. Sometimes we identify a triangulation and its set of triangles.
\begin{nn}
Let $T_{k,m_1,m_2,\ldots, m_l}$ be the number of triangulations of a surface of genus $0$ with $l$ boundary components of lengths $m_1,\ldots, m_l,~k$ internal labeled points, with no loops or double edges .
\end{nn}

\begin{prop}\label{prop:num_of_triangles}
\begin{enumerate}
\item\label{it:num_in_sphere}
A triangulation of a sphere on $n$ vertices has $2n-4$ triangles.
\item\label{it:num_in_disc_with_holes}
A triangulation of a sphere which has $l$ holes of lengths $m_1,\ldots,m_l$ and $k$ internal vertices has $2k+\sum_{i=1}^l m_i+2l-4$ triangles.
\end{enumerate}
\end{prop}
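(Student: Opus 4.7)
The plan is to derive both statements directly from Euler's formula together with an edge-face counting relation.

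For part \ref{it:num_in_sphere}, I would apply Euler's formula $V-E+F=2$ to the triangulation with $V=n$ and $F$ equal to the number of triangles (there is no outer face to subtract since we are already on the sphere). Every face is a triangle and every edge bounds exactly two faces, so $2E=3F$. Substituting $E=3F/2$ into Euler's formula gives $n-F/2=2$, hence $F=2n-4$.

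For part \ref{it:num_in_disc_with_holes}, my plan is to reduce to the closed sphere case by capping each boundary component with a single polygonal face. Let $T$ denote the number of triangles in the original triangulated surface-with-boundary. After capping, we obtain a CW-decomposition of the sphere with
\[
V = k + \sum_{i=1}^l m_i, \qquad F = T + l,
\]
and the face-edge incidence count becomes $2E = 3T + \sum_{i=1}^l m_i$ (each triangle contributes $3$, each cap of length $m_i$ contributes $m_i$, and every edge is shared by two faces). Plugging these into $V-E+F=2$ and clearing denominators yields
\[
2k + 2\sum_{i=1}^l m_i - \Bigl(3T+\sum_{i=1}^l m_i\Bigr) + 2T + 2l = 4,
\]
which simplifies to $T = 2k + \sum_{i=1}^l m_i + 2l - 4$, as desired.

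There is no real obstacle here; the only thing to be mildly careful about is the edge-counting step in part \ref{it:num_in_disc_with_holes}, where one must remember that the boundary edges become ordinary interior edges of the capped sphere and thus are shared by exactly one triangle and one polygonal cap, so the relation $2E = 3T + \sum m_i$ holds without correction.
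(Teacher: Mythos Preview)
Your argument is correct. Part \ref{it:num_in_sphere} is identical to the paper's proof. For part \ref{it:num_in_disc_with_holes} you take a slightly different (but equally elementary) route: the paper triangulates each hole with $m_i-2$ triangles, applies part \ref{it:num_in_sphere} to the resulting sphere triangulation on $k+\sum m_i$ vertices, and then subtracts the $\sum m_i - 2l$ added triangles; you instead cap each hole with a single $m_i$-gon and apply Euler's formula directly to the resulting CW-complex. Both approaches amount to closing up the surface and invoking Euler, and neither offers any real advantage over the other.
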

\begin{proof}
By Euler's formula, a graph embedded in a sphere satisfies $f-e+v=2,$ where $f,e,v$ are the numbers of faces, edges and vertices respectively.
For a triangulation on $n$ vertices, $v=n,~e=\frac{3}{2}f,$ thus $\frac{f}{2}=n-2$ and the first part follows.
For the second part, note that the $i$'th hole is a polygon with $m_i$ vertices, and so it can be triangulated using $m_i - 2$ triangles. If we then triangulate the remainder, the result is a triangulation of a sphere on $k+\sum_{i=1}^l m_i$ vertices, which by the first part uses $2(k+\sum_{i=1}^l m_i) - 4$ triangles. After subtracting the $\sum m_i - 2l$ that were added to fill the holes, the second part follows.
\end{proof}

The following theorem is based on techniques of Tutte \cite{Tutte}. See also \cite{GJ}, section 2.9.3.
\begin{theorem}\label{thm:Tutte}
The number $T_{k,m}$ of triangulations of a polygon with $m$ boundary vertices and $k$ labeled internal vertices is
\[\frac{2(2m-3)!(2m+4k-5)!}{(m-1)!(m-3)!(2m+3k-3)!}.\]
The number of triangulations of a sphere on $n$ labeled vertices, $|S_n|$, satisfies
\[
\lim_{n\to\infty}\frac{|S_n|}{n!\gamma^nn^{-7/2}}\to z,
\]
where $z$ is some constant.
\end{theorem}
The second part follows from the first one by noting that $(2n-4)|S_n|$, the number of sphere triangulations with a distinguished triangle, is $2\binom{n}{3}T_{n-3,3}.$ Now, up to a constant factor by the first part of the theorem, $T_{n-3,3}$ is
\[\frac{(6+4(n-3)-5)!}{(6+3(n-3)-3)!}\simeq n^{-5}n!\binom{4n}{3n}\simeq n^{-\frac{11}{2}} \gamma^n n!,\]
where $\simeq$ stands for asymptotic equality up to a multiplicative constant. The result for $|S_n|$ now follows.

\begin{lem}\label{lem:estimation_of_triangulations_of_sphere_with_holes}
Let $m_1,\ldots,m_l$ be positive integers whose sum is $M$, and let $k$ be a non negative integer. Then
\[\
\frac{T_{k,m_1,\ldots,m_l}}{k!\gamma^k}\leq c_2 ^M(k+M)^{l-\frac{7}{2}},
\]
for some constant $c_2$.
\end{lem}
\begin{proof}
Consider all triangulations of a polygon with $m_1$ boundary points, and \[K=k+\sum_{i=2}m_i +l-1\] labeled internal points.
On the one hand, the number of such triangulations is
\[T_{K,m} = \frac{2(2m_1-3)!(2m_1+4K-5)!}{(m_1-1)!(m_1-3)!(2m_1+3K-3)!}\]
by Theorem \ref{thm:Tutte}.
On the other hand, one can construct
\[\frac{(k+\sum_{i=2}m_i)(k+\sum_{i=2}^lm_i-1)\cdot (k+1)}{\prod_{i=2}^l m_i} T_{k,m_1,\ldots,m_l}\] different triangulations in the following way:
Let $p_1,\ldots, p_{l-1}$ be the last $l-1$ points. For $p_i$ choose $m_i$ neighbors from the remaining $k+\sum_{i\geq 2}m_i$ points, and order them cyclically. For every $2 \leq i\leq l$ put triangles between $p_i$ and every pair of consecutive neighbors of $p_i$. Then complete the triangulation to a whole polygon triangulation. There are $T_{k,m_1,\ldots,m_l}$ ways to perform the last step, and $\frac{(k+\sum_{i=2}m_i)(k+\sum_{i=2}m_i-1)\cdots (k+1)}{\prod_{i=2}^l m_i}$ to do the former.
We have
\[\frac{(k+\sum_{i=2}m_i)(k+\sum_{i=2}m_i-1)\cdot (k+1)}{\prod_{i=2}^l m_i} T_{k,m_1,\ldots,m_l}\leq\frac{2(2m_1-3)!(2m_1+4K-5)!}{(m_1-1)!(m_1-3)!(2m_1+3K-3)!},\] or equivalently,
\[ \frac{T_{k,m_1,\ldots,m_l}}{k!}\leq\frac{2(\prod_{i=2}^l m_i)(2m_1-3)!(2m_1+4K-5)!}{(k+\sum_{i=2}m_i)!(m_1-1)!(m_1-3)!(2m_1+3K-3)!}.\]
Write $M=\sum_{i=1}^l m_i.$
By standard estimations, the right hand side is no more than \[c^M\frac{{(2m_1+4(k+\sum_{i\geq 2}m_i))}^{{2m_1+4(k+\sum_{i\geq 2}m_i)}}}{(2m_1+3(k+\sum_{i\geq 2}m_i))^{2m_1+3(k+\sum_{i\geq 2}m_i)}(k+\sum_{i\geq 2}m_i)^{k+\sum_{i\geq 2}m_i}}(k+M)^{l-\frac{7}{2}},\]
for some constant $c$.
Consider now \[\frac{{(2m_1+4(k+\sum_{i\geq 2}m_i))}^{{2m_1+4(k+\sum_{i\geq 2}m_i)}}}{(2m_1+3(k+\sum_{i\geq 2}m_i))^{(2m_1+3(k+\sum_{i\geq 2}m_i))}(k+\sum_{i\geq 2}m_i)^{k+\sum_{i\geq 2}m_i}}\gamma^{-k}.\] It can be written as
\[\mu\left(\frac{{(2m_1+4(k+\sum_{i\geq 2}m_i))}}{4(k+\sum_{i\geq 2}m_i)}\right)^{k+\sum_{i\geq 2}m_i}
\left(\frac{3(2m_1+4(k+\sum_{i\geq 2}m_i))}{4(2m_1+3(k+\sum_{i\geq 2}m_i))}\right)^{2m_1+3(k+\sum_{i\geq 2}m_i)},\]
where \[\mu=\frac{4^{(2m_1+4(\sum_{i\geq 2}m_i))}}{3^{(2m_1+3(\sum_{i\geq 2}m_i))}}< 4^{4M}.\]
Let $x=k+\sum_{i\geq 2}m_i$. We would like to show that
\[\left(1+\frac{m_1}{2x}\right)^{x} \left(1-\frac{m_1}{4m_1+6x}\right)^{2m_1+3x}\leq c'^{m_1}.\]
This is equivalent to
\[f(\alpha)=(1+\alpha/2)^{\frac{1}{\alpha}} \cdot \left(1-\frac{1}{4\alpha+6}\right)^{2+3/\alpha}\leq c',~\alpha = m_1/x.\]
$f(\alpha)\geq 0$ for $\alpha\geq0,$ tends to $0$ when $\alpha\to 0,$ and to $1$ when $\alpha\to\infty,$ and thus it is bounded by some constant $c'$.

Collecting all of the above, we get
\[\frac{T_{k,m_1,\ldots,m_l}}{k!\gamma^k}\leq c_2^M(k+M)^{l-\frac{7}{2}},\] for some constant $c_2,$
as claimed.
\end{proof}

\section{Proof of Theorem \ref{thm:main}}\label{sec:proof}
The proof of Item \ref{it:first} of Theorem \ref{thm:main} is based on a simple first moment argument. Let $T$ denote the number of triangulations contained in $X$. Then $T = \sum_{s \in S_n}{T_s}$, where $T_s$ is the indicator random variable of the event that $s \subseteq X$. Therefore
\[
\mathbb{E}[T] = |S_n|\cdot p^{2n-4} = O\left(n! \gamma^n n^{-\frac{7}{2}} \cdot (1-\varepsilon)^{2n-4}\left(\frac{e}{\gamma n}\right)^{n-2}\right) = o(1).
\]
Markov's inequality now implies that $\Pr(T \geq 1) = o(1)$.

The main content of this paper is the proof of the Item \ref{it:second} of Theorem \ref{thm:main}. Our proof is based on the second moment method. For convenience we write
\[p=(1-\varepsilon)^{-1}\left(\frac{e}{\gamma n}\right)^{\frac{1}{2}}.\]
By Chebyshev's inequality, it suffices to show that
\[
\mathbb{E}[T^2] = (1+o(1)) \mathbb{E}[T]^2.
\]
Now,
\[
\mathbb{E}[T^2] = \sum_{s,s' \in S_n}{\Pr(s \subseteq X \text{ and } s' \subseteq X)} = p^{4n-8} \sum_{s,s' \in S_n}{p^{-|s \cap s'|}}.
\]
On the other hand, $\mathbb{E}[T]^2 = |S_n|^2 p^{4n-8}$ and so it suffices to prove that
\begin{equation}\label{eq:to_prove}\frac{1}{|S_n|^2}\sum_{s,s' \in S_n}{p^{-|s \cap s'|}} = 1+o(1).\end{equation}

The next step is to change the order of summation. We call a collection of triangles $F$ \textit{completable} if there is a triangulation in $S_n$ containing $F$, and we sum over all completable $F$'s:
\[
\frac{1}{|S_n|^2}\sum_{s,s' \in S_n}{p^{-{|s \cap s'|}}} = \frac{1}{|S_n|^2}\sum_{F}{p^{-|F|}\cdot|\{(s,s'):F = s \cap s'\}|}.
\]
The following lemma is reminiscent of Lemma 9 from \cite{Rior}. It states that we can pay a small penalty and make the weaker assumption that $s\cap s'$ contains $F$, rather than $F = s \cap s'$.
\begin{lem}
\begin{align*}\sum_{F}{p^{-|F|}|\{(s,s'):F = s \cap s'\}|} \\
= \sum_{F}{\left(\frac{1}{p}-1\right)^{|F|}|\{(s,s'):F \subseteq s \cap s'\}|}.
\end{align*}
\end{lem}
\begin{proof}
We change the order of the summation.
\begin{align*}
\sum_{F}{\left(\frac{1}{p}-1\right)^{|F|}|\{(s,s'):F \subseteq s \cap s'\}|} \\
= \sum_{(s,s')}\sum_{F:F \subseteq s \cap s'} {\left(\frac{1}{p}-1\right)^{|F|}} \\
=\sum_{(s,s')}\sum_{i=0}^{|s \cap s'|}{\binom{|s \cap s'|}{i}}\left(\frac{1}{p}-1\right)^i \\
= \sum_{(s,s')}\left(\frac{1}{p}\right)^{|s \cap s'|} \\
= \sum_{F}{p^{-|F|}|\{(s,s'):F = s \cap s'\}|}.
\end{align*}
\end{proof}

In our case $p = o(1)$, and so the ratio between $\left(\frac{1}{p}\right)$ and $\left(\frac{1}{p} - 1\right)$ is $(1+o(1))$. Therefore by changing $\varepsilon$ slightly we can compensate. That is, it suffices to show that for any $\varepsilon>0$ we have
\begin{equation*}
\frac{1}{|S_n|^2}\sum_{F}{p^{-|F|}|\{(s,s'):F \subseteq s \cap s'\}|} = 1 + o(1),
\end{equation*}
where the sum is taken over all completable $F$'s.

Next, we deal with the extremal cases in which $F$ is either the empty set or is a complete triangulation of the sphere. Clearly, the summand that corresponds to $F=\phi$ is one. On the other hand, the summand that corresponds to $F \in S_n$ is $o(1)$ because
\[
\sum_{s \in S_n}{p^{-(2n-4)}} = |S_n|^2 \cdot \left(|S_n| p^{2n-4}\right)^{-1} = o(1) \cdot |S_n|^2.
\]

Thus, it suffices to show that
\begin{equation}\label{eq:intermediate}
\frac{1}{|S_n|^2}\sum_{F}{p^{-|F|}|\{(s,s'):F \subseteq s \cap s'\}|} = o(1),
\end{equation}
where the sum is taken over all nonempty completable sets $F$ that are not in $S_n$.

\subsection{Step one: Reduction to good graphs}\label{subsec1}
Any completable $F$ can be embedded into the sphere. Once we fix such an embedding, one can complete $F$ to a triangulation in $S_n$ by filling in the uncovered areas of the sphere with triangles. However, some care is needed, as there may be many different, nonequivalent embeddings of $F$.

We say that two embeddings $\eta_1, \eta_2$ of $F$ into the sphere are equivalent if $\eta_2$ is obtained from the composition of $\eta_1$ with an isotopy.

Let $\text{Em}(F)$ denote the set of embeddings of $F$ into the sphere up to isotopy, 
and let $\pi(F) = |\text{Em}(F)|$ be the number of such embeddings. Any $s \in S_n$ such that $F \subseteq s$ induces a unique embedding of $F$ into the sphere, which we denote $s(F)$.
We have
\begin{align*}
&\frac{1}{|S_n|^2}\sum_{F} p^{-|F|}\{(s,s'):F \subseteq s \cap s'\}|  \\
& = \frac{1}{|S_n|^2}\sum_{F} p^{-|F|} \sum_{\eta_1,\eta_2 \in \text{Em}(F)}{|\{s: F \subseteq s, s(F) = \eta_1\}| \cdot|\{s: F \subseteq s, s(F) = \eta_2 \}| }.
\end{align*}

We wish to pinpoint a small set of properties of $F$ that determines the value of a summand in the above sum.

We define the \textit{boundary} $\partial F$ of $F$ to be the set of edges contained in a single triangle of $F$, and we say that a vertex is an interior vertex of $F$ if it is contained in a triangle of $F$, but not in a boundary edge. The boundary of $F$ is a planar graph, and any embedding of $F$ into the sphere induces a face structure on $\partial F$, together with a 2-coloring of its faces: color a face white if it is covered by triangles in $F$ and black otherwise.

Define the \emph{edge-connected} components of a completable $F$ to be the maximal subsets $C \subset F$ such that for any two triangles $t,t' \in C$ there is a sequence $t=t_0, ... ,t_l=t'$ with every two consecutive triangles sharing an edge. Note that in any embedding of $F$ into the sphere, the white faces of $\partial F$ correspond to edge-connected components of $F$. Thus, $F$ determines part of the face structure of the embedded graph $\partial F$: it determines the structure of the white faces. It also determines the number of interior points in each white face.

Define the \emph{vertex-connected} components of $F$ to be the maximal subsets $C \subset F$ such that for any two triangles $t,t' \in C$ there is a sequence $t=t_0, ... ,t_l=t'$ with every two consecutive triangles sharing a vertex. Let $C_W(F)$ denote the set of vertex-connected components. We will sometimes just call these the \emph{connected components} of $F$.

The properties of $F$ that we consider are the planar graph $\partial F$, the boundary of its white faces, and the allocation of interior points for its white faces. We call such a triple a \emph{completable filled planar graph (CFPG)} and denote the set of CFPGs whose total number of points is at most $n$ by $\tilde{P}_n$.  Note that the number of ways to complete an embedded completable $F$ to a triangulation $s \in S_n$ depends only on the CFPG of $F$.
We denote the set of completable $F$'s with a given CFPG $H$ by $M(H)$. Note that every $F \in M(H)$ has the same number $T_H$ of triangles, and the same number $|C_W|$ of connected components. We define $C_W(H):=C_W(G)$ for some $G \in M(H)$. Equivalently, $C_W(H)$ is the set of connected components defined by the white faces of $H$. We sometimes call these the white components of $H$.

Let $\text{Em}(H)$ denote the set of embeddings of a CFPG $H$ into the sphere up to isotopy.
~Now, since an embedding $\eta \in \text{Em}(H)$ determines uniquely an embedding for any $F \in M(H)$ that agrees with $\eta$, the number of ways to complete an embedding of $F$ into a triangulation of the sphere depends only on $H$ and $\eta$. This is the number of ways to allocate interior vertices to the black faces, and to triangulate them. We denote this number by $N(H,\eta)$.

We have
\begin{align*}
&\frac{1}{|S_n|^2}\sum_{F} p^{-|F|} \sum_{\eta_1,\eta_2 \in \text{Em}(F)}{|\{s: F \subseteq s, s(F) = \eta_1\}| \cdot|\{s: F \subseteq s, s(F) = \eta_2 \}| } \\
& = \frac{1}{|S_n|^2} \sum_{H \in \tilde{P}_n, \eta_1,\eta_2 \in \text{Em}(H)}{p^{-T_H} \cdot |M(H)| \cdot N(H,\eta_1) \cdot N(H,\eta_2) }.
\end{align*}

We denote the contribution of a given summand by
\[
f(H,\eta_1,\eta_2) := p^{-T_H} \cdot |M(H)| \cdot N(H,\eta_1) \cdot N(H,\eta_2).
\]
To show that this sum is $o(1)$, we will first reduce it to a sum over a smaller set of graphs, which will be called \emph{good graphs}, by showing that the dominant contribution to the sum comes from these graphs. We then carefully analyze the contribution of these good graphs.

A \emph{banana} is an embedded face $B$ of a planar graph with the following properties: $B$ has a single boundary component which is a $4$-gon, with vertices $v_1,\ldots,v_4,$ in this cyclic order, such that $v_2,v_4$ have edges only to $v_1,v_3,$ while $v_1,v_3$ are also connected in the graph obtained by erasing the edges of $B$. We call the pair $\{v_1,v_3\}$ the \emph{special pair} of the banana.
A configuration with many bananas may be tricky to handle, as it may have many embeddings.

As a motivating example, consider the family of CFPGs consisting of $2$ vertices $x_0,x_n$, together with bananas $B_1, ... ,B_{(n/2) - 1}$, such that the boundary of $B_i$ is $x_0,x_{2i-1},x_n,x_{2i}$. The number of ways to embed such a CFPG into the sphere is quite large: It is $(n/2-2)!2^{n/2}$, because it is equivalent to choosing a cyclic ordering of the bananas, as well as an orientation for each banana.
The goal of this section is to deal with this complication, by showing that graphs without too many bananas give the dominant contribution to the sum we want to estimate.

\begin{figure}
 \centering
 \includegraphics[width=145mm,height=55mm]{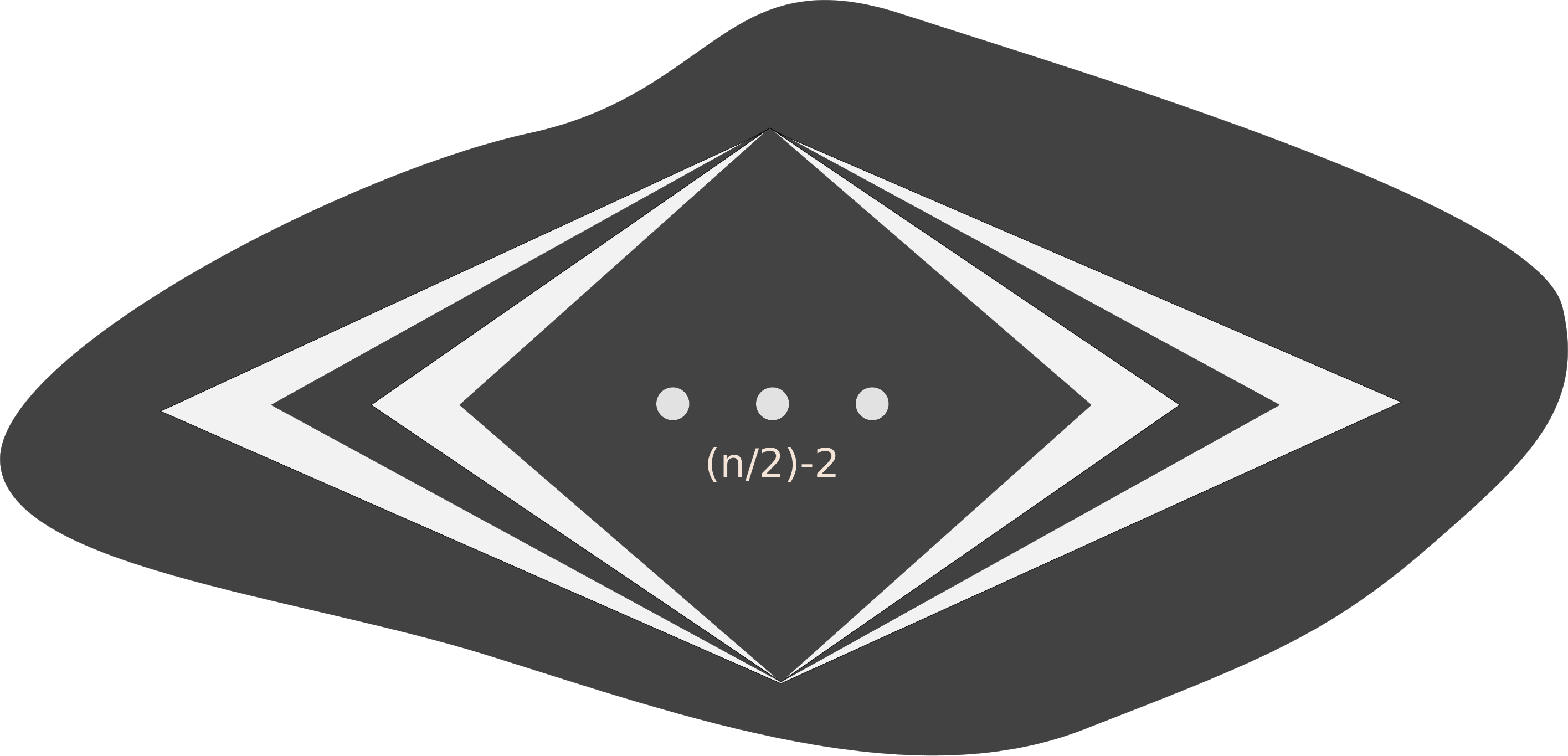}
 \caption{The contribution of this graph to the sum is only exponential in $n$, the number of vertices. A ``good" graph is one without too many of these white bananas. }
\end{figure}

The first step towards achieving this goal is to reduce to the case there is at most one black and at most one white banana with interior points.


Since the vertices are labeled, we can choose a fixed ordering of the set of all possible squares. We denote the minimal white banana in $H$ by $w(H),$ and, given an embedding $\eta \in \text{Em}(H),$ we denote the minimal black banana by $b(H,\eta).$

A CFPG $H$ is \emph{almost good} if all of the white bananas, except possibly $w(H)$, have no interior points. 

A \emph{good triangulation} of an embedded planar graph is a triangulation where in all bananas without internal points, the non special pair of vertices is connected by an edge. For an almost good CFPG $H$, let $M'(H)$ be the set of good triangulations of $H$'s white faces, and let $N'(H,\eta)$ be the number of good triangulations of the black faces in which all of the black bananas except for $b(H,\eta)$ have no interior points. Finally, for an almost good CFPG $H$ and embeddings $\eta_1,\eta_2,$ write,
\[
f'(H,\eta_1,\eta_2) := p^{-T_H} \cdot |M'(H)| \cdot N'(H,\eta_1) \cdot N'(H,\eta_2).
\]
Clearly $f'\leq f.$

For a given almost good CFPG $H$ and $\eta_1,\eta_2 \in Em(H)$, let $\tilde{r} = \tilde{r}(H,\eta_1,\eta_2)$ be the maximum between the number of white and black bananas over the two embeddings. Clearly, $\tilde{r}$ is independent of the interior point allocation.
Let $R(H)$ denote the set of CFPGs that can be obtained from $H$ by redistributing the interior points of $w(H)$ among all of the white bananas of $H$.

\begin{lem}\label{lem:almost_good}
Let $H$ be an almost good CFPG, all of whose white bananas have no interior points except possibly for the square $w(H)$.  Then
\[
\sum_{H' \in R(H),\eta'_1,\eta'_2 \in \text{Em}(H')}{ f(H',\eta'_1,\eta'_2)} \leq \sum_{\eta_1,\eta_2 \in \text{Em}(H)}{ 8^{3\tilde{r}(H,\eta_1,\eta_2)} \cdot f'(H,\eta_1,\eta_2)}.
\]
\end{lem}

\begin{proof}
The lemma is a consequence of the following proposition, whose proof can be found in the appendix.

\begin{prop}\label{prop:moving_all_points_of_bananas}
The number of ways to triangulate $r$ different $4-$gons using $k$ internal points is no more than $8^r$ times the number of ways to triangulate a single $4-$gon using $k$ internal points.
\end{prop}

Indeed, this proposition implies that $\sum_{H' \in R(H)} |M(H')| \leq 8^{\tilde{r}} |M'(H)|$, and that for any embedding $\eta$ we have
$N(H,\eta) \leq 8^{\tilde{r}} N'(H,\eta)$.

Therefore,
\begin{align*}
& \sum_{H' \in R(H),\eta_1,\eta_2 \in \text{Em}(H')}{ p^{-T_{H'}} \cdot |M(H')| N(H',\eta_1) N(H',\eta_2)} \leq \\
& p^{-T_{H}} \cdot
\left(\sum_{H' \in R(H)}{ |M(H')|}\right) \cdot
\left( \sum_{\eta_1,\eta_2 \in \text{Em}(H)}{ N(H,\eta_1) N(H,\eta_2) } \right) \leq \\
& 8^{3 \tilde{r}} \cdot \sum_{\eta_1,\eta_2 \in \text{Em}(H)} p^{-T_{H}} \cdot |M'(H)| N'(H,\eta_1) N'(H,\eta_2).
\end{align*}

\end{proof}

We now wish to define \emph{good} CFPGs and an erasure process from an almost good CFPG and a pair of embeddings to a good CPFG with a pair of embeddings.

Let $H$ be an embedded CFPG, and let $x$ be a vertex of $H$.
A \textit{local face} of $H$ at $x$ is a connected region of the intersection of a face of $x$ with a small punctured neighborhood at $x$. Note that a single face may have several local faces at $x$.
A local face may be white or black depending on the color of the corresponding face.

Let $w_1, ... ,w_l$ be the local white faces of $H$ at $x$, and let $e_0,...,e_{2l-1}$ be $x$'s edges, ordered so that $e_{2i},e_{2i+1}$ belong to the boundary of $w_i$.

We define the \emph{trimming} of $H$ at $x$ to be the CFPG obtained by splitting $x$ into $l$ vertices $x_1, ... ,x_l$ and moving them slightly apart, so that the only local white face touched by $x_i$ is $w_i$.

We denote the trimming at a set of vertices $A$ by $H_A$, and when $A$ is either a singleton $\{x\}$ or a pair $\{x,y\}$ we write $H_x,H_{x,y}$ respectively.

A \emph{leaf} of $x$ is a connected component of $H_x$ which is not a connected component of $H,$ and a \emph{branch} between $x,y$ is a connected component of $H_{x,y}$ which is not a connected component of $H$ or a leaf at $x$ or $y.$

Let $\tilde{L}(x)$ be the number of leaves of $x,$ and $\tilde{L}(x,y)$ be the number of branches of $x,y$. We call all of the white bananas between $x,y$, except for the minimal banana between them (in the ordering), \textit{extra bananas}.

Denote by $\lozenge(H)$ the total number of extra bananas in $H$.
We will say that a CFPG $H$ on $m$ vertices is \emph{good} if it is almost good, and
\begin{equation} \label{def:good}
\lozenge(H)\leq \frac{24m}{49}
\end{equation}

Our goal is now to show that we may, without loss of generality, sum over only good CFPGs.
Let $H'$ be any (not necessarily good) CFPG, and let $\eta'_1,\eta'_2 \in \text{Em}(H')$. An \textit{erasure step} on $H', \eta'_1,\eta'_2$ is the following modification of $H'$ and $\eta'_1,\eta'_2$: Choose a special pair $x,y\in[n],~x<y.$ 
Choose now an extra white banana $(x,a,y,b)$ containing no interior points, such that in both embedded copies of $H'$, the black face to its right\footnote{'right' ('left') is defined using the clockwise (counter clockwise) orientation at $x$ starting from the diagonal $\overline{xy}$ of the banana.} is a black banana containing no interior points, and, by the definition of a banana, also no connected components of $H$, or leaves of $x,y$. We erase this banana, and move the vertices $a$ and $b$ to the minimal white banana $w(H)$. We get a new CFPG $H$, in which the pair $x,y$ has one banana less, and the banana $w(H)$ has two additional interior points. We also get two embeddings $\eta_1$ and $\eta_2$, which are induced from $\eta'_1$ and $\eta'_2$.The \emph{erasure process} is the result of performing iteratively erasure steps, until there are no more possible steps\footnote{During the erasure process black bananas may be united. We choose $b(H,\eta)$ in a way which is stable under these changes: The minimal black banana is the black banana whose special pair $\{x,y\}$ is minimal in lexicographic order, and, among the bananas with special pair $\{x,y\}$ whose left non-special vertex $z,$ is minimal.}.

For a \emph{good} CFPG $H$ and $\eta_1,\eta_2 \in \text{Em}(H)$, we say that a triple $(H',\eta'_1,\eta'_2)$ \emph{reduces} to  $(H,\eta_1,\eta_2)$ if $H$ can be obtained from $H'$ by the erasure process. Let
$A(H, \eta_1,\eta_2)$ be the set of all triples $(H',\eta'_1,\eta'_2)$ which can be reduced $(H,\eta_1,\eta_2)$. Write $A_r(H,\eta_1,\eta_2)$ for the set $(H',\eta'_1,\eta'_2)\in A(H, \eta_1,\eta_2)$ such that $H'$ has $r$ more white bananas then $H.$

\begin{obs} \label{obs:small_difference}
Let $H$ be a good CFPG on $m$ vertices, let $\eta_1,\eta_2 \in Em(H)$, and let $(H',\eta'_1,\eta'_2) \in A_r(H,\eta_2,\eta_2)$. Then $|r-\tilde{r}(H',\eta'_1,\eta'_2)| \leq m/2$.
\end{obs}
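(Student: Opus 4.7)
The plan is to bound each of $r$ and $\tilde{r}(H', \eta_1', \eta_2')$ separately by $m/2$, whence $|r - \tilde{r}| \leq \max(r, \tilde{r}) \leq m/2$.

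The heart of the argument is the following counting principle: \emph{in any CFPG on $m$ vertices, under any embedding, there are at most $m/2$ bananas of each color}. Recall that a banana has two \emph{side vertices} $v_2, v_4$, each of degree $2$ with both of its edges going to the special pair $\{v_1, v_3\}$. Let $v$ be such a side vertex. Since $v$ has no other incident edges, the two faces bordering the edge $\{v, v_1\}$ necessarily contain both of $v$'s edges on their boundary walks, so they are precisely the two faces incident to $v$; by the $2$-coloring hypothesis, they must have opposite colors. Hence $v$ can be a side vertex of at most one banana of each color. Double-counting (side vertex, color-$c$ banana) pairs gives
\[
2 \cdot \#\{\text{color-}c \text{ bananas}\} \leq \#\{\text{degree-}2 \text{ vertices}\} \leq m.
\]
Applied in $H'$ to both embeddings $\eta_1', \eta_2'$ and both colors, this yields $\tilde{r}(H', \eta_1', \eta_2') \leq m/2$.

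The same bookkeeping controls $r$. Each of the $r$ erasure steps leading from $H'$ to $H$ removes an extra white banana by relocating its two side vertices $a, b$ to the interior of the minimal white banana; once relocated, $a$ and $b$ have no incident edges in the (intermediate) CFPG and therefore cannot reappear as side vertices at any later step. Hence the $2r$ vertices relocated over the $r$ steps are all distinct vertices of $H'$, so $2r \leq m$, i.e.\ $r \leq m/2$. Combining, $|r - \tilde{r}(H', \eta_1', \eta_2')| \leq m/2$, as claimed. The only mildly delicate point is the coloring step above: it crucially uses that the degree-$2$ side vertex $v$ has no other edges to separate the two faces bordering $\{v, v_1\}$ and $\{v, v_3\}$, so those two faces coincide with the two faces at $v$ and must therefore be of opposite colors.
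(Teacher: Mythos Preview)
Your counting principle is correct, but you have applied it to the wrong object. The hypothesis is that $H$ has $m$ vertices, not $H'$. In the erasure process each step removes two boundary vertices (the side vertices $a,b$ of the erased white banana) and converts them into interior points of $w(H)$; hence, going backwards, $H'$ has $m+2r$ boundary vertices, not $m$. Consequently your bound ``$\tilde r(H',\eta_1',\eta_2')\le m/2$'' is only $\tilde r(H',\eta_1',\eta_2')\le (m+2r)/2$, and your bound ``$2r\le m$'' becomes the vacuous $2r\le m+2r$. In fact $r$ is not bounded by $m/2$ at all: the proof of Lemma~\ref{lem:good_bound} explicitly uses $r\le n/2$, and $r$ can be of that order.

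The paper's argument is different and does not attempt to bound $r$ or $\tilde r(H',\ldots)$ separately. The key point is that a single erasure step removes exactly one white banana and, in each embedding, exactly one black banana (the required black banana to the right is absorbed; one checks that the merged black face is again a banana precisely when the opposite black neighbour was, so the net change is $-1$ in either case). Hence every count contributing to $\tilde r$ drops by exactly one per step, giving $\tilde r(H',\eta_1',\eta_2') = r + \tilde r(H,\eta_1,\eta_2)$, so that $|r-\tilde r(H',\eta_1',\eta_2')|=\tilde r(H,\eta_1,\eta_2)$. Now your counting principle, applied to $H$ (which does have $m$ vertices), yields $\tilde r(H,\eta_1,\eta_2)\le m/2$, which is exactly the missing estimate.
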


Indeed, a step of the erasure process does not change the difference between the number of white and black bananas. Therefore $|r-\tilde{r}(H',\eta'_1,\eta'_2)|$ is at most $\tilde{r}(H,\eta_1,\eta_2)$, and this is no more than $m/2$.

The following lemma states that every CFPG can be reduced to some good CFPG.
\begin{lem}\label{lem:good}
For every almost good CFPG $H'$ and pair of embeddings $\eta'_1,\eta'_2 \in \text{Em}(H')$, there is a good CFPG $H$ and a pair of embeddings $\eta_1,\eta_2 \in \text{Em}(H)$ such that $(H',\eta'_1,\eta'_2) \in A(H, \eta_1,\eta_2)$.
\end{lem}

\begin{proof}
We must show that the erasure process always ends at a good CFPG.
Suppose $(H,\eta_1,\eta_2)$ is the result of the erasure process applied to $(H',\eta'_1,\eta'_2)$, and let $m = |V(H)|$.
Let $H_0$ be the graph obtained from $H$ by erasing all of the extra bananas. $H_0$ has $v_0=m-2\lozenge(H)$ vertices and \[e_0\leq 3(v_0-2|C_W|)\] edges, by a standard bound for planar graphs. Note that the connected components of $H_0$ correspond to those of $H$, and that there is a natural bijection between leaves of $H_0$ and leaves of $H$ and between branches of $H_0$ between points $x,y$ which are not extra bananas, and branches of $H$ between the same pair of points which are not extra bananas.

For $i=1,2$ write $\lozenge_\emptyset^i$ for the set of extra bananas $b$ in $H$ such that under $\eta_i,$ to the right of $b$ there is a black banana which is not the special one (the one that will contain internal points). Let $\lozenge_{\neq\emptyset}^i$ be its complement.
By the definition of the erasure process, every extra banana in $H$ must belong to either $\lozenge_{\neq\emptyset}^1$ or $\lozenge_{\neq\emptyset}^2$. Therefore,
\begin{equation}\label{eq:lozenges_sum_to}
 \lozenge\leq|\lozenge_{\neq\emptyset}^1|+|\lozenge_{\neq\emptyset}^2|.
\end{equation}
A pair of vertices, $\{i,j\}$ is \emph{optional in $H~(H_0)$} if $i$ and $j$ belong to the same white banana of $H~(H_0)$, their distance on this banana is exactly $2,$ and if the banana is not a connected component of $H~(H_0)$ then $i,j$ are its special pair.
Let $Opt(H)~(Opt(H_0))$ be the set of optional pairs, and note that $Opt(H)\subseteq Opt(H_0).$ In addition, if $H$ is obtained from some $H'$ as above, then during the erasure process only bananas between an optional pair can be erased.
Let $X$ be the set whose elements are the special black banana, the connected components in $C_W(H_0)$, the leaves of $H_0$ and the branches of $H_0$ between pairs of points $\{x,y\}\in Opt(H_0).$
We will define for $i=1,2$ injections
\[h_i:\lozenge_{\neq\emptyset}^i\to X,\] as follows.
Modify $\eta_i$ to a planar embedding by choosing a point to be a point at infinity.  Under the refined $\eta_i,$ for $b\in\lozenge_{\neq\emptyset}^i$ with a special pair $x,y$ there are precisely the following possibilities:
\begin{enumerate}
\item The black face to the right of $b$ is the special black banana. In this case define $h_i(b)$ to be this banana.
\item If we erase all connected components not containing $b,$ and all leaves at $x,y$ not containing $b$ the induced face to the right of $b$ is a black banana. In this case before this erasure there was at least one leaf of $x$ or $y$ or a connected component which touches the black face to the right of $b.$ If there are such leaves, define $h_i(b)$ to be one of them, considered as a leaf of $H_0$. If there are only connected components then there are two possibilities. If the boundary of the face to the right of $b$ separates these components and infinity, we define $h_i(b)$ to be one of these components, considered as a component of $H_0.$ Otherwise, $h_i(b)$ is the minimal banana between $x,y,$ considered as a branch in $H_0.$
\item Otherwise the face to the right of $b$ is bounded by another branch between $x,y$ which is not a banana. This branch corresponds to a unique branch of $H_0$ between $x,y$ (which must belong to $Opt(H_0)$). Let $h_i(b)$ be that branch.
\end{enumerate}
By definition, if $b_1,b_2$ are two bananas from the same special pair, under any embedding $\eta$ the face to the right of $b_1$ will not be the face to the right of $b_2.$ Thus, the special black banana, each leaf and each branch which is not a banana may appear at most once in the image of $h_i.$ 
Furthermore, planarity implies that a connected component and a white banana (which is minimal for its special pair) may appear at most once. Indeed, a connected component is separated from infinity by a single black face which touches it, or not separated at all. A minimal banana between $x,y$ may appear in the image of $h_i$ only if the outward boundary of the connected component containing this banana is made of two bananas between $x,y.$ Only one of them, $b',$ may have that $h_i(b')$ is the minimal banana.
Thus, for each $i~h_i$ is an injection, and hence:
\begin{equation}\label{eq:bounds_lozenge_nem_i}
|\lozenge_{\neq\emptyset}^i|\leq |X|\Rightarrow\lozenge\leq 2+2|C_W|+2|Leaf(H_0)|+2|Branch(H_0)|,
\end{equation}
where $Leaf(H_0)$ is the set of leaves of $H_0,$ $Branch(H_0)$ is the set of branches of $H_0$ between optional pairs of vertices $\{x,y\}\in Opt(H_0),$
and we have used \eqref{eq:lozenges_sum_to} to derive the right inequality.

The number of leaves at a vertex $x$ of $H_0$ is bounded by $\frac{1}{2}deg_{H_0}(x).$ Thus, \[|Leaf(H_0)|\leq \sum_{x \in V(H_0)}{\frac{1}{2}deg_{H_0}(x)}=e_0.\]
Similarly, the number of branches between $x,y$ is bounded by $\frac{1}{2}\min(deg_{H_0}(x),deg_{H_0}(y)).$
Let $G$ be the graph on the vertices of $H_0$ whose edge set is $Opt(H_0)$. This new graph $G$ is planar, as can be seen by taking an embedding of $H_0,$ drawing the diagonals of the bananas which correspond to $Opt(H_0)$ and erasing the remaining edges of $H_0.$ Thus by Lemma \ref{lem:mckay} we have
\begin{equation}\label{eq:branch_bound}
|Branch(H_0)| \leq \frac12 \sum_{i,j \in Opt(H_0)}{\min(deg_{H_0}(i),deg_{H_0}(j))}\leq \frac32 \sum_{i} deg_{H_0}(i) \leq 3e_0.
\end{equation}
Putting this together, we see that
\[\lozenge\leq 2 + 2|C_W| + 8e_0\leq 24(m-2\lozenge),\]
and therefore
\[\lozenge\leq\frac{24m}{49},\]
as claimed.
\end{proof}

The following lemma is the main technical proof in this section. It states that for any good CFPG $H$, the sum of the contributions of all the triplets that can be reduced to it, is not much larger than the contribution of $H$ itself.
\begin{lem} \label{lem:good_bound}
There is a global constant $C>0$ such that for any good CFPG $H$ on $m$ vertices, and pair of embeddings $\eta_1,\eta_2 \in \text{Em}(H)$, we have
\[
\sum_{(H',\eta'_1,\eta'_2)\in A_r(H,\eta_1,\eta_2)}{f'(H',\eta'_1,\eta'_2)} \leq C^m 8.1^{-3r} \cdot f'(H,\eta_1,\eta_2).
\]
\end{lem}

\begin{proof}
Let $H$ be a good graph on $m$ vertices. We use the terminology of an optional pair, from the proof of Lemma \ref{lem:good}. 
The number of optional pairs for $H$ is no more than twice the total number of bananas, hence no more than $m$.
If $H$ is a result of the erasure process then $H$ has, in particular, a distinguished banana $w(H)$ which has $k_0$ internal points.

The set of graphs $H'$ which can be reduced to $H$ in the erasure process and have $r$ extra bananas is of cardinality no more than

\[\binom{k_0}{2r}\sum_{\{r_{\{i,j\}}\}_{\{i,j\}\in Opt(H)}:\sum r_{\{i,j\}}=r}\binom{2r}{\{2r_{\{i,j\}}\}_{\{i,j\} \in Opt(H)}}\prod_{\{i,j\}\in Opt(H)}(2r_{\{i,j\}}-1)!!\]

The first binomial is the number of ways to choose the points that will create the $r$ new bananas. The sum $\{r_{\{i,j\}}\}$ tell us how many extra bananas we are adding to each optional pair. The next multinomial chooses which of the $2r$ points go to which optional pair. Finally, the factor $(2r_{\{i,j\}}-1)!!$ is the number of ways to pair up the $r_{\{i,j\}}$ points for a given pair $i,j$.

Recall that $\tilde{L}(i,j)$ is the number of branches between $i$ and $j$.
Given such $H',H,$ and an embedding $\eta$ of $H,$ the number of embeddings of $H'$ which may give rise to $(H,\eta)$ at the end of the erasure process is no more than
\[\prod_{\{i,j\}\in Opt(H)}\frac{(r_{\{i,j\}}+\tilde{L}(i,j)-1)!}{(\tilde{L}(i,j)-1)!},\]
as these embeddings correspond to cyclic orderings of all the branches between $i,j$ in $H'$ that agree with the cyclic orders given by $\eta$ to the branches between $i,j$ in $H$.

Thus, for a good $H$ as above, the ratio
\begin{equation} \label{eqn:ratio}
\frac{\sum_{(H',\eta'_1,\eta'_2)\in A_r(H,\eta_1,\eta_2)}{f(H',\eta'_1,\eta'_2)}}{f(H,\eta_1,\eta_2)}
\end{equation}
is bounded by
\[
\frac{\binom{k_0}{2r} \cdot T_{k_0-2r,4}}{{T_{k_0,4} \cdot p^{-2r}}}\cdot
\sum_{\{r_{\{i,j\}}\}_{\{i,j\}\in Opt(H)}:\sum r_{\{i,j\}}=r}\binom{2r}{\{2r_{\{i,j\}}\}} \cdot
\]
\[
 \cdot \prod_{\{i,j\}\in Opt(H)}(2r_{\{i,j\}}-1)!!   \prod_{\{i,j\}\in Opt(H)}\left(\frac{(r_{\{i,j\}}+\tilde{L}(i,j)-1)!}{(\tilde{L}(i,j)-1)!}\right) ^2    ,
\]
where the power of $p$ counts the difference in white triangles, since $T_H = T_{H'}+2r$. Our goal is now to estimate this sum.

By Theorem \ref{thm:Tutte},
\[\frac{T_{k,4}}{(1+k)^{-\frac{5}{2}}\gamma^k k!}=O(1).\]

Note that the cardinality of the set $\{\{r_{\{i,j\}}\}_{\{i,j\}\in Opt(H)}:\sum r_{\{i,j\}}=r\}$ is $\binom{r+|Opt(H)|-1}{|Opt(H)|-1}$, which by Observation \ref{obs:useful_binom_est} is bounded for any positive $\delta$ by \[C_\delta^{|Opt(H)|}(1+\delta)^r\leq C_\delta^{m}(1+\delta)^r.\]

Also by Observation \ref{obs:useful_binom_est}, the expression $ \prod_{\{i,j\}\in Opt(H)}\left(\frac{(r_{\{i,j\}}+\tilde{L}(i,j)-1)!}{(\tilde{L}(i,j)-1)!}\right)$ is bounded by
\[C_\delta^{\sum \tilde{L}(i,j)}(1+\delta)^r\prod_{\{i,j\}}(r_{\{i,j\}})!\]

Observe that the number of branches between any two vertices $i,j$ is upper bounded by $\frac12\min(deg_H(i),deg_H(j))$. The exact same argument used to show \eqref{eq:branch_bound} yields
\[
\sum_{i,j \in Opt(H)} \tilde{L}(i,j) \leq \frac12 \sum_{i,j \in Opt(H)}{\min(deg_H(i),deg_H(j))} \leq\frac32 \sum_{i} deg_H(i) \leq 9m.
\]

Using these estimates, and the fact that $(2a-1)!!\leq a!2^a$, \ref{eqn:ratio} is bounded by
\[
\frac{p^{2r} (1+k_0)^{5/2}}{\gamma^{2r} (1+k_0-2r)^{5/2}}C_{\delta}^{10m}(1+\delta)^{3r} \cdot r!2^r.
\]
Since $2r\leq k_0,$ for any $\delta>0$ one can find $c_\delta$ with
\[\left(\frac{1+k_0}{1+k_0-2r}\right)^{2.5}\leq c_\delta(1+\delta)^r,\]
putting everything together, plugging in the value of $p$ and using Stirling's estimate,
\ref{eqn:ratio} is bounded by
\[ C_{\delta}^{10m} \left(\frac{2r(1+\delta)^4}{\gamma^3 n}\right)^r. \]
As $\gamma>8.1$ and using the fact that $r \leq n/2$, by choosing a sufficiently small $\delta$, this is at most
\[C^m 8.1^{-3r}.\]
\end{proof}

Putting Lemmas \ref{lem:almost_good},\ref{lem:good}, and \ref{lem:good_bound} together, we have
\[
\sum_{H\in \tilde{P}_n,\eta_1,\eta_2 \in Em(H)}{f(H,\eta_1,\eta_2)} =
\]
\[
\sum_{H \text{ is almost good.}} \sum_{H' \in R(H), \eta'_1,\eta'_2 \in Em(H)}{f(H',\eta'_1,\eta'_2)} \leq
\]
\[
\sum_{H \text{ is almost good, }\eta_1,\eta_2 \in Em(H)}{8^{3\tilde{r}} \cdot f'(H,\eta_1,\eta_2)} \leq
\]
\[
\sum_{m \geq 3}\sum_{H \text{ is good on $m$ vertices, } \eta_1,\eta_2 \in Em(H)}  \sum_{r\geq 0} \sum_{(H',\eta'_1,\eta'_2) \in A_r(H,\eta_1,\eta_2)}{8^{3\tilde{r}} \cdot f'(H',\eta'_1,\eta'_2)} \leq
\]
\[
\sum_{m \geq 3}  \sum_{H \text{ is good on $m$ vertices, } \eta_1,\eta_2 \in Em(H)} \sum_{r\geq 0} 8^{3(r+m/2)} \cdot \sum_{(H',\eta'_1,\eta'_2) \in A_r(H,\eta_1,\eta_2)}{f'(H',\eta'_1,\eta'_2)}\leq
\]
\[
\sum_{m \geq 3} C^m  \sum_{H \text{ is good on $m$ vertices, } \eta_1,\eta_2 \in Em(H)} f'(H,\eta_1,\eta_2) \cdot \left(  \sum_{r\geq 0} {\left(\frac{8}{8.1}\right)^{3r}}\right) .
\]
Here the first inequality follows from Lemma \ref{lem:almost_good}, the second follows from Lemma \ref{lem:good}, the third follows from observation \ref{obs:small_difference}, and the fourth follows from Lemma \ref{lem:good_bound}.

Therefore, we see that Theorem \ref{thm:main} will follow from showing that for any constant $C>0,$
\begin{align} \label{eq:to_prove_good_graphs}
\frac{1}{|S_n|^2}\sum_{m \geq 3} C^m\sum_{H~\text{a good CFPG on $m$ vertices}}\sum_{\eta_1,\eta_2\in Em(H)}f'(H,\eta_1,\eta_2) = o(1).
\end{align}

\subsection{Step two: Bounding the number of ways to embed $H$}\label{subsec2}
The contribution of  a CFPG $H$ to the sum in (\eqref{eq:to_prove_good_graphs}) is
\[
\sum_{\eta_1,\eta_2 \in \text{Em}(H)} p^{-T_H} \cdot |M'(H)|\cdot N'(H,\eta_1) N'(H,\eta_2).
\]

By the Cauchy-Schwarz inequality, this is at most
\begin{equation}\label{eq:CS}
\sum_{\eta \in \text{Em}(H)} p^{-T_H} \cdot |M(H)|\cdot |\text{Em}(H)| \cdot N(H,\eta)^2.
\end{equation}

Therefore, we would like to bound $|\text{Em}(H)|$.
In this section, we give bounds on $|Em(H)|$ for any (not necessarily good) CFPG $H$.

Any embedding $\eta \in \text{Em}(H)$ induces an embedding of every connected component $C \in C_W$ into the plane. In addition, $\eta$ tells us how to fit these embedded components together - which component goes in which face. In fact, these two pieces of information determine $\eta$ completely (up to an isotopy), and we can use this to get a bound on $|\text{Em}(H)|$.

\begin{lem}\label{lem:counting_embeddings}
Let $H$ be a CFPG, and let $C_W$ be the set of its white connected components. Then there is a constant $c_3>0$ such that
\[
|\text{Em}(H)| \leq \left(\prod_{C\in C_W}{|\text{Em}(C)|}\right) \cdot c_3^m m^{|C_W|}.
\]
\end{lem}
\begin{proof}
We show that given embeddings of the connected components, there are at most $ c_3^m m^{|C_W|}$ ways to fit the connected components together, where $c_3>0$ is a constant.

Given embeddings of the connected components in $C_W$, we describe a one to one correspondence between embeddings $\eta \in \text{Em}(H)$ and edge-labelled rooted trees on $|C_W|+1$ vertices. Given $\eta$, we construct such a tree as follows. We choose a black face $b$ in some canonical way (for example, as the minimal face in some total ordering of cyclical sequences of vertices). This face will correspond to the root of the tree, and all of the other vertices in the tree will correspond to connected components in $C_W$. By stretching $b$ to be the outer face, we can think of $\eta$ as a planar map. We connect $b$ to all the components that it surrounds. Recursively, whenever a component $C$ contains an inner black face that surrounds a component $C'$, we connect $C$ and $C'$ by an edge, and label that edge according to the black face that they share. Therefore, the number of possible labels that the edges connecting a component $C$ to its children can receive is the number $B_C$ of black faces of $C$.

In the other direction, given such a labeled tree, we can construct $\eta$ recursively as follows. Note that the outer black face $b$ is already determined by the embeddings of the connected components. We associate $b$ with the root of the tree, and put all of the components corresponding to $b$'s children in the tree inside $b$. For each component $C$, we put the components corresponding to its children inside inner black faces of $C$. The black face we choose for such a child $C'$ is specified by the label of the edge $\{C,C'\}$.

Now, a tree $T$ has at most $\prod_{C \in C_W}{B_C^{d_T(C)-1}}$ possible labelings, where $d_T(C)$ is the degree of the vertex corresponding to $C$ in $T$.
We can count these labeled trees using a weighted version of Cayley's formula for trees \cite{Cayley}.
\begin{theorem}
Associate a variable $x_v$ to every vertex $v \in [n]$, and associate the monomial $\prod_{v \in [n]}x_v^{d_T(v)}$ to every $n$-vertex tree $T$. There holds
\[
\sum_T \prod_{v \in [n]}x_v^{d_T(v)} = \prod x_i\left(\sum_{v \in [n]}{x_v}\right)^{n-2}.
\]
\end{theorem}
Thus, we are interested in the sum
\[
\sum_{T \text{: tree on } C_W \cup \{b\}}\prod_{C \in C_W}{B_C^{d_T(C)-1}}  \leq
\prod B_C\left(\sum{B_C}\right)^{|C_W|-2} \leq
c_3^m m^{|C_W|}.
\]
The last inequality is a consequence of $\sum B_C = O(m),|C_W|=O(m)$ (see Proposition \ref{prop:planar_estimates}), and $\prod B_C\leq \left(\frac{\sum B_C}{|C_W|}\right)^{|C_W|}\leq e^{O(m)}$ by the inequality of means and the fact that the function $f(x)=\left(\frac{M}{x}\right)^x$ is maximized by $x=M/e$.

\end{proof}

\begin{nn}
For a CFPG $H$ with $m$ vertices, let
\[em(H):=\left(\prod_{C\in C_W(H)}{|\text{Em}(C)|}\right) \cdot m^{|C_W(H)|}.\]
\end{nn}

We now estimate $|\text{Em(C)}|$ for a given white component $C$ with $m_0$ vertices.
Our strategy is to first simplify the graph corresponding to $C$, without changing the number of vertices and black and white faces, decreasing $|\text{Em(C)}|$, or creating new extra bananas. We will show that we may, without loss of generality, assume that $C$ has the following properties.

\begin{enumerate}
\item
There is at most one vertex $x$ whose removal disconnects $C$ into more than two parts. For any other vertex $u$ whose removal disconnects $C$, the removal of its single leaf which does not contain $x$ results in a new banana, one of whose non-special vertices is $u$.
\item
There is a pair of vertices, denoted by $y,z$ with a maximal number of branches. All other pairs of vertices have at most $3$ branches.
\item
If $y,z$ have at least $3$ branches then for any other pair of vertices, $u,v$ there is a single branch containing $y,z.$ The other possible branches must either be at most one branch containing the edge $uv$, and at most one banana. If there are no such branches between $u,v$, they may have another branch, exactly if $u,v$ belong to the same branch of $y,z$, and if removing each one of $u,v$ separately from that branch, disconnects it.
\end{enumerate}
We call such a graph \emph{bipolar}.

\begin{figure}
 \centering
 \includegraphics[width=130
 mm,height=55mm]{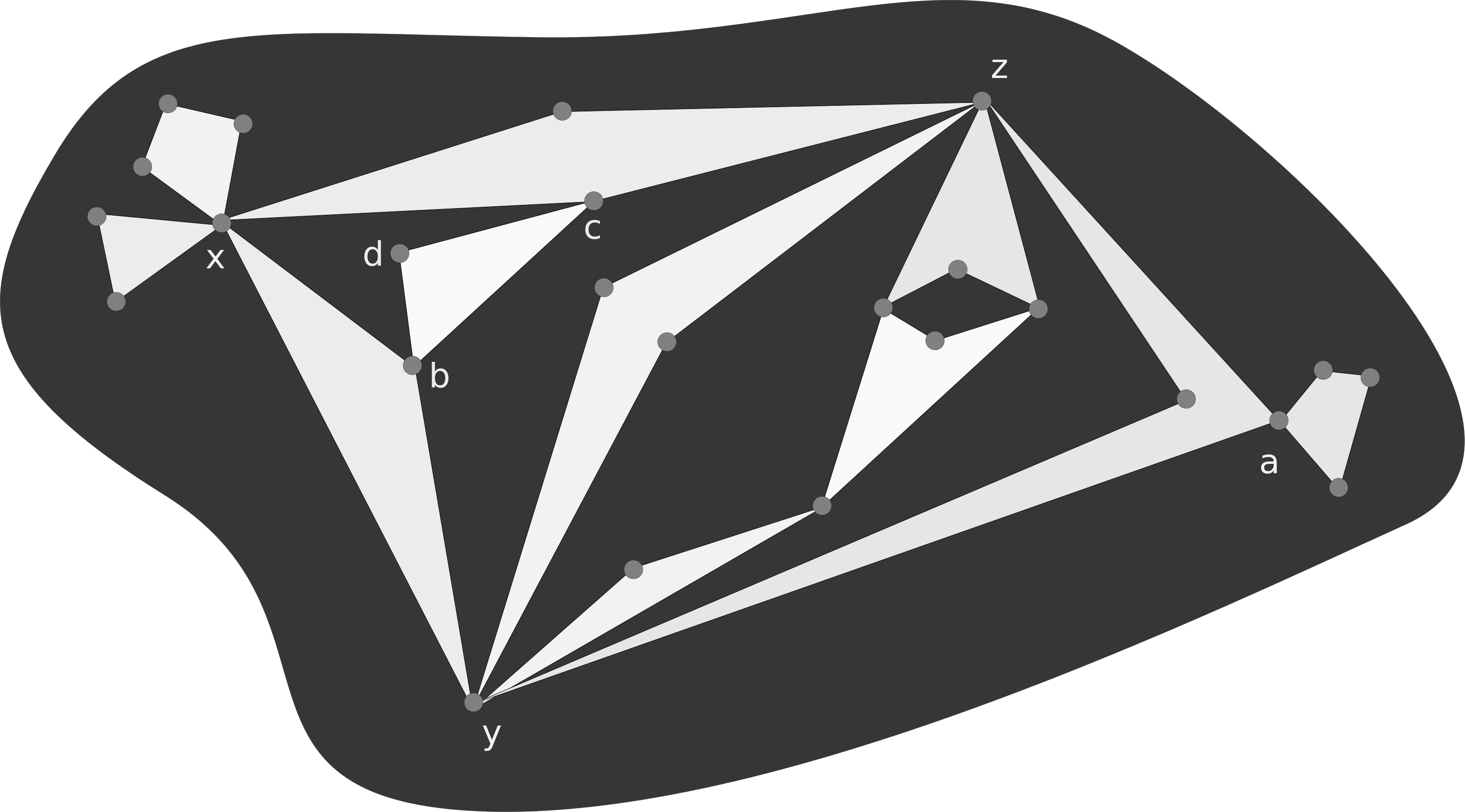}
 \caption{An example of a bipolar graph. Note that the removal of $a$'s leaf would create a new extra banana between $y$ and $z$, and that $bcd$ cannot be moved, as it contains the edge $bc$.}
\end{figure}

Let $x$ be a vertex of maximal degree in $C$, and recall that a leaf of a vertex $y$ is a connected component of the trimming $C_y$.
For a vertex $u \neq x,$ let $l_u$ be the leaf of $u$ that contains $x$.

We would like to modify $C$ by moving all of $u$'s leaves other than $l_u$ to be leaves of $x$. However, this may create a new extra banana, in which $u$ is one of the non-special vertices. In this case, we choose the minimal leaf $l'_u$ of $u$ not containing $x$ according to some total ordering of the leaves of $C$, and move all of $u$'s leaves other than $l_u, l'_u$ to be leaves of $x$. We call this a \emph{leaf step} from $u$ to $x$, and denote the resulting component by $\tilde{C}$. The following lemma states that the number of embeddings can only grow as the result of such steps.

\begin{lem}
$|\text{Em}(C)| \leq |\text{Em}(\tilde{C})|$.
\end{lem}

\begin{proof}

The vertices of a white face $f$ of $C$ have two cyclic orderings. For a given planar embedding $\eta$ of $C$, we define the orientation of $f$ induced by $\eta$ to be the counter-clockwise cyclic ordering of $f$'s vertices.
Thus, an embedding $\eta$ induces a vector $\pi(\eta)$ of orientations for all of $C$'s white faces.

Let
\[
\text{Em}_{\phi}(C):=\{\eta:\pi(\eta)=\phi\},
\]
and note that $\text{Em}(C)$ is the disjoint union of $\text{Em}_{\phi}(C)$. Therefore, it is sufficient to prove that $|\text{Em}_{\phi}(C)| \leq |\text{Em}_{\phi}(\tilde{C})|$ for all orientation vectors $\phi$.

Fix an orientation $\phi$ of the white faces of $C$. This orientation induces a directed graph structure on $C$, by orienting all of the edges so that they have a white face to their left. Hence, all of the embeddings in $\text{Em}_{\phi}(C)$ induce the same directed graph structure. Note that in any such embedding of $C$, the edges at any vertex $v$ considered in counterclockwise order alternate between incoming and outgoing, and in particular the out-degree at $v$ is equal to the in-degree.

The proof idea is to construct an injective mapping from $\text{Em}_{\phi}(C)$ to $\text{Em}_{\phi}(\tilde{C})$.
For this purpose, we will need some definitions. Let $l,l'$ be two leaves at a vertex $v$. For a given embedding of $C$, we say that a leaf $l$ is \textit{nested} in $l'$ if $l$ is bounded by an inner black face of $l'$.

Consider a leaf step from a vertex $u \neq x$ to $x$, and assume first that erasing $u$'s leaves does not create a new banana, one of whose non-special vertices is $u$.

Recall that $l_u$ is the unique leaf at $u$ that contains $x$. For the moment, we forget about the other leafs of $u$, and construct an injection from $u$'s local black faces in $l_u$ to the local black faces of $x$. This injection will tell us where to embed $u$'s other leaves in the embedding of $\tilde{C}$ that we are constructing.

Since $x$ is a vertex of maximal degree, there exists an injection $h'_u$ from the outgoing edges of $u$ in $l_u$ to the outgoing edges of $x$.
Now, in any embedding of $C$, we define an injection $h_u$ from the local black faces of $l_u$ at $u$, to the local black faces of $x$.
Every local black face $b$ of $u$ in $l_u$ has a unique outgoing edge $e_b$ at $u$, and every outgoing edge $e$ of $x$ touches a unique local black face $f_e$ of $x$. We define $h_u(b) = f_{h'(e_b)}$.


We claim that moving all of $u$'s leaves to be leaves of $x$ can only increase the number of embeddings. Indeed, if we fix an embedding of $u$'s leaves, in order to define an embedding of $C$ we must specify the nesting structure of these leaves at $u$, and, for leaves which are bounded by the same black face, their relative order. Given any such embedding of $C$, one can define an embedding of $\tilde{C}$ by moving all of $u$'s leaves bounded by a black face $b$ of $l_u$ to the local black face $h(b)$ of $x$, without changing their order and nesting structure. If under the embedding there are already leaves of $x$ nested in $h(b)$, put the leaves coming from $u$ to their left.

Note that this operation does not change the orientation induced by the embedding. The fact that $h$ is injective implies that this map is injective, and so we have not decreased the number of embeddings. Note that $x$'s degree remains maximal after such a step.

If erasing $u$'s leaves creates a new banana, we proceed as before, except that we define $h'_u$ to be an injection from the outgoing edges of $u$ in both $l_u$ and $l'_u$ to the outgoing edges of $x$. As before, this enables us to construct an injective mapping from embeddings of $C$ to embeddings of $\tilde{C}$.

\end{proof}

Suppose now that $C$ has several pairs of points with more than one branch between them. If one of these pairs contains $x$, set $z=x$ and set $y$ to be the vertex that maximizes the number of branches between $x,y$. Otherwise, let $y,z$ be the pair with the maximal number of branches.

Let $u,v$ be a different pair of vertices with at least two branches. We can move a branches between $u,v$ to be a branch of $y,z$, by replacing $u$ by $y$ and $v$ by $z$ in all of the branches edges and faces. A \emph{branch step} from $u,v$ to $y,z$ is defined by moving all the branches between $u,v$ to $y,z$, with four exceptions:
\begin{itemize}
\item We do not move a branch if it contains the edge $uv$, as this might create a double edge at $yz$.
\item We do not move the minimal banana between them, as this could increase the number of extra bananas.
\item We clearly cannot move a branch if it contains $y$ or $z$. Note that if $y,z$ have at least three branches, then there must be a single branch containing both $y$ and $z$.
\item We do not move a branch if its removal would disconnect a branch from $y$ to $z$.
\end{itemize}

Note that these exceptions can account for at most three branches of $u,v.$
Let $C'$ denote the resulting CFPG. We claim that this change can only increase the number of embeddings, except for one case in which the number of embeddings can decrease by a factor of $m_0$.

Assume first that $x \notin \{u,v,y,z\}$ and that $u,v,y,z$ are all different. Assume that $y,z$ have $a$ branches, and that $u,v$ have $b$ branches, of which $b'$ are not moved.

One can extend an embedding $\eta$ of the branches between $u,v$ to an embedding of $C$ by ordering the branches of $u,v$. In this way, we obtain $b!$ embeddings of $C$ that extend $\eta$.

On the other hand, one can extend $\eta$ to an embedding of $C'$ by ordering the remaining $b'$ branches of $u,v$, and then choosing a single ordering for the branches of $y,z$ together with the branches of $u,v$ that we moved, obtaining $b'! \cdot (a+b-b')!/a!$ embeddings.

Since $b! \leq b'! \cdot (a+b-b')!/a!$ when $a \geq b'$, this implies that we have not decreased the number of embeddings. Summing over all embeddings $\eta$ shows that $|Em(C)| \leq |Em(C')|.$

When $x\in\{u,v\},$ which may happen if new branches were created during the process, we shall move the branches of $y,z$ to $u,x=v$ and redefine $z$ to be $x$ and $y$ to be $v.$ In this case, the same analysis reveals that if originally there were two branches between $x,u$ and if there are three branches of $y,z$ we cannot move because of the restrictions above, in this case, the number of embeddings may decreases by at most a multiplicative factor of $ 3(b+1)!/2b! \leq m_0$, but this event may happen only once.


This argument extends with minor changes to the cases $x \in \{y,z,u,v\}$ and the case that $u,v,y,z$ are not all different.

To summarize, we have proved the following lemma.
\begin{lem}
In a branch stpe, $|\text{Em}(C)| \leq |\text{Em}(C')|,$ unless it is the step in which we redefine $z=x.$ In the latter case $|\text{Em}(C)| \leq m_0 |\text{Em}(C')|$.
\end{lem}

We now perform all of the possible leaf steps, followed by all of the possible branch steps, and if new leaves or branches are created during the process, we continue with leaf and branch steps until no such step is possible. Note that we will redefine $z$ to be $x$ in at most one of the steps. This process is finite, as every step but one increases the number of embeddings. It follows from the definitions of branch steps and leaf steps that the process ends with a bipolar graph.

Thus, in order to maximize $|Em(C)|$, under our constraints, we may assume that $C$ is bipolar. Such components have the following property.

\begin{prop}\label{prop:Whitney}
There is a constant $c_4$ such that the number of embeddings of a bipolar component $C$ is bounded, up to an exponential factor $c_4^{m_0}$,  by the number of ways to order the branches between $z,y$ and choose the ordering and nesting structure of the leaves of $x$.
\end{prop}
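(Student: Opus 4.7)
The plan is to invoke Whitney's uniqueness theorem for planar embeddings in combination with the block-cut tree and the SPQR decomposition of $C$. For any planar graph, the number of distinct embeddings in the sphere is controlled by three pieces of local data: (a) at each cut vertex, the cyclic and nesting arrangement of the leaves (blocks) meeting it; (b) at each $2$-cut (separating pair of vertices), the cyclic ordering of the parallel branches between them; and (c) for each $3$-connected core in the SPQR decomposition, a reflection. Whitney's theorem says that the last of these contributes at most $2$ per core.

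With this framework in mind, I would first enumerate the cut vertices of $C$. By the first bipolar condition, $x$ is the unique cut vertex whose removal creates more than two components, and every other cut vertex $u$ has exactly two leaves: the one containing $x$, and the minimal leaf $l'_u$ whose removal creates a new banana at $u$. Hence the cyclic and nesting arrangement at every cut vertex $\neq x$ is determined up to a bounded constant, and summed over all such vertices it gives a contribution of at most $c^{m_0}$. Turning to the $2$-cuts: by the second and third bipolar conditions, every separating pair $\{u,v\} \neq \{y,z\}$ has at most three branches, with a rigid structure when $\{y,z\}$ itself has three or more branches. Each such P-node therefore contributes a bounded constant, and the number of such pairs is at most $O(m_0)$, so their total contribution is again at most $c^{m_0}$.

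The remaining pieces of the SPQR decomposition --- the S-nodes, corresponding to cycles, and the R-nodes, corresponding to $3$-connected cores --- each contribute at most $2$ by Whitney's theorem and by the uniqueness of cycle embeddings; their total count is $O(m_0)$, giving another $c^{m_0}$ factor. Multiplying the three bounded contributions together and combining with the free choices at $x$ (ordering and nesting of leaves) and at $\{y,z\}$ (ordering of branches) yields the desired inequality
\[
|\text{Em}(C)| \leq c_4^{m_0} \cdot N_{y,z}(C) \cdot N_x(C),
\]
where $N_{y,z}(C)$ counts orderings of the branches between $y$ and $z$ and $N_x(C)$ counts orderings and nestings of the leaves at $x$. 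The main obstacle will be verifying that the restricted extra branches permitted by the third bipolar condition --- the branch containing an edge $uv$, a branch containing $y$ or $z$, or a branch whose removal disconnects a $\{y,z\}$-branch --- do not generate hidden embedding freedom; this is handled by noting that each such branch is essentially pinned in place by its incidence to a distinguished edge or vertex, and so contributes only a bounded number of configurations.
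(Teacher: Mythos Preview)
Your approach is sound and leads to a correct proof, but it proceeds quite differently from the paper's argument. The paper does not invoke the SPQR machinery at all: instead, given an embedding $\eta$ of $C$, it explicitly \emph{constructs} a $3$-connected CFPG by adding ``ribbons'' across black faces---one ribbon between each pair of consecutive branches at $y$ and at $z$, one ribbon for each banana or edge-branch at other pairs $\{u,v\}$, and one ribbon at each cut vertex---and then applies Whitney's theorem once to this augmented object. The exponential bound comes from counting the number of ways the ribbons could have been placed, which is controlled via McKay's planarity lemma (Lemma~\ref{lem:mckay}). Your route replaces this hands-on construction with the standard block--cut/SPQR enumeration of planar embeddings, using the bipolar conditions to cap the degree of every P-node other than $\{y,z\}$ and the branching at every cut vertex other than $x$; Whitney is then applied to each R-node separately. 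This is arguably cleaner conceptually and avoids McKay's lemma, at the cost of importing the SPQR framework.

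Two points in your sketch deserve tightening. First, $C$ is a CFPG, not a graph, and the paper's notions of leaf and branch are topological (via trimming); you should say explicitly that you apply SPQR to the graph obtained by replacing each white face with a $3$-connected triangulation, so that graph-theoretic cut vertices and split pairs coincide with the CFPG ones. Second, your claim that the arrangement at a cut vertex $u\neq x$ is ``determined up to a bounded constant'' is correct but for a reason you do not state: the first bipolar condition forces $u$ to have degree exactly $2$ in the leaf $l_u$ containing $x$ (since $u$ becomes a non-special banana vertex there), so there is only one black corner at $u$ into which $l'_u$ can be placed. Finally, ``the number of such pairs is at most $O(m_0)$'' is not literally true for separating pairs in general (a cycle has quadratically many), but what you need---and what holds---is that the SPQR tree has $O(|E|)=O(m_0)$ nodes; phrase the bound in terms of P-nodes rather than all $2$-cuts.
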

The proof is in the appendix.

We now bound $Em(C)$ in terms of the number of leaves at $x$ and branches between $y,z$.
Suppose that there are $b$ branches between $y$ and $z$, and that there are $a$ leaves at $x,~A_1,\ldots,A_a,$ in addition to the special leaf $A_0$ containing $y,z.$ Suppose $A_i$ has $a_i$ local black faces (including the external one, which is defined only after embedding in the plane).

\begin{lem}\label{lem:Ran}
The number of embeddings of $C,$ times $m_0,$ is
\[m_0c_4^{m_0} |Em(A_0)| \cdot \frac{(2a-1+\sum a_i)!}{(a+\sum a_i)!} \cdot \prod_{i=0}^a a_i\leq b! \cdot c_5^{m_0} m_0^{a-1}. \]
for some constant $c_5.$
\end{lem}
\begin{proof}
Fix an embedding of $A_0$.
We first determine the number of ways to order the branches between $z,y$ and embed the leaves, and then we add the exponential factor that should be added by Proposition \ref{prop:Whitney}. Similarly to Lemma \ref{lem:counting_embeddings}, the nesting of the leaves defines a tree structure. However, here there are $c!$ ways to nest $c$ leaves in a given local black face.

Imitating the argument of Lemma \ref{lem:counting_embeddings} we get, for a nesting tree $T$, a product over its vertices $v$, which correspond to the leaves $A_i$, of the term
\[\sum_{\{\alpha_j\}_{j=1}^{a_i},~\sum \alpha_j=d_T(v)}\binom{d_T(v)}{\alpha_1,\ldots,\alpha_{a_i}}\prod_{j=1}^{a_i}\alpha_j!=\frac{(d_T(v)+a_i-1)!}{(a_i-1)!}.\]
The sum of these products over all possible trees was calculated in \cite{TesCayley} and was proven to equal to the expression in the statement of the lemma. The right hand side of the statement of the lemma follows from the facts that $a\leq m_0/3$ and $\sum a_i\leq m_0/2$ and Proposition \ref{prop:Whitney}.

\end{proof}

The following bounds will be useful later.

\begin{lem}\label{lem:embed_comp}
Let $m_0 \leq m$. If $C_0$ is a connected CFPG with $m_0$ vertices, $B$ black faces, $W$ white faces and $\lozenge_0$ extra white bananas, then
\[
(m_0+W-B)/2 - \log_m(|Em(C_0)|) \geq (m_0-2\lozenge_0)/18-m_0\log_m c_5,
\]
and if $100 < m_0 \leq m/2$, then
\[
(m_0+W-B)/2 - \log_m(|Em(C_0)|) \geq 2+(m_0-2\lozenge_0)/18-m_0\log_m c_5,
\]
where $c_5$ is the constant from Lemma \ref{lem:Ran}.

\end{lem}

\begin{proof}
By the above discussion, up to $c_5^{m_0}$, $|Em(C_0)|$ is at most $|\tilde{Em}(C)|$, which we define as the number of ways to embed the leaves and branches of a bipolar graph $C$ that can be obtained from $C_0$. We therefore prove the analogous claims for $C,$ without the $m_0\log_m c_5$ term. Let $\lozenge\leq\lozenge_0$ be the number of extra bananas of $C.$
Let $\alpha$ be the number of leaves at $x$, and let $\beta$ number of branches of $y,z$. By Lemma \ref{lem:Ran}
\[\log_m(|\tilde{Em}(C)|)\leq \log_m(m_0)(\alpha +\beta-1).\]
Thus,
\begin{equation}\label{eq:shitbird}
(m_0+W-B)/2-\log_m(|\tilde{Em}(C)|)\geq (m_0+W-B)/2-\alpha-\beta+1+(1-\log_m(m_0))\beta.
\end{equation}
Now, starting from the empty graph, add the branches between $y,z$ to the graph one by one, starting with the branch containing the edge $yz$, if there is such a branch. We keep track of the contribution of each branch that we add to the right hand side of \eqref{eq:shitbird}.

Note that the value corresponding to the first branch $K_0$ is at least
\[
\frac{m(K_0)+W(K_0)-B(K_0)-1}{2}+(1-\log_m(m_0)) \geq \frac{m(K_0)}{12} + (1-\log_m(m_0)),
\]
by Lemma \ref{lem:W-B}. Note that every branch that we add splits a black face in two.

Any other branch $K$ contributes at least
\[\frac{m(K)-2+W(K)-B(K)-1}{2}-1 + (1-\log_m(m_0)) \] to the right hand side of \eqref{eq:shitbird}.
This is $1-\log_m(m_0)$ for a banana, at least $\frac{1}{2}$ for a component with $5$ vertices that does not contain the edge $yz$, and at least $\lceil \frac{m(K)-5}{2}\rceil/2\geq m(K)/12$ in general, by Lemma \ref{lem:W-B}.
A similar analysis for leaves reveals that when we add a leaf $K$, the contribution to \eqref{eq:shitbird} is at least  \[ (m(K)-1+W(K)-B(K))/2 - 1\geq \lceil \frac{m(K)-1}{2}\rceil/2\geq m(K)/12.\]
Therefore, we have
\[(m_0+W-B)/2-\log_m(|\tilde{Em}(C)|)\geq (m_0-2\lozenge)/12+ (1-\log_m m_0)\lozenge.\] This is clearly at least $(m_0-2\lozenge_0)/18$.

Now, the inequality in the second item holds unless $\frac{m_0-2\lozenge}{18}+2> \frac{m_0-2\lozenge}{12}$, implying that $72>m_0-2\lozenge$, and $\left( 1-\log_m(m_0) \right) \lozenge <2$.
A simple analysis shows that these two constraints cannot hold when $100<m_0<m/2$.

\end{proof}

\subsection{Step three: Analysis in terms of embedded graphs}\label{subsec3}
Our goal is to understand
\begin{equation} \label{eqn:old_sum}
\frac{1}{|S_n|^2}\sum_{H~\text{is good}}\sum_{\eta \in \text{Em}(H)} p^{-T_H} \cdot |M(H)|\cdot |\text{Em}(H)| \cdot N(H,\eta)^2.
\end{equation}

We now rearrange the terms in this sum, and introduce new notations.
Recall that a CFPG consists of a planar graph $G$, together with the structure of its white faces and an allocation of interior points for the white faces. Therefore, the sum over all good CFPGs can be expressed as a sextuple sum: We are summing over the number of vertices $m$, subsets $S\subseteq [n]$ of size $m$, colored planar graphs $H$ on the vertex set $S$ in which the white faces are specified, the number $k$ of interior points in these white faces, subsets $R \subseteq [n]\setminus S$ of size $k$, and allocations of the points in $R$ for the white faces.

Consider the set of embedded spanning planar graphs on $m$ vertices, together with a 2-coloring of their faces. Let $GP_m$ be the set of such graphs that are good in the sense of Definition \ref{def:good}. Let $M(G,k)$ be the number of triangulations of the black faces of $G$, using $k$ interior vertices in the black faces. For $G \in GP_m$, let $\bar{G}$ be the same embedded planar graph, with the colors reversed: black faces become white, and white faces are black. Note that $M(\bar{G},k)$ is the sum over all allocations of $k$ interior points for the white faces of $G$ of $|M(H)|$, where $H$ is the CFPG determined by $G$ and the allocation.

Let $T_{G,k}$ be the number of white triangles in a triangulation of the white faces using $k$ interior vertices in the white faces. For $G \in GP_m$, we define $em(G):=em(H)$ where $H$ is the underlying planar graph, together with the structure of its white faces. Recall by Lemma \ref{lem:counting_embeddings} that $em(H)\geq |Em(H)|$.

In Subsection \ref{subsec1}, we defined $C_W(F)$ for completable $F$'s and $C_W(H)$ for a CFPG $H$. We now define $C_W(G)$ for embedded colored $G \in GP_m$.
By choosing an arbitrary face of $G$ to be the outer face, we may think of $G$ as being embedded in the plane. We now define $C_W(G)$ to be the set of connected components of $G$, considered as a graph, that are surrounded by a black face, and similarly define $C_B(G)$ as the set of connected components of $G$ that are surrounded by a white face. Note that the size of $C_W$ in this new definition is equal to the size of $C_W(H)$ for the CFPG $H$ that corresponds to $G$, and that $C_B$ and $C_W$ do not depend on the choice of the outer face.

It suffices to prove that
\begin{equation}\label{eq:want to prove}
\frac{1}{|S_n|^2}\sum_{m=3}^{n}C^m\binom{n}{m}\sum_{G \in GP_m} \sum_{k=0}^{n-m}\binom{n-m}{k} p^{-T_{G,k}} M(\bar{G},k) M(G,n-m-k)^2 em(G)= o(1)
\end{equation}
for any constant $C>0.$

We will modify \eqref{eq:want to prove}, by changing the sum over all graphs $G \in GP_m$ to a maximum over all such graphs. Denote the summand corresponding to $G$ by $h(G)$, and note that it does not depend on the labelings of the vertices in $G$. Let $GP^u_m$ denote the set of \textit{unlabeled} good embedded planar graphs on $m$ vertices. By the orbit-stabilizer Theorem,
\[
\sum_{G \in GP_m}h(G) = \sum_{G' \in GP_m^u} |\{G \in GP_m: G \text{ is a labeling of } G'\}|\cdot h(G') =
\]
\[
\sum_{G' \in GP_m^u}{\frac{ h(G')\cdot m!}{|Aut(G')|}} \leq |GP_m^u|\max_{G' \in GP_m^u}{\frac{ h(G')\cdot m!}{|Aut(G')|}} \leq c_1^m \max_{G \in GP_m}{\frac{ h(G)\cdot m!}{|Aut(G)|}}.
\]
The last inequality follows from Lemma \ref{lem:num_of_planar_graphs}.

Therefore,
\begin{align}\label{eq:max}
\frac{1}{|S_n|^2}&\binom{n}{m}\sum_{G \in GP_m} \sum_{k=0}^{n-m}\binom{n-m}{k} p^{-T_{G,k}} M(\bar{G},k) M(G,n-m-k)^2 em(G)\leq\\
\notag&\leq \frac{c_1^m}{|S_n|^2}\frac{n!}{(n-m)!}\max_{G\in GP_m}\left\{\frac{em(G)}{|Aut(G)|}\sum_{k=0}^{n-m}\binom{n-m}{k} p^{-T_{G,k}} M(\bar{G},k) M(G,n-m-k)^2\right\}.
\end{align}

We now turn to bound the expressions $M(G,k)$.
If there are $b$ black faces, then we need to partition the interior vertices into $b$ sets, one for each black face, and then we can use the bounds on $T_{m_1, ... ,m_l,k}$ proved in Lemma \ref{lem:estimation_of_triangulations_of_sphere_with_holes}.

For a face $f$, let $l_f$ be the number of its boundary components, that is, the number of connected components of $G$ it touches. Let $m_1^f, ... ,m_{l_f}^f$ be the number of edges in $f$'s boundary components.

We have
\begin{equation}\label{eq:triangulation_of_F}
M(G,k) \leq \sum_{k_1+...+k_b = k}\binom{k}{k_1, ... ,k_b}\prod_{f \in F_b}T_{k_f,m_1^f, ... ,m_{l_f}^f}.
\end{equation}
The reason for the inequality is that the $m_i^f$ edges of the $i$-th component may be supported on less than $m_i^f$ vertices. Thus, although each triangulation of the face $f$ can be lifted to a triangulation of a sphere with boundary components of sizes $m_i^f,$ the opposite may not be true, because the identification of vertices in a boundary component may create loops and multiple edges.

Write $m_f=\sum m_i^f.$ By Proposition \ref{prop:planar_estimates}, the sum of $m_f$ over all faces $f$ of one color is less than $3m$, the bound for number of edges. The same proposition shows that $b$, the number of faces of one color, is at most $m$.

Equation \eqref{eq:triangulation_of_F}, together with Lemma \ref{lem:estimation_of_triangulations_of_sphere_with_holes} implies that
\begin{equation}\label{eq:simplyfying_stuff}
M(G,k)\leq k!\gamma^k c_2^{m}\sum_{k_1+...+k_b = k}\prod_{f \in F_b}(k_f+m_f)^{l_f-\frac{7}{2}},
\end{equation}


By \eqref{eq:simplyfying_stuff}, the right hand side of \eqref{eq:max} is bounded by
\begin{align*}\label{blabla}
\frac{n!(c_2^{3} c_1)^m\gamma^{2n-2m}}{|S_n|^2}\max_{G\in GP_m} \frac{em(G)}{|Aut(G)|}&\sum_{k=0}^{n-m} \gamma^{-k} p^{-T_{G,k}} \left(\sum_{k_1+...+k_b = k}\prod_{f \in F_w}(k_f+m_f)^{l_f-\frac{7}{2}}\right)\cdot\\
&\cdot(n-m-k)!\left(\sum_{k_1+...+k_b = n-m-k}\prod_{f \in F_b}(k_f+m_f)^{l_f-\frac{7}{2}}\right)^2
\end{align*}

\begin{nn}
We say that a connected component $C$ of $G$ is \emph{lonely} if it does not separate any two connected components of $G$.
For a face $f$ of the graph $G$ and a connected embedded planar graph $T$, write $r^0_T(f)$ for the number of boundary components of $f$ which belong to a lonely connected component of $G$ isomorphic to $T.$ Write $r_T(f)$ for the number of boundary components of $f$ which belong to a connected component of $G$ isomorphic to $T,$ and $r^{\neq 0}_T(f) = r_T(f)-r^0_T(f).$
Let $T_i$ be the isomorphism type of a cycle of length $i$, let $T_{5,2}$ be the isomorphism type of two nested triangles that share a vertex, and let $T_{5,1}$ be the isomorphism type of two non-nested triangles that share a single vertex. Finally, let $T_{3,1}$ be the isomorphism type of two nested vertex-disjoint triangles.
\end{nn}

Note that renaming the isomorphic lonely components inside any face $f$ results in a graph that is isomorphic to $G$. We define $R(f):= \prod_{T\in \{T_3,T_4,T_{5,1}\}} r^0_T(f)!$ if $f$ is a white face, and $R(f):= \prod_{T\in \{T_3,T_4,T_{5,2},T_{3,1}\}} r^0_T(f)!$ if $f$ is a black face.
We have
\[\prod_{f \in F_w}R(f)\prod_{f \in F_b}R(f)\leq |Aut(G)|.\]

Write
\begin{align} \label{def_Amk}
A_{m,k}(G) := 
\gamma^{-k}p^{-T_{G,k}} \left(\sum_{\sum_{f \in F_w}k_f = k}\prod_{f \in F_w}\frac{(k_f+m_f)^{l_f-\frac{7}{2}}}{R(f)}\right)\cdot \\
\notag \quad(n-m-k)!\left(\sum_{\sum_{f \in F_b}k_f = n-m-k}\prod_{f \in F_b}\frac{(k_f+m_f)^{l_f-\frac{7}{2}}}{\sqrt{R(f)}}\right)^2
\end{align}
\begin{dfn}
We say that a function $g(n,m)$ is $o_m(1)$ if for any constant $c$, the sum $\sum_{m\geq 3} c^m g(n,m))$ tends to zero when $n$ tends to infinity.
\end{dfn}
Applying Theorem \ref{thm:Tutte} to estimate $|S_n|$, it will be enough to prove that $(\frac{n^7}{n!}\sum_{k=0}^{n-m}A_{m,k}(G)) \cdot em(G)$ is bounded uniformly on $GP_m$ by some function $g(n,m)=o_m(1).$

\subsection{Step four: Reduction to flat graphs}\label{subsec4}
We now define an operation on planar graphs which simplifies the graph, at the cost of increasing $A_{m,k}$ by a controlled quantity.
\begin{dfn}
Components of type $T_3,T_4$ or $T_{5,1}$ that are contained in a white face, and components of type $T_3,T_4,T_{5,2}$, as well as pairs of triangles of type $T_{3,1}$ contained in a black face, are called \emph{exceptional components}.
Consider $G\in P_m,$ and let $f_1,f_2$ be two faces of the same color, such that $f_1$ has at least two boundary components.
By choosing $f_1$ to be the outer face, we may think of $G$ as a planar graph.

\begin{figure}
 \centering
 \includegraphics[width=130
 mm,height=55mm]{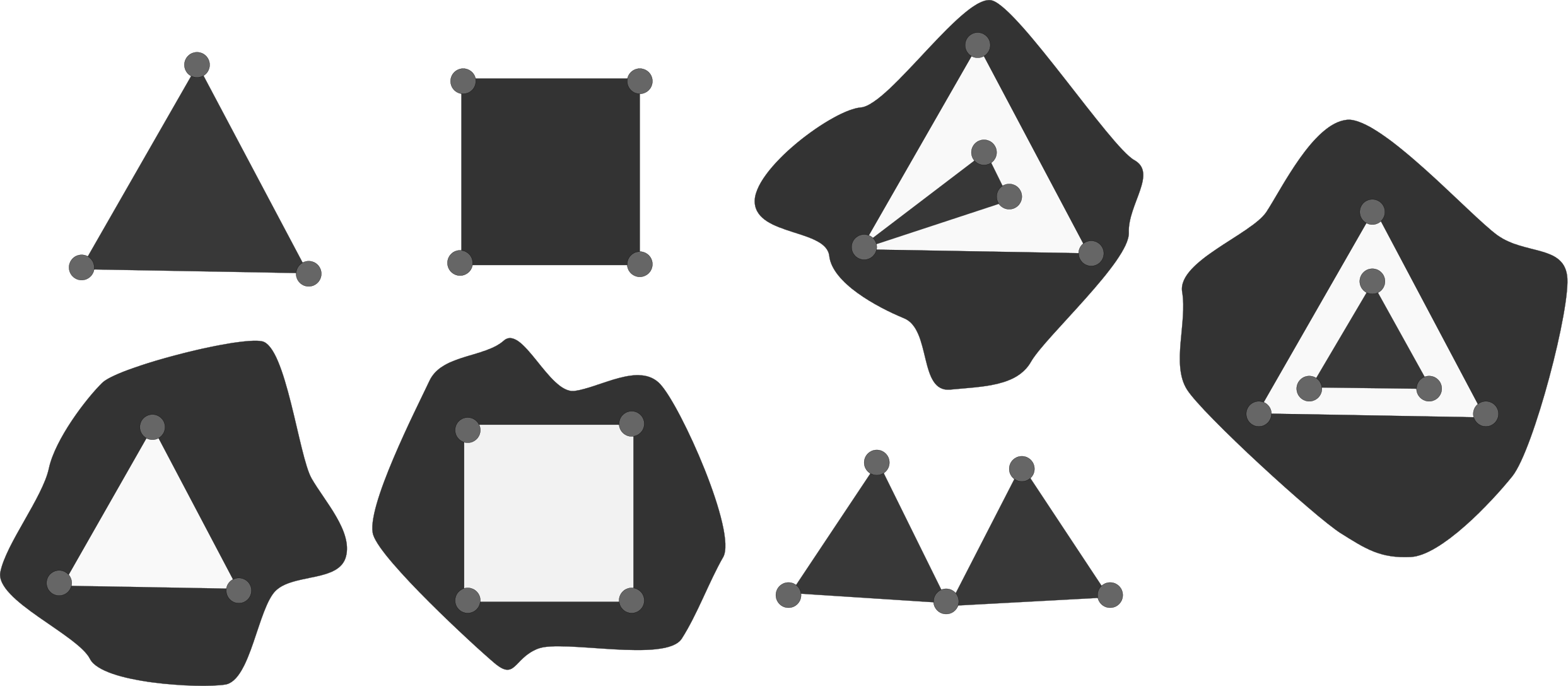}
 \caption{These are the 7 types of exceptional components. They are the components that contribute to the automorphism terms, and that receive special treatment in the flattening process}
\end{figure}

If $f_2$ does not belong to an exceptional component, we define a \emph{flattening} from $f_2$ to $f_1$ as the operation of taking all of the connected components of $G$ surrounded by $f_2$, and moving them, together with the parts of $G$ in the planar region they bound, to the interior of $f_1$. If $f_2$ is exceptional, all of the connected components of $G$ surrounded by $f_2$, except for one component, are moved to $f_1$, together with the parts of $G$ in the planar region they bound. If $f_2$ surrounds several components, not all of whom are triangles, then one of the non-triangular components remains in $f_2$. In particular, if $f_2$ is a white triangle that surrounds a black triangle as well as a non-triangular component, then we leave a non-triangular component in $f_2$.

If $f_2$ is a white triangle and also a connected component of size three that surrounds more than one component, all of whom are triangles, then we view $f_2$ together with one of the triangles it surrounds as a component of type $T_{3,1}$. In this case, we move all but two of the triangles $f_2$ surrounds to $f_1$.


We say that a graph $G \in P_m$ is \emph{flat} if $G$ has a white face $f_w$ and a black face $f_b$ such that
\begin{itemize}
\item The faces of $G$ surround at most two components, except possibly $f_w$ and $f_b$.
\item If $f \notin \{f_b,f_w\}$ is a face of $G$ that surrounds two components, then $f$ is a white triangle surrounding two black triangles.
\item The remaining faces that surround a component belong to exceptional components.
\end{itemize}
\end{dfn}
%
%

\begin{lem}\label{lem:effect of flattening}
For any constant $\delta>0$ and $G \in GP_m$, there exists a flat graph $G'$ that can be obtained from $G$ by a sequence of flattenings, which satisfies
\[
A_{m,k}(G')\geq \tilde\CC_{\delta}^{-m} \cdot (1-\delta)^k \cdot A_{m,k}(G),
\]
for some constant $\tilde\CC_\delta>1$ that depends only on $\delta$.
%
%
\end{lem}
The proof appears in the appendix.

\begin{prop}\label{prop:good_flat}
A flat graph obtained by the flattening process applied to a good graph satisfies
\begin{equation}\label{eq:good_flat}m-2\lozenge \geq m/100\end{equation}
\end{prop}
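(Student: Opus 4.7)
My plan is to prove the two displayed inequalities in sequence, using the first as the arithmetic backbone for the second.

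For the first inequality, the key structural observation is that the flattening process only relocates whole components (together with their interior structure) from one face to another of the same color, so the underlying abstract graph is unchanged and the color of the face surrounding each component is preserved; in particular $|C_W(G')| = |C_W(G)|$ and $|C_B(G')| = |C_B(G)|$. Since $G$ is good we have $\lozenge(G) \le (3m + 2|C_W|)/7$, so it remains to bound by $|C_B|$ the number of bananas newly created by the flattening. A new banana is a face that was a $4$-gon in $G$ with extra interior boundary components but becomes a simple $4$-gon satisfying the special-pair condition once its interior is evacuated. For each new white banana at least one component of $C_B$ was moved out of its interior during flattening, so assigning each new banana a canonical evacuated component (say, the minimum under some fixed ordering) yields an injection from new white bananas into $C_B$, well-defined because the interiors of distinct new bananas were disjoint in $G$. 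New black bananas can be handled by a parallel argument, together with the observation that the exceptional types $T_3, T_4, T_{5,1}, T_{5,2}, T_{3,1}$ forbid most black $4$-gons from shedding all of their components after flattening; combining this with the asymmetry of the goodness bound gives $\lozenge(G') \le \lozenge(G) + |C_B|$, which is the first inequality.

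For the second inequality, applying the first yields
\[
m - 2\lozenge \;\ge\; m - \frac{6m + 4|C_W|}{7} - 2|C_B| \;=\; \frac{m - 4|C_W| - 14|C_B|}{7},
\]
so it suffices to establish the weighted bound $2|C_W| + 7|C_B| \le m/5$. This strictly exceeds what follows from the generic count $|C_W|+|C_B| \le m/3$ of Proposition~\ref{prop:planar_estimates}, so the flatness structure has to be invoked directly. In a flat graph, outside $f_w$ and $f_b$ the only faces surrounding more than one component are white triangles containing two black triangles, and the only other faces surrounding a component belong to exceptional components of types $T_3, T_4, T_{5,1}, T_{5,2}, T_{3,1}$. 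This forces every component of $C_W \cup C_B$ lying outside $f_w, f_b$ to sit inside a specific, vertex-consuming exceptional structure, so making either $|C_W|$ or $|C_B|$ large costs vertices that cannot be reused for the other. The main obstacle I expect is this final accounting: tracking the vertex budget consumed by each contribution to $|C_W|$ versus $|C_B|$ across the exceptional types and the two collecting faces, and extracting from it the precise weighted inequality $2|C_W|+7|C_B|\le m/5$. Once this is done, the second inequality follows by direct arithmetic from the first.
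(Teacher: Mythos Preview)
Your treatment of the first inequality is essentially the paper's: flattening only relocates components inside same-colored faces, so $m$, $|C_W|$, $|C_B|$ are preserved, and each new extra (white) banana arises from evacuating a component of $C_B$ out of a square white face, giving the bound $\lozenge(G')\le \lozenge(G)+|C_B|$. (Your discussion of ``new black bananas'' is unnecessary here, since $\lozenge$ counts white bananas only.)

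The second half, however, has a genuine gap. From the first inequality you derive
\[
m-2\lozenge \;\ge\; \frac{m-4|C_W|-14|C_B|}{7},
\]
and then claim it suffices to prove $2|C_W|+7|C_B|\le m/5$. But this inequality is \emph{false} in general, already for the simplest flat good graphs: a single triangle has $m=3$ and $(|C_W|,|C_B|)\in\{(1,0),(0,1)\}$, so $2|C_W|+7|C_B|\in\{2,7\}$, both far exceeding $m/5=3/5$. No amount of structural analysis of exceptional components will establish a false inequality. The problem is that you are trying to deduce $m-2\lozenge\ge 3m/35$ from the first inequality \emph{alone}, and that is simply not enough information.

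The paper's argument supplies the missing ingredient: a second, independent inequality
\[
m-2\lozenge \;\ge\; 3\bigl(|C_W|+|C_B|\bigr),
\]
which holds because deleting the two non-special vertices of each extra banana does not disconnect any component (the special pair remains connected), leaving $m-2\lozenge$ vertices distributed among the same $|C_W|+|C_B|$ components, each of size at least $3$. Combining this with the (weakened) first inequality $|C_W|+|C_B|\ge \lozenge-3m/7$ gives $m-2\lozenge\ge 3\lozenge-9m/7$, i.e.\ $16m/7\ge 5\lozenge$, hence $m-2\lozenge\ge 3m/35$. The key idea you are missing is precisely this second bound, which lets you eliminate $|C_W|+|C_B|$ between two inequalities rather than trying to bound it outright.
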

\begin{proof}
In a good graph $\frac{24m}{49}\geq\lozenge.$ The flattening process may add at most $C_B$ extra bananas, since any new extra banana is the result of taking some component counted in $C_B$ from a square face, thus \[|C_B|\geq \lozenge-\frac{24m}{49}.\]
On the other hand \[m-2\lozenge \geq 3|C_B|,\] because removing the extra bananas does not change connectivity, and each of the disjoint components counted by $C_B$ must have at least three vertices.
Thus, \[m-2\lozenge\geq 3(\lozenge-24m/49)\Leftrightarrow 121m/49\geq 5\lozenge\Leftrightarrow m-2\lozenge\geq 3m/245,\]
and therefore
\[m-2\lozenge\geq m/100.\]
\end{proof}

Theorem \ref{thm:main} would therefore follow if we could prove that for any flat graph $G$ satisfying \eqref{eq:good_flat} there holds \[\frac{n^7}{n!}\sum_{k=0}^{n-m} A_{m,k}(G) \cdot em(G)\leq g(n,m), \text{ for a function }g(n,m)=o_m(1).\]

\subsection{Step five: Analysis for flat graphs}\label{subsec5}

We start with two simple analytic results, whose proofs are in the appendix.
The following propositions will be useful for understanding $A_{m,k}$.

\begin{prop}\label{prop:no_division}
Suppose $m_i,l_i,~i=1,\ldots,w$ are positive integers. Assume that $m_i\geq 3$ for all $i$, that $l_i\leq 3$ for $i\geq 2$, and that $l_i=3$ for $a$ values of $i$.
The following inequalities hold for
\[
L(m,l)=\sum_{\{(k_i)_{i=1}^w:~k_i\geq 0,~\sum k_i=k\}}\prod_{i=1}^w(k_i+m_i)^{l_i-\frac{7}{2}}.
\]
\begin{enumerate}
\item For all positive $\delta$ there exists a constant $C_\delta$ such that
\[
L(m,l)\leq C_\delta^{\sum m_i}(1+\delta)^k(k+m_1)^{l_1-\frac{7}{2}}.
\]
\item There exists a constant $c_6$ such that
\[
L(m,l)\leq c_6^{\sum m_i}(k+m_1)^{l_1-\frac{7}{2}+\frac{a}{2}} .
\]
\end{enumerate}

\end{prop}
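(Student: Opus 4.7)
I would prove both parts by induction on $w$; the base case $w=1$ is trivial after requiring $C_\delta,c_6\geq 1$.

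For part (a), the starting observation is that for every $i\geq 2$, since $l_i-\tfrac72\leq-\tfrac12$ and $m_i\geq 3$, the power series $h_i(x):=\sum_{j\geq 0}(j+m_i)^{l_i-7/2}x^j$ converges at $x=1/(1+\delta)$ to a value bounded by a constant $D_\delta$ (by crude comparison with a geometric series). Pulling the $k_1$ summation out,
\[
L(m,l)=\sum_{k_1=0}^k(k_1+m_1)^{l_1-7/2}L'(k-k_1),\qquad L'(k'):=[x^{k'}]\prod_{i\geq 2}h_i(x),
\]
so $L'(k')\leq(1+\delta)^{k'}\prod_{i\geq 2}h_i(1/(1+\delta))\leq D_\delta^{w-1}(1+\delta)^{k'}$. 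It then remains to bound $\sum_{k_1=0}^k(k_1+m_1)^{l_1-7/2}(1+\delta)^{-k_1}$ by a constant depending on $m_1,\delta$ times $(k+m_1)^{l_1-7/2}$. If $l_1\geq 4$, monotonicity gives $(k_1+m_1)^{l_1-7/2}\leq(k+m_1)^{l_1-7/2}$ and the geometric series in $k_1$ sums to $(1+\delta)/\delta$. If $l_1\in\{1,2,3\}$, the sum is at most $m_1^{l_1-7/2}(1+\delta)/\delta$, and the ratio $m_1^{l_1-7/2}/(k+m_1)^{l_1-7/2}=(1+k/m_1)^{|l_1-7/2|}\leq(1+k)^{7/2}$ is absorbed into $(1+\delta)^k$ by starting the argument with a slightly smaller $\delta'<\delta$ and using that any polynomial is dominated by any exponential. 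Finally $D_\delta^{w-1}$ is absorbed into $C_\delta^{\sum m_i}$ via $w\leq\sum m_i/3$.

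For part (b), I would again induct on $w$ by splitting off $k_w$. Writing $\alpha':=l_1-\tfrac72+a'/2$ for the inductive exponent, where $a'$ is the number of $l_i=3$ with $i<w$, the inductive hypothesis yields
\[
L(m,l)\leq c_6^{\sum_{i<w}m_i}\sum_{k_w=0}^k(k_w+m_w)^{l_w-7/2}(k-k_w+m_1)^{\alpha'}.
\]
When $l_w\leq 2$, so $l_w-\tfrac72\leq-\tfrac32$, the sum $\sum_{k_w\geq 0}(k_w+m_w)^{l_w-7/2}$ is a uniformly bounded constant; combined with $(k-k_w+m_1)^{\alpha'}\leq 2^{7/2}(k+m_1)^{\alpha'}$ (using $|\alpha'|\leq 7/2$ when $\alpha'<0$, and monotonicity when $\alpha'\geq 0$), this produces the bound at exponent $\alpha=\alpha'$, as required. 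When $l_w=3$, I split the inner sum at $k_w=k/2$. On $[0,k/2]$, the same comparison gives $(k-k_w+m_1)^{\alpha'}\leq 2^{7/2}(k+m_1)^{\alpha'}$, and $\sum_{k_w=0}^{k/2}(k_w+m_w)^{-1/2}\leq 2\sqrt{k/2+m_w}$; after absorbing $\sqrt{m_w}\leq e^{m_w/2}$ into $c_6^{m_w}$, this contributes an order $(k+m_1)^{\alpha'+1/2}$ term. On $(k/2,k]$, after the substitution $j=k-k_w$, the factor $(k-j+m_w)^{-1/2}$ is uniformly $\leq\sqrt{2/(k+m_w)}$, while the residual sum $\sum_{j=0}^{k/2}(j+m_1)^{\alpha'}$ is bounded by elementary integral comparison; the resulting discrepancy $\sqrt{(k+m_1)/(k+m_w)}$ is absorbed into $c^{m_w}$ by separately handling $m_w\leq m_1$ (using $\sqrt{m_1/m_w}\leq\sqrt{m_1}\leq e^{m_1/2}$) and $m_w>m_1$ (where the ratio is at most $1$). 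Since $a=a'+1$ in this branch, this yields the target $(k+m_1)^{\alpha'+1/2}=(k+m_1)^{l_1-7/2+a/2}$.

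\textbf{Main obstacle.} The main technical difficulty lies in the $l_w=3$, $k_w>k/2$ subcase of part (b): when $\alpha'$ is negative, the factor $(k-k_w+m_1)^{\alpha'}$ is maximized near $k_w=k$, so the tail estimate is delicate and has to be split according to the sign of $\alpha'+1$, with a logarithmic anomaly at $\alpha'=-1$ absorbed via $\ln(k+m_1)\leq(k+m_1)^{1/2}$. The residual $\sqrt{(k+m_1)/(k+m_w)}$ factor forces the case split on the relative sizes of $m_1$ and $m_w$, and this is precisely where the $(k+m_1)^{1/2}$ slack in the stated bound is consumed.
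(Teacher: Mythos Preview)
Your argument for part (a) is correct and close in spirit to the paper's: the paper simply bounds every factor $(k_i+m_i)^{l_i-7/2}$ with $i\geq 2$ by $1$ and counts compositions via $\binom{k+w-1}{w-1}\leq C_\delta^w(1+\delta)^k$, whereas you package the same estimate as a generating-function coefficient bound. Either works.

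Part (b), however, has a genuine gap. In the case $l_w\leq 2$, $\alpha'<0$, you assert
\[
(k-k_w+m_1)^{\alpha'}\leq 2^{7/2}(k+m_1)^{\alpha'}\quad\text{for all }k_w\in[0,k],
\]
but this is simply false: for $k_w$ close to $k$ the left side is $(m_1)^{\alpha'}$, which for large $k$ is much \emph{larger} than $2^{7/2}(k+m_1)^{\alpha'}$ since $\alpha'<0$. The justification ``$|\alpha'|\leq 7/2$'' is the constant one gets from the half-range estimate $k-k_w\geq k/2$, but you only make that split in the $l_w=3$ branch, not here. Once you do split at $k/2$, the tail $k_w>k/2$ gives a term of order $(k+m_w)^{l_w-7/2}$, and to dominate this by $(k+m_1)^{\alpha'}$ you need $l_w-7/2\leq\alpha'$, i.e.\ $l_w\leq l_1+a'/2$. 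This is exactly where the hypothesis ``$l_1$ is the largest $l_i$'' enters; the paper's statement is implicitly using it (its proof goes through the auxiliary Proposition with $l_1\geq\cdots\geq l_w$), and without it the inequality in (b) is actually false, e.g.\ for $w=2$, $l_1=1$, $l_2=2$.

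A second, more minor, issue: in the $l_w=3$ tail you absorb $\sqrt{(k+m_1)/(k+m_w)}\leq\sqrt{m_1/m_w}$ ``into $c^{m_w}$'' via $\sqrt{m_1}\leq e^{m_1/2}$, but that is a power of $m_1$, not $m_w$; if you pay it at every inductive step you accumulate $m_1^{O(w)}$, which is not controlled by $c_6^{\sum m_i}$ when $w$ is large and $m_1$ is moderate.

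The paper sidesteps both problems by running the induction in the opposite direction: after sorting $l_1\geq l_2\geq\cdots$, it merges indices $1$ and $2$ at each step (replacing $l_1$ by $l_1+l_2-5/2$ when $l_2\geq 3$, and by $l_1$ otherwise), so the running exponent is always that of the current heaviest factor and never drops below $l_w-7/2$. All the sign casework is then confined to a single $w=2$ lemma, and no cross-term like $\sqrt{m_1/m_w}$ ever appears.
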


\begin{prop}\label{prop:estimation_for_1-eps_terms}
Fix $0<\varepsilon<1.$ Then there exists a constant $C_\varepsilon$ such that for any integers $m,l,x$ with $-10\leq l\leq m,~3\leq x\leq m$ and any positive $k,$
\[(1-\varepsilon)^k(k+x)^{l} \leq (1-\varepsilon)^{\frac{k}{2}}C_\varepsilon^m m^l.\]
\end{prop}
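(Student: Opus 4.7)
The plan is to divide both sides by the positive factor $(1-\varepsilon)^{k/2}$, reducing the target inequality to $(1-\varepsilon)^{k/2}(k+x)^l\le C_\varepsilon^m m^l$. Writing $\alpha:=-\log(1-\varepsilon)>0$, I would prove this by case analysis on the sign of $l$ and, when $l\ge 0$, on the relative size of $k$ and $m$.

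When $l<0$, the bounds $|l|\le 10$ and $k+x\ge 3$ give $(k+x)^l\le 3^l$ and $(1-\varepsilon)^{k/2}\le 1$, so the left hand side is at most $3^l$ and the desired inequality reduces to $(m/3)^{|l|}\le C_\varepsilon^m$, which holds for some universal $C$ since $m^{10}=o(C^m)$ for any $C>1$. When $l\ge 0$ and $k\le m$, the inequalities $k+x\le 2m$ and $l\le m$ give $(k+x)^l\le 2^l m^l\le 2^m m^l$ directly.

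The one case with any content is $l\ge 0$ and $k>m\ge x$, where $k+x\le 2k$ and I would estimate $(1-\varepsilon)^{k/2}(k+x)^l\le 2^l(1-\varepsilon)^{k/2}k^l$. Elementary one-variable calculus applied to $\tfrac{k}{2}\log(1-\varepsilon)+l\log k$ shows that $(1-\varepsilon)^{k/2}k^l$ attains its maximum over $k>0$ at $k^{*}=2l/\alpha$, with value $(2l/(\alpha e))^l$. Hence
\[
(1-\varepsilon)^{k/2}(k+x)^l\le\left(\tfrac{4l}{\alpha e}\right)^l=\left(\tfrac{4}{\alpha e}\right)^l l^l\le\left(\tfrac{4}{\alpha e}\right)^l m^l\le\bigl(\max(1,\tfrac{4}{\alpha e})\bigr)^m m^l,
\]
using $l\le m$ twice in the last two steps. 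Taking $C_\varepsilon$ to be the maximum of the three constants produced in the three cases (each depending only on $\varepsilon$) proves the claim.

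The ``hard part'' is really just the one-variable optimization in the third case; the key trick is to exploit the hypothesis $l\le m$ to convert the resulting $l^l$ into $m^l$, and then absorb the leftover $(4/(\alpha e))^l$ into a constant raised to the power $m$. This is precisely why the constraint $l\le m$ appears in the proposition.
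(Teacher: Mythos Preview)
Your proof is correct and follows essentially the same approach as the paper: split on the sign of $l$, and for $l\ge 0$ use $l\le m$ together with the fact that exponential decay in $k$ dominates polynomial growth. The only cosmetic difference is that the paper handles the $l\ge 0$ case without your $k\le m$ versus $k>m$ split, by writing $\alpha=k/m$ and observing directly that $(1-\varepsilon)^{\alpha/2}(1+\alpha)$ is bounded by a constant $C'_\varepsilon$, whence $(1-\varepsilon)^{k/2}(k+x)^l\le((1-\varepsilon)^{\alpha/2}(1+\alpha))^m m^l\le (C'_\varepsilon)^m m^l$; this is the same one-variable boundedness you establish via explicit optimization.
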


Let $G$ be a flat graph satisfying \eqref{eq:good_flat}. We call the white face which touches the maximal number of connected components of $G$ the \emph{distinguished white face}. If there is no unique maximal white face, choose any maximal white face $f$ with minimal $m_f$ to be the distinguished one. We denote the distinguished white face by $f_w$, and similarly define the distinguished black face $f_b$. By choosing the distinguished black face $f_b$ to be the outer face, we may assume that $G$ is planar. For any face $f$, any connected component of $G$ which touches it and does not separate it from the distinguished face will be called a \emph{hole}. We also say that the corresponding connected component of $G$ is \emph{surrounded} by $f$. A \emph{white (black) hole} is a hole surrounded by a white (black) face.

Let $\delta=\varepsilon/10$. Note that in a flat graph, the only black face $f$ with $l_f \geq 3$ is the distinguished face $f_b$. Therefore, Proposition \ref{prop:no_division} allows us to bound $A_{m,k}(G)$ for a flat graph $G$ by
\begin{equation}\label{eq:endless}
c_3^m(1+\delta)^{2k}\gamma^{-k} p^{-T_{G,k}} \frac{(k+m_w)^{l_w-\frac{7}{2}}}{R(f_w)}(n-m-k)!\frac{(n-k-\frac{m}{2})^{2l_b(G)-7}}{R(f_b)},
\end{equation}
where $c_3$ is a new constant, and $l_w=l_{f_w},m_w=m_{f_w},l_b=l_{f_b},m_b=m_{f_b}$.
Here we used Item (a) in order to bound the term corresponding to the white faces, and Item (b) to bound the term corresponding to the black ones. We can write $(n-k-\frac{m}{2})$ instead of $n-k-m+m_b$ because the ratio $(n-k-m+m_b)/(n-k-m/2)$ is bounded by a constant.

Note that as by Proposition \ref{prop:num_of_triangles}, $T_{G,k} \geq 2k$, we have
\[
(1+\delta)^{2k} p^{-T_{G,k}} = (1+\delta)^{2k} (1-\varepsilon)^{T_{G,k}} \left(\frac{e}{\gamma n}\right)^{-T_{G,k}} \leq (1-\varepsilon)^k \left(\frac{e}{\gamma n}\right)^{-T_{G,k}}.
\]

Therefore, by plugging in the value of $p$ and applying Stirling's approximation, we can upper bound \eqref{eq:endless}
\begin{align}\label{eq:111}
c_3^mn! \left(\frac{e}{\gamma n}\right)^{k+m                                                                                                                                                                                                                                                                                                                                                                                                                                                                                                                                                                                                                                                                                                                                                                                                  -T_{G,k}/2} (1-\varepsilon)^{k} \frac{(k+m_1)^{l_w-\frac{7}{2}}}{R(f_w)}\left(\frac{n-m-k}{n}\right)^{n-m-k}\frac{\left(n-k-\frac{m}{2}\right)^{2l_b-7}}{R(f_b)}.
\end{align}

In order to estimate $k+m-T_{G,k}/2,$ we need the following proposition:

\begin{prop}
\label{prop:k+m-F/2}
Let $G \in P_m$ with $r$ connected components. There holds:
\[
m+k-T_{G,k}/2 = \frac{1}{2}(m+r+W-B+3)- |C_B| . 
\]
\end{prop}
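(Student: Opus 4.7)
The plan is to deduce the identity via straightforward bookkeeping using Proposition \ref{prop:num_of_triangles}(b) and Euler's formula, and then to identify $|C_B|$ combinatorially via the face-component incidence tree.

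First I would sum the triangle counts over white faces. For each $f \in F_w$ with $l_f$ boundary cycles of total length $m_f$ and $k_f$ interior vertices, Proposition \ref{prop:num_of_triangles}(b) gives $2k_f + m_f + 2l_f - 4$ triangles. Since the $2$-coloring of $P_m$ is proper, each edge of $G$ belongs to the boundary of exactly one white face, so $\sum_{f \in F_w} m_f = |E|$. Summing yields
\[
T_{G,k} \;=\; 2k + |E| + 2\sum_{f \in F_w} l_f - 4W.
\]
Plugging in Euler's identity $|E| = m + W + B - r - 1$ (from Proposition \ref{prop:planar_estimates}) and simplifying algebraically, one obtains
\[
m + k - \frac{T_{G,k}}{2} \;=\; \frac{m + r + W - B + 1}{2} - \sum_{f \in F_w}(l_f - 1).
\]
Comparing with the target right-hand side, it therefore suffices to establish the combinatorial identity
\[
|C_B| \;=\; 1 + \sum_{f \in F_w}(l_f - 1).
\]

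To prove this identity I would consider the bipartite \emph{face-component incidence graph} $\mathcal{T}$ whose vertices are the faces and connected components of $G$, with an edge between $f$ and $C$ for every boundary component $C$ of $f$. Counting gives $|F|+r$ vertices and $\sum_f l_f = |E|+2r-m = |F|+r-1$ edges (by Euler again), and $\mathcal{T}$ is connected because the sphere is; hence $\mathcal{T}$ is a tree. Rooting $\mathcal{T}$ at a white outer face $f_0$, each non-root face $f$ has $l_f - 1$ child components and the root has $l_{f_0}$ children. Under this rooting, the components of $C_B$ are precisely those whose parent in $\mathcal{T}$ is white, so
\[
|C_B| \;=\; l_{f_0} + \sum_{f \in F_w,\ f \neq f_0}(l_f - 1) \;=\; 1 + \sum_{f \in F_w}(l_f - 1),
\]
which is the desired identity.

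The main obstacle I expect is reconciling this tree-counting formula with the \emph{intrinsic} definition of $|C_B|$ stated in the paper (which is asserted to be independent of the outer face). The ``$+1$'' naturally corresponds to the outermost component, which is assigned to $C_B$ precisely when the chosen outer face is white; one must check that any two choices of white outer face give the same count (a standard re-rooting argument in the tree $\mathcal{T}$), so the definition is indeed well-posed. Everything else reduces to the routine Euler/Proposition \ref{prop:num_of_triangles} computation carried out above.
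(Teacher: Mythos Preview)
Your proof is correct and follows essentially the same route as the paper's: sum the triangle count from Proposition~\ref{prop:num_of_triangles}\ref{it:num_in_disc_with_holes} over the white faces, plug in Euler's formula, and reduce to the identity $|C_B| = 1 + \sum_{f\in F_w}(l_f-1)$. The paper asserts this last identity without comment, whereas you supply an explicit proof via the face--component incidence tree (in the spirit of Proposition~\ref{prop:planar_estimates}\ref{it:it:estimation_of_l_f_sum}); your discussion of the ``$+1$'' and the well-posedness under re-rooting is a point the paper simply glosses over.
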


\begin{proof}
By Proposition \ref{prop:num_of_triangles}, we have
\begin{align*}
T_{G,k} & = \sum_{f\in F_w} {\left(2 k_f + 2 l_f - 4 + \sum_{i=1}^{l_f}{m^f_i}\right)}\\
& = 2k + E - 2W + 2 \sum_{f\in F_w}{(l_f - 1)}.
\end{align*}
By Euler's formula for planar graphs, we have $F-E+V = r+1$, where $F=B+W$ is the total number of faces and $V = m$ is the number of vertices. This implies that $ E/2 = \frac12 (B+W+m-r-1)$, and so we have:
\begin{align*}
k+m-T_{G,k}/2 & = k+m - k - E/2 + W - \sum_{f\in F_w}{(l_f - 1)} \\
 & = W + m - \frac{1}{2}(B+W+m-r-1) - \sum_{f\in F_w}{(l_f - 1)} \\
 & = \frac{1}{2}(m+r+W-B+3)-|C_B|.
\end{align*}
\end{proof}


In particular, $(k-T_{G,k}/2)$ is bounded by a constant times $m$, and does not depend on $k$.
Therefore, by Proposition \ref{prop:estimation_for_1-eps_terms} we can bound expression \eqref{eq:111} by
\begin{equation}\label{eq:when will it end}
n! c_\varepsilon^m \frac{ (1-\varepsilon)^{k}}{n^{m+k-T_{G,k}/2}}\left(\frac{n-m-k}{n}\right)^{n-m-k}  \frac{m^{l_w-\frac{7}{2}}}{R(f_w)}\frac{\left(n-k-\frac{m}{2}\right)^{2l_b-7}}{R(f_b)},
\end{equation}
where $\CC_\varepsilon$ depends only on $\varepsilon.$

Using Proposition \ref{prop:estimation_for_1-eps_terms} again, we have

\[
\sum_{k=0}^{n-m}\frac{(1-\varepsilon)^{k}}{n^{m+k-T_{G,k}/2}}\left(\frac{n-m-k}{n}\right)^{n-m-k}  \frac{m^{l_w-\frac{7}{2}}}{R(f_w)}\frac{\left(n-k-\frac{m}{2}\right)^{2l_b-7}}{R(f_b)} \leq
\]
\begin{align*}
\sum_{k=0}^{\min\{n-m,2\varepsilon n\}}\frac{(1-\varepsilon)^{k}}{n^{m+k-T_{G,k}/2}}\left(\frac{n-m-k}{n}\right)^{n-m-k}  \frac{m^{l_w-\frac{7}{2}}}{R(f_w)}\frac{\left(n-k-\frac{m}{2}\right)^{2l_b-7}}{R(f_b)}+\\
\qquad+\CC_{\varepsilon}^m\sum_{k>\min\{n-m,2\varepsilon n\}}^{n-m} \frac{(1-\varepsilon)^{k}}{n^{m+k-T_{G,k}/2}}(1-\varepsilon)^{\frac{n-k-m}{2}} \frac{m^{l_w-\frac{7}{2}}}{R(f_w)}\frac{m^{2l_b-7}}{R(f_b)}
\end{align*}


Now,
\[
\sum_{k=0}^{\min\{n-m,2\varepsilon n\}}\left(\frac{n-m-k}{n}\right)^{n-m-k}\leq
\sum_{k=0}^{2\varepsilon n}e^{-(m+k)(n-m-k)/n}=O(1),
\]
Thus the contribution of the sum over the first $\min\{n-m,2\varepsilon n\}$ terms is upper bounded by
\[
n^{-\left(\frac{m+r+W-B+3}{2}-|C_B|\right)}\frac{m^{l_w-\frac{7}{2}}}{R(f_w)} \cdot \frac{\left(\alpha_\varepsilon n\right)^{2l_b-7}}{R(f_b)},
\]
for a constant $\alpha_\varepsilon$ which depends only on $\varepsilon.$

For $n-m\geq 2\varepsilon n$, there holds
\[\sum_{k>\min\{n-m,2\varepsilon n\}}^{n-m}(1-\varepsilon)^{\frac{n-m-k}{2}+k} \cdot m^{2l_b-7}\leq
n(1-\varepsilon)^{\varepsilon n}m^{2l_b-7}.\]
For any fixed $\varepsilon<1,~n$ large enough, $n(1-\varepsilon)^{\varepsilon n}<1$. Hence, for all $n$ larger than some $N=N_\varepsilon$ we have
\[n(1-\varepsilon)^{\varepsilon n}m^{2l_b-7}\leq m^{2l_b-7} \leq n^{2l_b - 7}.\]

To summarize, we can upper bound $\frac{n^7}{n!}\sum_{k=0}^{n-m}A_{m,k}(G)$ by
\[n^7c_\varepsilon^m\sum_{k=0}^{n-m}\frac{ (1-\varepsilon)^{k}}{n^{m+k-T_{G,k}/2}}\left(\frac{n-m-k}{n}\right)^{n-m-k}  \frac{m^{l_w-\frac{7}{2}}}{R(f_w)}\frac{\left(n-k-\frac{m}{2}\right)^{2l_b-7}}{R(f_b)}\leq\]

\[\leq (\CC^1_\varepsilon)^m\left(\frac{1}{n}\right)^{\frac{m+r+W-B+3}{2}-|C_B|-2l_b}\frac{m^{l_w-\frac{7}{2}}}{R(f_w)R(f_b)}=\]
\[= (\CC^1_\varepsilon)^m\left(\frac{m}{n}\right)^{d'}\frac{m^{-d''}}{R(f_w)R(f_b)}\]
where $\CC^1_\varepsilon$ is a constant, and $d',d''$ are defined as follows.

\[d'(G) := \frac{m+r+W-B+3}{2}-|C_B|-2l_b,\]
\[d''(G) := 5+\frac{m+r+W-B}{2}-|C_B|-l_w-2l_b,\]

Let
\[B_m(G)=
\left(\frac{m}{n}\right)^{d'}\frac{m^{-d''}}{R(f_w)R(f_b)}.\]

\subsection{Last step: Analysis of lonely components}\label{subsec6}
It remains to show that $B_m(G) \cdot em(G)$ is bounded uniformly on the set of flat graphs obtained from good planar graphs with $m$ vertices, by $g(n,m)=o_m(1).$ This will follow from a similar estimation for $B_m(G) \cdot em'(G)$,
 where
\[em'(G) := c_5^{-m}m^{|C_W|}\prod_{C\in C_W \cup C_B,~\text{of size at least 1000}}|Em(C)|,
\]
where $c_5$ is the constant of Lemma \ref{lem:Ran}.
Indeed, for smaller components we can bound the number of embeddings by a constant to the power of the size of the component. The division by $c_5^m$ does not affect our attempt, by the definition of $o_m(1)$.

Let $d(G) = d''(G)-\log_m em'(G)$, and recall that we are considering a flat graph satisfying \eqref{eq:good_flat}.
We would like to understand $d$ and $d'$. We will see that $d' \geq m/6$, and ideally we would like to show that $d$ is at least a constant times $m$. When this does not hold, we will see that the automorphism terms $R(f_w)$ and $R(f_b)$ compensate.

Starting with the empty graph, we add the components of $G$ one by one, and keep track of the contributions to $d$ and $d'$ after each such step. For a component $C$ of $G$, let $m(C)$ and
$\lozenge(C)$ be the number of its vertices and extra white bananas respectively. We will show that the contribution of a component $C$ to $d$ is at least $\frac{m(C)-2\lozenge(C)}{20}$, if it is either not exceptional, or an exceptional component that is not contained in one of the distinguished faces $f_w,f_b$.

The exceptional components contained in the distinguished faces may have a negative or zero contribution to $d$, and so they must be dealt with separately. In the case of lonely exceptional components, we can make up for their negative contributions by using the automorphism terms. On the other hand, any non-lonely exceptional component $C$ that is contained in a distinguished face must surround an additional component $C'$. We will show that the combined contribution of $C$ and $C'$ is at least $(m(C)+m(C')-2\lozenge(C'))/20$.

Let $G_1,\ldots, G_r$ be the connected components of $G$, ordered so that $G_1$ touches the distinguished white face $f_w$, and for every $i$ the component $G_i$ does not separate $G_1$ and $G_j$ with $j<i$.
For our purposes, two nested triangles contained in $f_b$ are treated as a single component. If a triangle $t$ in $f_b$ contains two triangles, then we choose one arbitrarily, and together with $t$ it forms a single component.

 We have
\[
d = 5+\frac{m-r+W-B}{2}-l_w-2l_b - \sum_{i:m(G_i) \geq 1000}{\log_m(|Em(G_i)|)}.
\]

Let $G_{\leq i}$ be the embedded graph formed by the connected components $G_1,\ldots, G_i$, whose faces, are colored in the unique way for which the face $f_w$ is colored white.
We now define $\Delta_i$ and $\Delta_i'$ to be the contributions of the component $G_i$ to $d$ and $d'$ respectively. That is, $\Delta_1 = d(G_{\leq 1}), \Delta_1'=d'(G_{\leq 1})$, $\Delta_i = d(G_{\leq i}) - d(G_{\leq i-1})$, and $\Delta_i' = d'(G_{\leq i}) - d'(G_{\leq i-1})$.

In order to understand $\Delta_i$ for different types of components $G_i$,
we first partition the components of $G$ into six different sets. We define
\begin{align*}
& R=\{i:G_i \text{ is a lonely triangle contained in $f_b$ or $f_w$}\} \\
& S=\{i:G_i \text{ is a lonely exceptional component that is not a triangle contained in $f_b$ or $f_w$}\} \\
& T=\{i:G_i \text{ is a non-lonely exceptional component contained in $f_b$ or $f_w$}\}
\end{align*}

We define an injection $\pi:T \rightarrow \{1, ... ,r\}\setminus (S \cup R \cup T)$ by choosing for each $j \in T$ an index $j$ such that the component $G_j$ is surrounded by a face of $G_i$ in $G$. Finally, let $X$ denote the indices of components $G_i$ such that $m(G) \geq 1000$, and let $Y$ be the set of all remaining indices, namely $\{1,...,r\}\setminus (R\cup S \cup T \cup \pi(T) \cup X)$.

\begin{prop} \label{prop:bounds_for_one_component}
\begin{enumerate}
\item If $i \in X$, then $\Delta_i \geq \frac{m(G_i)-2\lozenge(G_i)}{20},$ except for at most one $i$, for which $\frac{m(G_i)-2\lozenge(G_i)}{20}>\Delta_i \geq \frac{m(G_i)-2\lozenge(G_i)}{20}-2$. \label{item:first}

\item If $i \in R$, then $\Delta_i \geq -\frac{1}{2}$. \label{item:second}

\item If $i \in S$, then $\Delta_i \geq 0$. \label{item:third}

\item If $i \in Y$, then $\Delta_i \geq \frac{m(G_i)}{20}$. \label{item:fourth}

\item If $i \in T$, then $\Delta_i+\Delta_{\pi(i)} \geq \frac{m(G_i)+m(G_{\pi(i)})-2\lozenge(G_{\pi(i)})}{20}$, except for at most one $i$, for which
\[
\frac{m(G_i)+m(G_{\pi(i)})-2\lozenge(G_{\pi(i)})}{20}>\Delta_i+\Delta_{\pi(i)} \geq \frac{m(G_i)+ m(G_{\pi(i)})-2\lozenge(G_{\pi(i)})}{20}-2.
\] \label{item:fifth}

\end{enumerate}

\end{prop}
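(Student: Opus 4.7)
The plan is to prove each item by an incremental analysis of $d(G)$ as the components $G_i$ are added one by one. For a connected component $G_i$ with $v_i = m(G_i)$ vertices, creating $W_i^*$ new white and $B_i^*$ new black bounded faces, the increment reads
\[
\Delta_i = \tfrac{v_i+1+W_i^*-B_i^*}{2} - \Delta|C_B| - \Delta l_w - 2\Delta l_b - \Delta|C_W| + v_i\log_m c_5 - \Delta\log_m|Em|,
\]
where $\Delta|C_B|, \Delta|C_W| \in \{0,1\}$ record whether $G_i$ is surrounded by a white or a black face, $\Delta l_w, \Delta l_b \in \{0,1\}$ indicate placement in $f_w$ or $f_b$, and the last term equals $\log_m|Em(G_i)|$ only when $v_i \geq 1000$. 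Since $v_i\log_m c_5 = o(v_i)$ as $m \to \infty$, this term will be absorbed into the slack in each case.

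Item (a) follows from Lemma \ref{lem:embed_comp}: we have $\tfrac{v_i+W_i^*-B_i^*}{2} - \log_m|Em(G_i)| \geq (v_i - 2\lozenge(G_i))/18 - v_i\log_m c_5$, with an additional $+2$ improvement whenever $100 < v_i \leq m/2$. The $O(1)$ losses from the $\Delta|C_B|, \Delta|C_W|, \Delta l_w, 2\Delta l_b$ terms sum to at most $4$, which the $+2$ bonus absorbs while $\tfrac{1}{18}$ exceeds $\tfrac{1}{20}$ comfortably, yielding $\Delta_i \geq (v_i-2\lozenge(G_i))/20$ for every large component. The unique large component with $v_i > m/2$ (at most one such exists) lacks the bonus and loses at most $2$, matching the ``except for at most one $i$'' clause.

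Items (b), (c), and the exceptional part of (d) reduce to finite case checks over the seven exceptional types $T_3, T_4, T_{5,1}, T_{5,2}, T_{3,1}$ combined with placement in $f_w$, $f_b$, or a non-distinguished face: for each case, substituting the explicit small integers $v_i, W_i^*, B_i^*, \Delta|C_B|, \Delta|C_W|, \Delta l_w, \Delta l_b$ into the increment formula yields the claimed bound. For instance, a lonely triangle in $f_w$ has $(v_i, W_i^*-B_i^*, \Delta|C_B|, \Delta l_w) = (3,-1,1,1)$, giving $\Delta_i = 3/2 - 2 + o(1) \geq -1/2$; a lonely square in $f_w$ gives $\Delta_i = 2-2+o(1) \geq 0$; and when the exceptional component lies in a non-distinguished face the $-\Delta l_w - 2\Delta l_b$ penalty vanishes, making $\Delta_i \geq v_i/20$ immediate. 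The $T_{3,1}$ super-component is handled by first adding the outer triangle and then the inner triangle and summing the two $\Delta$'s. For the non-exceptional part of (d), Lemma \ref{lem:W-B} gives $|W_i^* - B_i^*| \leq (v_i-1)/2$; combined with the fact that every non-exceptional component has at least $5$ or $6$ vertices (since smaller types are already exceptional), the increment formula yields $\Delta_i \geq v_i/20$ after a short calculation, with the finitely many small cases verified by hand.

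Finally, for item (e) I pair $G_i$ with $G_{\pi(i)}$. The exceptional $G_i$ in a distinguished face may contribute $\Delta_i$ as low as $-3/2$, but $G_{\pi(i)}$ is placed strictly inside a face of $G_i$, i.e.\ in a non-distinguished face, so $\Delta l_w = \Delta l_b = 0$ for $G_{\pi(i)}$; either Lemma \ref{lem:embed_comp} (if $m(G_{\pi(i)}) \geq 1000$) or the item (d) analysis (if smaller) then yields $\Delta_{\pi(i)} \geq (m(G_{\pi(i)}) - 2\lozenge(G_{\pi(i)}))/18 + O(1)$. The difference $(1/18 - 1/20) m(G_{\pi(i)})$ together with the additive $O(1)$ compensates both the negative $\Delta_i$ and the required $m(G_i)/20$ contribution on the right-hand side; the single exception with $m(G_{\pi(i)}) > m/2$ loses at most $2$ exactly as in item (a). The main obstacle is the bookkeeping in item (e), where the interaction of each of the seven exceptional types of $G_i$ with each possible type of $G_{\pi(i)}$ must be checked uniformly, and one has to verify that for the very smallest $G_{\pi(i)}$ the direct computation from item (d) is tight enough to offset the worst-case $\Delta_i = -3/2$. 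Once Lemmas \ref{lem:embed_comp} and \ref{lem:W-B} are invoked, this reduces to a tedious but routine enumeration.
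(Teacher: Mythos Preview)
Your approach is essentially the same as the paper's: both arguments write $\Delta_i$ as an increment to $d$, invoke Lemma~\ref{lem:embed_comp} for item~(a), reduce items~(b)--(d) to the three bullet bounds coming from Lemma~\ref{lem:W-B} (distinguished face, non-distinguished white, non-distinguished black) together with a finite check of the exceptional types, and for item~(e) pair $G_i$ with $G_{\pi(i)}$ lying in a non-distinguished face. Two small numerical slips are worth cleaning up: the worst exceptional value is $\Delta_i=-1/2$ (the triangle), not $-3/2$; and in item~(a) the bookkeeping is tighter than you state --- in every placement the ``other terms'' total exactly $-2$ in the worst case, so the $+2$ bonus from Lemma~\ref{lem:embed_comp} cancels it precisely and yields $\Delta_i\ge (v_i-2\lozenge)/18\ge (v_i-2\lozenge)/20$ without needing any slack from the $1/18$ vs.\ $1/20$ gap (which is important, since $v_i-2\lozenge$ need not be large).
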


\begin{proof}
Item \ref{item:first} is a consequence of Lemma \ref{lem:embed_comp}. There is at most one component that does not satisfy the stronger inequality, since the size of each such component, by Lemma \ref{lem:embed_comp} again, must be greater than $m/2$.

Consider some component $G_i$ with $m(G_i)<1000$. Lemma \ref{lem:W-B}, which bounds the difference between the number of black faces and the number of white faces for a single component in terms of the number of its vertices, implies that
\begin{itemize}
\item If $G_i$ belongs to a distinguished face, then $\Delta_i\geq \lceil\frac{m(G_i)-5}{2}\rceil/2$.
\item If $G_i$ belongs to a non-distinguished white face, then $\Delta_i\geq \lceil\frac{m(G_i)-1}{2}\rceil/2$.
\item If $G_i$ belongs to a non-distinguished black face, then $\Delta_i\geq \lceil\frac{m(G_i)+3}{2}\rceil/2$.
\end{itemize}
The case of two nested triangles contained in $f_b$ is a special case, which should be checked separately. Recall that the contribution to $r$ is $2$, and so the contribution to $d$ is $(6-2+1-1)/2-2 = 0$ as claimed.
Items \ref{item:second} and \ref{item:third} follow.

Moreover, one can verify directly that the contribution of components in $Y$ with $5$ vertices is at least $\frac{1}{2}$. For example, a component of type $T_{5,1}$ surrounded by a non-distinguished white face has $m(G_i)=5, W=0,B=2$ and does not contribute to $l_b,l_w$, and therefore its contribution is $(5-1-2)/2=1$. As the items above imply that for components of size 6 or more $\Delta_i \geq m(G_i)/12$, item \ref{item:fourth} follows.

For item \ref{item:fifth}, the hope is that the nonpositive contribution of $G_i$ will be compensated by the positive contribution of $G_{\pi(i)}$. We consider two cases: If $m(G_{\pi_i})<1000$, then note that as $G_{\pi(i)}$ does not lie in a distinguished face, its contribution is at least $\lceil\frac{m(G_{\pi_i})-1}{2}\rceil/2$, which is always greater than $(m(G_{\pi_i})+m(G_i))/20$, as $m(G_i) \leq 6$ and $m(G_{\pi(i)})\geq 3$.

If $m(G_{\pi_i})\geq 1000$, then by Lemma \ref{lem:embed_comp}, for all but at most one $i$ there holds
$\Delta_{\pi(i)} \geq \frac{m(G_{\pi(i)})-2\lozenge(G_{\pi(i)})}{18}$, and since $m(G_i)\leq 6$ this implies that $\Delta_i+\Delta_{\pi(i)} \geq \frac{m(G_i)+m(G_{\pi(i)})-2\lozenge(G_{\pi(i)})}{20}$, as desired.
\end{proof}

As $d =  \sum_{i=1}^r{\Delta_i}$, Proposition \ref{prop:bounds_for_one_component} allows us to lower bound $d$, up to an additive term of $O(1)$, by
\begin{align}\label{eq:bound_for_d}
-\frac{1}{2}|S| + \sum_{i \in T}{\frac{m(G_i) + m(G_{\pi(i)})-2\lozenge(G_{\pi(i)})}{20}} + \sum_{i \in X \setminus \pi(T)}{\frac{m(G_i)-2\lozenge(G_i)}{20}} + \sum_{i \in Y}{\frac{m(G_i)}{20}}.
\end{align}

A similar analysis on $d'$ is much simpler to perform. Indeed, Lemma \ref{lem:W-B} implies that every component contributes at least $\frac{m(G_i)-1}{4}$, and in particular, $d'\geq m/6$. Here it is important to note that there is no additive $O(1)$ term.

Thus, \[B_m(G)em'(G)
\leq \left(\frac{m}{n}\right)^{m/6}\frac{m^{-d}}{{R(f_w)R(f_b)}}.\]

We will now show that if $m$ is greater than some constant $M$,
\begin{equation}\label{eq:final}
\frac{m^{-d}}{{R(f_w)R(f_b)}}\leq A^m m^{-\frac{m}{5000}}.
\end{equation}
for some constant $A.$ This will complete the proof, as $(\frac{m}{n})^{m/6}m^{-m/5000}=o_m(1),$ while for $m$ smaller than $M$, the naive estimation of $(\frac{m}{n})^{m/6}m^m$ is enough.

If $d\geq m/5000,$ \eqref{eq:final} follows and we are done.

Suppose now that $d<m/5000$. We will show that in this case, there are many small empty components in the distinguished faces. This will enable us to lower bound $R(f_w)R(f_b)$, and to compensate for the small values of $d$.

Let $Q$ denote the number of empty exceptional components that lie in distinguished faces.
That is, $Q = |R|+|S|$.
Observe that
\begin{align*}
m -2\lozenge(G)= &\sum_{i=1}^r m(G_i) - 2 \lozenge(G_i) \leq 13Q +20d + O(1).
\end{align*}
This may be verified by comparing the coefficients of the components in each of the six sets.

On the other hand, by \ref{prop:good_flat} \[m -2\lozenge(G)\geq m/100\]

Thus,   $m/100\leq 13Q +20d + O(1).$

If $m$ is large enough, the assumption that $d<m/5000$ implies that $Q>m/5000$ and that $d+Q>m/5000$.

Recall that
\[
R(f_w)R(f_b):= (r^{0}_{T_3}(f_b))! \cdot (r^{0}_{T_3}(f_w))! \cdot (r^{0}_{T_4}(f_b))! \cdot (r^{0}_{T_4}(f_w))!\cdot(r^{0}_{T_{5,2}}(f_b))! \cdot (r^{0}_{T_{5,1}}(f_w))!  (r^{0}_{T_{3,1}}(f_w))!.
\]

Therefore, $R(f_w)R(f_b) \geq ((Q/7)!)^7 \geq(\frac{m}{35000e})^{Q}.$

Thus,
\[\frac{m^{-d}}{R(f_w)R(f_b)}\leq a^m m^{-(d+Q)}\leq a^m m^{-m/5000},\] for $m$ larger than some constant $a$. The theorem follows.

\newpage
\appendix
\section{Proofs of Technical Claims}
\begin{obs}\label{obs:useful_binom_est}
For any $\delta>0,$ there exists $C_\delta>0$ such that for any natural $r,m$
\begin{equation}\label{eq:useful_binom_est}
\binom{r+m}{m}\leq C_\delta^m(1+\delta)^r.
\end{equation}
\end{obs}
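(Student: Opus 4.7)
The plan is to prove the observation via a one-line comparison with a suitably chosen binomial expansion, rather than by Stirling-type estimates (which would work but involve splitting into cases according to the ratio $r/m$).

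The key idea is that for any $t > 0$, Newton's binomial theorem gives
\[
(1+t)^{r+m} = \sum_{k=0}^{r+m} \binom{r+m}{k} t^k \geq \binom{r+m}{m} t^m,
\]
since every term on the right is nonnegative. Rearranging, we obtain
\[
\binom{r+m}{m} \leq \left(\frac{1+t}{t}\right)^m (1+t)^r.
\]
This is a family of bounds, one for each $t > 0$, and the trade-off is transparent: small $t$ keeps $(1+t)^r$ small but blows up the $((1+t)/t)^m$ factor, while large $t$ does the opposite.

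To obtain the stated inequality, I would simply specialize $t := \delta$, which yields
\[
\binom{r+m}{m} \leq \left(\frac{1+\delta}{\delta}\right)^m (1+\delta)^r,
\]
so that $C_\delta := (1+\delta)/\delta$ works. There is no real obstacle here; the only thing to note is that one must allow $C_\delta \to \infty$ as $\delta \to 0$, which is consistent with the statement, since $C_\delta$ depends on $\delta$ but not on $r$ or $m$. This observation is applied in the proof of Lemma \ref{lem:good_bound} in two different guises (once for $\binom{r+|Opt(H)|-1}{|Opt(H)|-1}$ with the roles of $r$ and $m$ set to $r$ and $|Opt(H)|-1$, and once for the product $\prod (r_{\{i,j\}} + \tilde L(i,j)-1)!/(\tilde L(i,j)-1)!/r_{\{i,j\}}!$ after factoring out the factorials).
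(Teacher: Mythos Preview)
Your proof is correct and in fact cleaner than the paper's. The paper proceeds exactly by the Stirling-type route you alluded to and set aside: it uses the standard bound $\binom{r+m}{m}\leq \bigl(\tfrac{(r+m)e}{m}\bigr)^m$ and then splits into the two cases $r/m\geq M_\delta$ and $r/m<M_\delta$, where $M_\delta$ is chosen so that $((1+n)e)^{1/n}<1+\delta$ for $n\geq M_\delta$. Your binomial-expansion trick $(1+\delta)^{r+m}\geq \binom{r+m}{m}\delta^m$ avoids the case split entirely and even yields the explicit constant $C_\delta=(1+\delta)/\delta$, whereas the paper's constant is the less transparent $C_\delta=(1+M_\delta)e$ with $M_\delta$ defined only implicitly. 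Both arguments are short, but yours is the more elementary and self-contained one.
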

\begin{proof}
We use the bound $\binom{r+m}{m} \leq \left(\frac{(r+m)e}{m}\right)^m$. Since the limit of $((1+n)e)^{1/n}$ when $n\rightarrow \infty$ is $1$, for every $\delta>0$ there is an $M_{\delta}>0$ such that if $n\geq M_{\delta}$ then $((1+n)e)^{1/n} < 1+ \delta$. Thus, if $\frac{r}{m}\geq M_{\delta}$ then $\left(\frac{(r+m)e}{m}\right)^m \leq (1+\delta)^r$.

Conversely, if $\frac{r}{m} < M_{\delta}$, then if we take $C_{\delta}=(1+M_\delta)e$, we have $ \left(\frac{(r+m)e}{m}\right)^m \leq (C_\delta)^m$.

\end{proof}

\begin{proof}[Proof of Proposition \ref{prop:moving_all_points_of_bananas}]
We want to show that
\[\sum_{\{k_i\}_{i=1}^r: \sum k_i=k}\binom{k}{k_1,\ldots,k_r}\prod_{i=1}^r T_{k_i,4}\leq 8^{r-1} T_{k,4},\]
or equivalently,
\[\sum_{\{k_i\}_{i=1}^r: \sum k_i=k}\prod_{i=1}^r T'_{k_i,4}\leq 8^{r-1} T'_{k,4},\]
where $T'_{k,4}=T_{k,4}/k!.$
It will be enough to show the claim for $r=2$ and then use induction.
We will give a proof from the book.

The claim for $r=2$ can be verified using a computer up to $k=200,$ using the formula for $T_{k,4}$ given in Theorem \ref{thm:Tutte}.
For $k>200$, by using the same formula, and Stirling's approximation, one can show that $T'_{k,4}$ is asymptotically equal to (but always larger than)
$Z\gamma^kk^{-\frac{5}{2}}\exp\left(-\frac{\frac{7}{2}+\frac{13}{144}}{k}\right),$
where $Z=5120/243\sqrt{6\pi}\approx 4.853.$ Moreover, for $k\geq 10$
\[1<\frac{T'_{k,4}}{Z\gamma^kk^{-\frac{5}{2}}\exp\left(-\frac{\frac{7}{2}+\frac{13}{144}}{k}\right)}< 1.05.\]

Thus, \begin{align*}
\sum_{a=21}^{k/2}\frac{T'_{a,4}T'_{k-a,4}}{T'_{k,4}}&<1.11Z\sum_{a=21}^{k/2}\frac{a^{-\frac{5}{2}}(k-a)^{-\frac{5}{2}}}{k^{-\frac{5}{2}}}\\
&<1.11Z\sum_{a=21}^{k/2}\frac{a^{-\frac{5}{2}}(k/2)^{-\frac{5}{2}}}{k^{-\frac{5}{2}}}<\\
&<1.11\cdot2^{\frac{5}{2}}\left(\int_{20}^\infty x^{-\frac{5}{2}}dx\right)Z=\\
&=1.11\cdot2^{\frac{5}{2}}\cdot\frac{2}{3}\cdot20^{-\frac{3}{2}}Z<0.25.
\end{align*}
In addition
\[\sum_{a=1}^{20}\frac{T'_{a,4}T'_{k-a,4}}{T'_{k,4}}<1.05\sum_{a=1}^{20}\frac{T'_{a,4}}{\gamma^a}\left(\frac{k}{k-a}\right)^{\frac{5}{2}}.\]
For $k>200, 1<a\leq 20,~(\frac{k}{k-a})^{\frac{5}{2}}<1.31,$ and also $\sum_{a=1}^{20}\frac{T'_{a,4}}{\gamma^a}<1.25,$ as a direct verification reveals. The contribution of $a$ from $1$ to $20$ is thus no more than $1.7.$

Adding the contribution of $a=0,$ which is $2$, and multiplying everything by $2$ to take into account the summation over $a>k/2$,
we get a bound of $7.9$, as claimed.

For $r>2$, the result follows from a simple inductive argument. Indeed,
\[
\sum_{\{k_i\}_{i=1}^r: \sum k_i=k}\prod_{i=1}^{r} T'_{k_i,4} = \sum_{k_n} \left(\sum_{\{k_i\}_{i=1}^{r-1}: \sum k_i=k-k_n} \prod_{i=1}^{r-1} T'_{k_i,4} \right) \cdot T'_{k_n,4} \leq
\]
\[
\sum_{k_n} 8^{r-2} T'_{k-k_n,4} T'_{k_n,4} \leq 8^{r-1} T_{k,4}.
\]
\end{proof}

\begin{proof}[Proof of Proposition \ref{prop:Whitney}]
Recall Whitney's famous theorem (\cite{Whitney})
\begin{theorem}
A $3-$connected planar graph has a unique embedding in the sphere, up to a reflection.
\end{theorem}
We would like to show that bipolar CFPGs, although not necessarily $3$-connected, have a controlled number of embeddings.

We will define an injection from embeddings of $C$ to a certain set of CFPGs. We will give an upper bound on the number of CFPGs that we can get in this fashion, which will imply an upper bound on the number of embeddings of $C$.

We now define an operation that changes $C$.
Let $e_1=\{a,b\},e_2=\{a,c\}$ be two edges of $C$ that share a vertex $a$, and that bound the same black face in some embedding of $C$.  Let $w_1,w_2$ be the white faces touched by $e_1,e_2$ respectively. By adding two new vertices to each edge, we split $e_1$ and $e_2$ into three segments. That is, the edge $\{a,b\}$ is replaced by the three edges $\{a,b_2\},\{b_2,b_1\}$ and $\{b_1,b\}$, and $\{a,c\}$ is similarly split.
 Next, we remove the edges $\{b_2,b_1\}, \{c_2,c_1\}$ and add the edges $\{b_2,c_2\},\{b_1,c_1\}$, merging the faces $w_1,w_2$ into a single white face. We call this operation adding a \textit{ribbon} between $e_1$ and $e_2$.

We call a connected CPFG $3$-connected if there are no two points whose removal disconnects the CFPG \emph{as a topological space}.
Note that Whitney's theorem holds for $3$-connected CFPGs, as it can be reduced to the theorem for graphs by replacing each white face by a $3-$connected triangulation.

Consider a connected bipolar CFPG $C$. We first prove the proposition for a single leaf $L$ at $x$.
Let $m_L$ be the number of vertices spanned by $L$, and let $y,z$ be the points in $L$ between which there are several branches, assuming that there are such points. Assume first that $x \notin \{y,z\}$.

Fix an ordering of the branches, and let $\eta$ be an embedding of $L$ that induces the chosen ordering on the branches. We now define the extension of $L$ that corresponds to $\eta$. Our goal is to make $L$ $3$-connected by adding ribbons to it.

Let $B,B'$ be two consecutive branches between $y$ and $z$. Let $e_1=\{y,b\},e_2=\{y,c\}$ be two edges bounding the same black face such that $e_1$ belongs to $B$ and $e_2$ belongs to $B'$. We add a ribbon between $e_1$ and $e_2$. This is done for each pair of consecutive branches, and the same procedure is applied to $z$. Let $d_B(y)$ be the degree of $B$ at $y$. The number of ways to add a ribbon between two branches $B,B'$ is at most $d_B(y) \cdot d_{B'}(y)$, and so for a given ordering of the branches, the number of ways to perform this step is at most $\prod_{B} d_B(y)^2 d_B(z)^2$, which is at most $2^{2\sum_B{d_B(y)+d_B(z)}}=2^{O(m_L)}$.

Next, we deal with bananas and branches in $L$ between vertices $u,v$ other than $y$ and $z$ that contain the edge $\{u,v\}.$ Assume that $T$ is such a branch between two vertices $u$ and $v$, one of whom may be equal to $y$ or $z$. Without loss of generality, assume that $d_u \leq d_v$, and let $e_1,e_2$ be two edges of $u$ such that $e_1$ belongs to $T$ and $e_1,e_2$ bound the same black face under the embedding $\eta$. We add a ribbon between $e_1$ and $e_2$. The number of possible choices for $e_1,e_2$ is at most $2 d_u$. Let $G$ be the planar graph in which two vertices are connected if there is a banana or a triangular face between them in $L$. The number of ways to perform this step is at most
\[\prod_{\{u,v\} \in E(G)} \min(d_v,d_u) \leq 2^{\sum_{\{u,v\} \in E(G)} \min(d_v,d_u)},\] which is $2^{O(m_L)}$ by Lemma \ref{lem:mckay}.

Let $B$ be a branch between $y$ and $z$, and let $v$ be a vertex of $B$ whose removal would disconnect $B$.
There are several different cases that we have to deal with.

First of all, there may be an additional leaf at $v$. As $C$ is bipolar, $v$ can have at most one additional leaf. As above, we add a ribbon between an edge $e_1$ of $v$ contained in this leaf, and another edge $e_2$ of $v$ not contained in the leaf, such that $e_1,e_2$ bound the same black face under $\eta$. We do this for all such vertices $v$, and as the number of choices for $v$ is at most $d_v^2$, the number of ways to perform this step is $2^{O(m_L)}$.

Another possibility is that $v$'s removal disconnects $B$ into two parts $B_1,B_2$ such that $B_1$ contains $y$ and $B_2$ contains $z$. We add a ribbon between two of $v$'s edges $e_1,e_2$ contained in $B_1,B_2$ respectively, that bound the same black face under $\eta$. As above, the number of ways to do this is $2^{O(m_L)}$.

We claim that the resulting CFPG is $3$-connected (as a topological space). Indeed, consider the trimming at two vertices $a,b$. As a result of the ribbons, we cannot detach any leaves, triangles, branches or bananas. It may be possible to split a branch $B$ between $y$ and $z$ in the middle, but in this case there are two branches between $a$ and $b$. Therefore there are at least two branches between $y$ and $z$, and so splitting $B$ does not disconnect the CFPG.

As a result, Whitney's theorem implies that there is a unique embedding of $L$ up to a reflection. As $\eta$ induces an embedding of the CFPG, we have defined a mapping from embeddings of $L$ to $3$-connected CFPGs such that at most two embeddings are mapped to each CFPG. As we saw, given an ordering of the branches between $y$ and $z$, the number of CFPGs that we can get using this procedure is $2^{O(m_L)}$, and therefore the number of embeddings of $L$ is $2^{O(m_L)}$.

We now apply the same argument to our original bipolar component $C$. Given an embedding $\eta$ of $C$, we perform the above procedure on each of the leaves separately. Then, given the ordering and nesting structure of the leaves at $x$, we say that two leaves are consecutive if they touch the same black face. For each pair of consecutive leaves $L,L'$ at $x$, we add a ribbon between an edge $e_1$ of $x$ belonging to $L$ and edge $e_2$ of $x$ that belongs to $L'$, both bounding the same black face. Given the ordering and nesting structure of the leaves, there is a unique way to perform this step.
 This connects all of $x$'s leaves with ribbons, creating a $3$-connected CFPG. The number of possible CFPGs is $2^{O(m)}$, and therefore given the ordering and nesting structure of the leaves at $x$ and the ordering of the branches at $y,z$, the number of embeddings is $2^{O(m)}$.

\end{proof}

\begin{proof}[Proof of Lemma \ref{lem:effect of flattening}]
We shall first perform flattening steps between white faces.
If all of the white faces touch a single component, then there are no white steps. Otherwise, let $f_1$ be a white face that touches a maximal number of components of $G$. In addition, if $l_{f_1}\geq 4$, then among those faces which touch the maximal number of components of $G,~f_1$ is the one which touches the largest number of edges. Otherwise, if $l_{f_1}<4$, then we choose $f_1$ to be the white face which touches the minimal number of edges, among those faces that touch the maximal number of components of $G$.

All of the steps that we perform will be flattenings from some white face to $f_1$.
The flattening steps will not change the above assumptions. We order the flattening steps so that steps involving a triangle that contains only triangles occur last.

We note that a flattening step does not affect $T_{G,k}$, and so the only change to $A_{m,k}$ made by a flattening step from a face $f_2$ to $f_1$ is to the term
\[
\sum_{k_1+k_2 = k}\left(\frac{(k_1+m_1)^{l_1-\frac{7}{2}}}{R(f_1)}\right) \cdot \left(\frac{(k_2+m_2)^{l_2-\frac{7}{2}}}{R(f_2)}\right).
\]
Here $k$ is the number of interior points assigned to $f_1$ and $f_2$, or $k-\sum_{f \in F_w \setminus \{f_1,f_2\}}k_f$ in the notation of \ref{def_Amk}.

Assume first that $f_2$ is not exceptional.
For convenience write $a_i,b_i,c_i$ for $r_{T_3}^0(f_i)$, $r_{T_4}^0(f_i)$, $r_{T_{5,1}}^0(f_i)$ respectively, for $i=1,2.$ Set $m_i^{''} = 3r_{T_3}^0(f_i)+4r_{T_4}^0(f_i)+5r_{T_{5,1}}^0(f_i).$
Write $l'_i$ for the number of other components of $G~f_i$ touches, except the one which separates $f_1,f_2.$ For the separating component which touches $f_i,$ write $m'_i$ for the number of edges of it which touch $f_i.$ Write $m_i^0$ for the number of edges of $G$ which touch $f_i$ but are not included in the separating component or in the lonely components of types $T_3,T_4,T_{5,1}.$

We first show
\begin{align}\label{eq:main_for_flat}
&\notag\sum_{k_1+k_2=k}\prod_{i=1,2}\frac{(k_i+m'_i+m^0_i+m^{''}_i)^{l'_i+a_i+b_i+c_i-\frac{5}{2}}}{a_i!b_i!c_i!} \leq\\
& \quad\quad 100 \left(\frac{a+b+c}{a_1+b_1+c_1}\right)^3 e^{3m'_2} \frac{(k+m'_1+m^0+m^{''})^{l'+a+b+c-\frac{5}{2}}}{a!b!c!}(m_{2}')^{-\frac{5}{2}},
\end{align}

where $a=a_1+a_2, b=b_1+b_2, c=c_1+c_2$, and $(\frac{a+b+c}{a_1+b_1+c_1})$ is defined to be $1$ if the denominator vanishes.

If  $l'_1+a_1+b_1+c_1 = l'_2+a_2+b_2+c_2 = 1,$
\begin{align*}
&(k+m'_1+m^0+m^{''})^{-\frac{1}{2}}/(k+1)\geq(k+m'_1+m^0+m^{''})^{-\frac{3}{2}}\geq\\
&\geq(k+m'+m^0+m^{''})^{-\frac{3}{2}}\geq(k_1+m'_1+m^0_1+m^{''}_1)^{-\frac{3}{2}}(k_2+m'_2+m^0_2+m^{''}_2)^{-\frac{3}{2}}.
\end{align*}
Thus,
\[(k+m'_1+m^0+m^{''})^{-\frac{1}{2}}\geq\sum_{k_1+k_2=k}\prod_{i=1,2}(k_i+m'_i+m^0_i+m^{''}_i)^{-\frac{3}{2}}.\]
In addition \begin{equation}\label{eq:exp}
e^{2m'_2}\geq {m_2'}^{\frac{5}{2}},
\end{equation} and the inequality follows.

 When $l'_1+a_1+b_1+c_1 = 2,$ then $l'_2+a_2+b_2+c_2 \leq 2.$ In this case Equation \eqref{eq:main_for_flat} is a consequence of Proposition \ref{prop:no_division2} and \eqref{eq:exp}.

Consider the general case, $l'_1+a_1+b_1+c_1\geq 3$. For any $k_1,k_2\geq 0$ whose sum is $k$,
\begin{align*}
&\frac{(k+m'_1+m^0+m^{''})^{l'+a+b+c-\frac{5}{2}}}{a!b!c!} \\
& = (k+m'_1+m^0+m^{''})^{\frac{1}{2}}\frac{(k+m'_1+m^0+m^{''})^{l'+a+b+c-3}}{a!b!c!}\\
&~\geq(k+m'_1+m^0+m^{''})^{\frac{1}{2}}
\frac{\binom{l'+a+b+c-3}{l'_1+a_1+b_1+c_1-3}}{a!b!c!}\cdot\\
&\quad\cdot(k_1+m'_1+m^0_1+m^{''}_1)^{l'_1+a_1+b_1+c_1-3}
(k_2+m^0_2+m^{''}_2)^{l'_2+a_2+b_2+c_2}\\
&~\geq (k_2+1)^{\frac{5}{2}}\frac{\binom{l'+a+b+c-3}{l'_1+a_1+b_1+c_1-3}}{a!b!c!}\cdot\\
&\quad\cdot(k_1+m'_1+m^0_1+m^{''}_1)^{l'_1+a_1+b_1+c_1-\frac{5}{2}}
(k_2+m^0_2+m^{''}_2)^{l'_2+a_2+b_2+c_2-\frac{5}{2}}.
\end{align*}
If $l'_2+a_2+b_2+c_2\leq 2,$ then
\[(k_2+m^0_2+m^{''}_2)^{l'_2+a_2+b_2+c_2-\frac{5}{2}}\geq (k_2+m^0_2+m^{''}_2+m'_2)^{l'_2+a_2+b_2+c_2-\frac{5}{2}}.\]
Otherwise we have:
\begin{align*}
&(k_2+m^0_2+m^{''}_2)^{l'_2+a_2+b_2+c_2-\frac{5}{2}} =\\
&~= (k_2+m'_2+m^0_2+m^{''}_2)^{l'_2+a_2+b_2+c_2-\frac{5}{2}}
\left(\frac{k_2+m^0_2+m^{''}_2}{k_2+m'_2+m^0_2+m^{''}_2}\right)^{l'_2+a_2+b_2+c_2-\frac{5}{2}}\geq\\
&~\geq (k_2+m'_2+m^0_2+m^{''}_2)^{l'_2+a_2+b_2+c_2-\frac{5}{2}}
\left(\frac{k_2+m^0_2+m^{''}_2}{k_2+m'_2+m^0_2+m^{''}_2}\right)^{\frac{k_2+m^0_2+m^{''}_2}{3}}\geq\\
&~\geq (k_2+m'_2+m^0_2+m^{''}_2)^{l'_2+a_2+b_2+c_2-\frac{5}{2}}e^{-m'_2/3},
\end{align*}
where in the one before last passage we have used the fact that for any connected component of $G$ which touches a face $f,$ the number of edges which touch $f$ is at least $3$. Indeed, $(m_2^0 + m_2'')/3$ is a lower bound for $l_2-1$, which is greater than $l_2'+a_2+b_2+c_2-\frac{5}{2}$.

The last passage is a consequence of the inequality
\[\left(\frac{x}{x+y}\right)^{x}\geq e^{-y}\Longleftrightarrow 1+y/x\leq e^{y/x}\] which holds for positive reals.

In addition, by the monotonicity of $\binom{a+x}{b+x},\binom{a+x}{b},$
\begin{align*}
 &\binom{l'+a+b+c-3}{l'_1+a_1+b_1+c_1-3}\geq \binom{a+b+c-3}{a_1+b_1+c_1-3} \geq \\
& \geq \frac{1}{8}\left(\frac{a_1+b_1+c_1}{a+b+c}\right)^3\binom{a+b+c}{a_1+b_1+c_1}\geq \frac{1}{8}\left(\frac{a_1+b_1+c_1}{a+b+c}\right)^3\binom{a}{a_1}\binom{b}{b_1}\binom{c}{c_1},
\end{align*}
where $\frac{a_1+b_1+c_1}{a+b+c}$ is defined to be $1$ if the nominator and denominator vanish.

Combining the above, we see that for any fixed $k_2,$
\begin{align*}
\frac{e^{3 m'_2}}{8(k+1)}\left(\frac{a+b+c}{a_1+b_1+c_1}\right)^3  & \frac{(k+m'_1+m^0+m^{''})^{l'+a+b+c+d-\frac{5}{2}}}{a!b!c!}(m_{2}')^{-\frac{5}{2}}{(1+k_2)^{-\frac{5}{2}}}\geq\\
&\geq\prod_{i=1,2}\frac{(k_i+m'_i+m^0_i+m^{''}_i)^{l'_i+a_i+b_i+c_i-\frac{5}{2}}}{a_i!b_i!c_i!}.
\end{align*}
Since $\sum_{k_2\geq 0}{(1+k_2)^{-\frac{5}{2}}}\leq 2,$
\eqref{eq:main_for_flat} follows.

We now prove that \eqref{eq:main_for_flat} holds also when $f_2$ is exceptional, but not a triangle containing only triangles, under the assumption that we move one less face from $f_2$ than before, with the following small changes in notations:
$m_2'$ is now the number of edges in $f_2$ of the separating component together with those of the component which is not moved. The numbers $a,b,c$ will be the number of type $T_3,T_4,T_{5,1}$ components in $f_1$ after the flattening step, so that \[l'+a+b+c:=-1+\sum_{i=1,2}l'_i+a_i+b_i+c_i,~m^0=m_1^0+m_2^0,m''=m_1''+m_2''.\]

In particular, \[a_1+a_2\geq a,b_1+b_2\geq b,c_1+c_2\geq c.\]

In order to perform a flattening step from an exceptional component, we must have
\[l_1'+a_1+b_1+c_1\geq l_2'+a_2+b_2+c_2\geq2.\]
If both are exactly $2,$ the result follows again from Proposition \ref{prop:no_division2} and \eqref{eq:exp}.

The remaining case, $l_1'+a_1+b_1+c_1\geq 3$ follows almost as before, only that this time for a fixed $k_2$ we get
\begin{align*}
&\frac{(k+m'_1+m^0+m^{''})^{l'+a+b+c-\frac{5}{2}}}{a!b!c!} \geq (k_2+1)^{\frac{3}{2}}\frac{\binom{l'+a+b+c-3}{l'_1+a_1+b_1+c_1-3}}{a!b!c!}\cdot\\
&\quad\cdot(k_1+m'_1+m^0_1+m^{''}_1)^{l'_1+a_1+b_1+c_1-\frac{5}{2}}
(k_2+m^0_2+m^{''}_2)^{l'_2+a_2+b_2+c_2-\frac{5}{2}}.
\end{align*}
Again $\sum_{k_2\geq 0}(1+k_2)^{-\frac{3}{2}}\leq 3,$ and the remaining steps are identical.

Iterating \eqref{eq:main_for_flat} over the different possible white faces (not a triangle with only triangles inside) sequentially, the $100e^{3m'_2}$ are collected to give at most $(100e^9)^m,$ as there are at most $m$ steps, and the all the $m'_2$ are distinct and sum to at most $3m.$ The $(\frac{a_1+b_1+c_1}{a+b+c})^3$ terms give a telescopic product which is no more than $\frac{1}{m^3},$ since the final $a+b+c$ may sum to at most $m.$

We now move to the last case, that $f_2$ is a triangle with $a_2>2$ triangles inside. It thus has $3a_2+3$ edges. We assume that all other types of white faces have already been dealt with. Suppose that $f_1$ has $a_1$ triangles and $l'$ other holes, not containing the one which separates $f_1$ from $f_2,$ and $m_1$ edges not involved in its $a_1$ triangles. We have $l'+a_1\geq 3$, because the number of holes in $f_1$ is always maximal.
We would like to show that for any positive $\delta$ there exists $c_\delta$ such that for every $k_1,k_2$ that sum to $k$,
\begin{align}
\label{eq:last???}
c_\delta^{a_2}(1+\delta)^{k_2}\frac{(k_1+m_1+3a_1+3a_2-6)^{l'+a_1+a_2-4}}{(a_1+a_2-2)!}(k_2+9)^{\frac{1}{2}}\geq \\ \notag
\geq \frac{(k_1+m_1+3a_1)^{l'+a_1-\frac{5}{2}}}{a_1!}\frac{(k_2+3a_2+3)^{a_2-\frac{5}{2}}}{a_2!}.
\end{align}
Iterating this over all the exceptional faces of the last type will show, from similar reasoning to the above, that we decrease $A_{m,k}$ by at most $c_\delta^m(1+\delta)^k.$

Equation \eqref{eq:last???} follows from the following observations.
\begin{enumerate}
\item \[\sqrt{\frac{k_2+3a_2+3}{k_2+9}}< 2^{a_2}.\]
\item
\[\frac{(k_1+m_1+3(a_1+a_2-2))^{l'+a_1+a_2-\frac{9}{2}}}{(a_1+a_2-2)!}\geq \]\[\sqrt{(k_1+m_1+3(a_1+a_2-2))}\frac{\binom{l'+a_1+a_2-5}{l'+a_1-2}}{(a_1+a_2-2)!}\frac{(k_1+m_1+3a_1)^{l'+a_1-3}}{a_1!}\frac{(3a_2-6)^{a_2-2}}{a_2!}\geq \]\[
\frac{\binom{l'+a_1+a_2-5}{l'+a_1-2}}{(a_1+a_2-2)!}\frac{(k_1+m_1+3a_1)^{l'+a_1-\frac{5}{2}}}{a_1!}\frac{(3a_2-6)^{a_2-2}}{a_2!}\]
\item
\[a_1!a_2!\frac{\binom{l'+a_1+a_2-5}{l'+a_1-3}}{(a_1+a_2-2)!}=\frac{a_2!}{(a_2-2)!}\frac{a_1!}{(l'+a_1-3)!}\frac{(l'+a_1+a_2-5)!}{(a_1+a_2-2)!}.\]
By monotonicity of $(a+x)!/a!$ this fraction is at least $a_2(a_2-1)$ when $l'\geq 3,$ and even in the remaining cases it is at least $\frac{a_2(a_2-1)}{a_2^3}>e^{-2a_2}.$
\item \[(c_\delta(3a_2-6))^{a_2-2}(1+\delta)^{k_2}\geq(k_2+3a_2+9)^{a_2-2}\] for $c_\delta$ large enough.
Indeed, \[(\frac{k_2+3a_2+3}{c_\delta(3a_2-6)})^{a_2-2}< (1+\frac{k_2}{c_\delta(3a_2-6)})^{a_2-2}<e^{\frac{k_2}{3c_\delta}}<(1+\delta)^{k_2}\]
if $c_\delta$ is large enough.
\end{enumerate}
Combining the above and the claim follows. 

Regarding the black flattening moves, we need to show
\begin{align}
&100 \left(\frac{a+b+c+d}{a_1+b_1+c_1+d_1}\right)^3 e^{3m'_2} \frac{(k+m'_1+m^0+m^{''})^{l'+a+b+c+d-\frac{5}{2}}}{\sqrt{a!b!c!d!}}(m_{2}')^{-\frac{5}{2}}\geq\\
&\notag\quad\quad\geq\sum_{k_1+k_2=k}\prod_{i=1,2}\frac{(k_i+m'_i+m^0_i+m^{''}_i)^{l'_i+a_i+b_i+c_i+d_i-\frac{5}{2}}}{\sqrt{a_i!b_i!c_i!d_i!}} .
\end{align}
The proof is almost identical to the proof in the white case.
The sole change comes from the difference in the automorphism factors, $\sqrt{a!b!c!d!},$ where $a,b,c,d$ count the black exceptional, where in the white case we had $a!b!c!.$ The appearance of the four automorphism terms does not complicate the proof, and the fact they appear inside a square root only makes the numerical argument simpler.
\end{proof}

\begin{proof}[Proof of Proposition \ref{prop:no_division}]
For the first item, suppose first all $l_i < 4.$
Then the left hand side of the sum can be bounded by
\[\binom{k+w-1}{w-1},\]
where we just replaced the sum by the number of summands times the trivial bound $1$ for each summand.
By Observation \ref{obs:useful_binom_est}, for $0<\delta<1,$ we get the bound $C^{'w}_\delta(1+\delta)^k,$ and $w<\sum m_i.$

If $l_1>3,$ we can use the previous case to bound the left hand side by
\[C_{\delta}^{'m}\sum_{k_1=0}^k(k_1+m_1)^{l_1-\frac{7}{2}}(1+\delta)^{k-k_1}\leq
\]
\[
C^{'m}_{\delta}(k+m_1)^{l_1-\frac{7}{2}}\sum_{k_1=0}^k(1+\delta)^{k-k_1}<
C^m_{\delta}(k+m_1)^{l_1-\frac{7}{2}}(1+\delta)^{k},\]
defining a new constant $C_\delta=\frac{1+\delta}{\delta}C'_\delta.$

The second item follows from the stronger claim below.
\begin{prop}\label{prop:no_division2}
Suppose $m_i,l_i,~i=1,\ldots,w$ are positive integers, except possibly $l_1$ which may be half an integer. Assume that $m_i\geq 3$ for all $i$, and that if $l_1$ is half an integer, then $l_1\geq \frac{7}{2}.$
Assume that $l_1 \geq ... \geq l_w$ and that the first $a$ $l_i$'s are at least $4$, the next $b$ are exactly $3$ and the others are smaller.
Assume also that if $l_1=l_2,$ and if $l_1\geq 4,$ then $m_1\geq m_i$ for any $i$ with $l_i=l_1.$
If $l_1=l_b<4,$ then $m_1\leq m_i$ for any $i$ with $l_i=l_1.$

Then we have:
\[\sum_{\{(k_i)_{i=1}^w:~k_i\geq 0,~\sum k_i=k\}}\prod_{i=1}^w(k_i+m_i)^{l_i-\frac{7}{2}}\leq 16^{w-1}(k+\sum_{1\leq i\leq a}m_i)^{l_1-\frac{7}{2}+\sum_{1<i\leq a+b}(l_i-\frac{5}{2})}.\]
\end{prop}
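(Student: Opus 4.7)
The plan is to induct on $w$, peeling off the smallest index at each step. Write
\[
S_w(k) := \sum_{\{k_1+\cdots+k_w=k,\, k_i\geq 0\}}\prod_{i=1}^w (k_i+m_i)^{l_i-7/2}.
\]
The base case $w=1$ is immediate: if $l_1\geq 4$ then $a=1$ and the two sides agree; if $l_1\leq 3$ then $a=0$ and $(k+m_1)^{l_1-7/2}\leq k^{l_1-7/2}$ follows because $l_1-7/2\leq 0$ (with the usual convention at $k=0$).

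For the inductive step I would use the decomposition
\[
S_w(k) = \sum_{k_w=0}^k (k_w+m_w)^{l_w-7/2}\, S_{w-1}(k-k_w),
\]
together with the fact that $S_{w-1}$ is nondecreasing in its argument (every summand is positive). Three cases arise according to $l_w$. In \emph{Case A} ($l_w\leq 2$), the tail sum $\sum_{k_w\geq 0}(k_w+m_w)^{l_w-7/2}$ converges to a small universal constant $C_A$ (using $m_w\geq 3$), so monotonicity gives $S_w(k)\leq C_A\,S_{w-1}(k)$; since dropping index $w$ affects neither $a$ nor $b$, the target exponent $E$ and base-point sum $M$ are unchanged, and the induction closes. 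In \emph{Case B} ($l_w=3$), dropping index $w$ lowers $b$ by one and the target exponent $E$ by $l_w-5/2=\tfrac12$ while leaving $M$ fixed, reducing the step to the discrete convolution estimate
\[
\sum_{k_w=0}^k (k_w+m_w)^{-1/2}(k-k_w+M)^{E-1/2} \leq 16\,(k+M)^E.
\]
In \emph{Case C} ($l_w\geq 4$, forcing $b=0$), dropping $w$ lowers $a$ by one, $M$ by $m_w$, and $E$ by $l_w-5/2$, reducing the step to a two-factor bound of the form $\sum(k_1+M_1)^{\alpha}(k_2+M_2)^{\beta}\leq 16\,(k+M_1+M_2)^{\alpha+\beta+1}$ valid for $\alpha,\beta\geq -1/2$. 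In both B and C I would apply the standard splitting trick: break the range at $k_w=k/2$ and bound the smaller factor by its maximum on each half. The tie-breaking conditions on the $m_i$'s (namely $m_1$ is maximal when $l_1=l_2\geq 4$, and minimal when $l_1=3$ and all other $l_i$ are smaller or equal) ensure that peeling the last index leaves $m_1$ inside $M$, preserving the correct base-point through the induction.

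The main obstacle is controlling the multiplicative constant in the discrete convolution estimates of Cases B and C. Asymptotically both reduce to the discrete analogue of the Beta integral $\int_0^1 u^\alpha(1-u)^\beta\,du$, which produces the correct order $(k+M)^{\alpha+\beta+1}$ transparently; the real work is verifying the constant $16$ \emph{uniformly} in $k$, $M$, $m_w$ and across the admissible exponent range. The delicate points are the boundary cases, where $a+b-1=0$ after a Case B reduction so that $M=0$: here one must use the half-integer hypothesis $l_1\geq 7/2$ together with the tie-breaking conventions to rule out degenerate configurations and keep the argument on track at small $k$. Once each step contributes a factor of at most $16$, iterating over the $w-1$ steps yields the claimed bound $16^{w-1}$.
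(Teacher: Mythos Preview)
Your monotonicity claim is false, and it is the load-bearing step in Case~A. The assertion ``$S_{w-1}$ is nondecreasing in its argument (every summand is positive)'' does not follow: take $w-1=1$ with $l_1\le 3$, so that $S_1(k)=(k+m_1)^{l_1-7/2}$ is strictly decreasing. More generally, whenever all $l_i\le 3$ the sum $S_{w-1}$ is decreasing in $k$, so the bound $S_w(k)\le C_A\,S_{w-1}(k)$ fails exactly in the regime you need it. Case~A therefore has to be handled by the same convolution/splitting device you describe for Cases~B and~C, not by monotonicity; and once you do that, you also have to deal with the endpoint $k_w=k$ when $a=0$ (so $M=0$) and $E<0$, since the inductive bound $S_{w-1}(0)\le 16^{w-2}\cdot 0^{E}$ is vacuous there.

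It is also worth noting that your induction runs in the opposite direction from the paper's. The paper takes $w=2$ as the base case and at each step \emph{merges the two largest indices}: it applies the $w=2$ estimate to the pair $(l_1,l_2)$, producing a new leading term with exponent $l_1'=l_1+l_2-\tfrac52$ (when $l_2\ge 3$) and $m_1'=m_1+m_2$ (when $l_2\ge 4$). This is precisely why the statement allows $l_1$ to be a half-integer $\ge \tfrac72$ and why the tie-breaking conditions on $m_1$ are phrased as they are: both are artifacts of the merged first index that appears during the paper's recursion, not of the original data. Peeling from the bottom, as you do, never creates half-integer exponents and so does not exploit that clause; it also forces you to carry the awkward $M=0$ case through the induction rather than absorbing it into a single $w=2$ analysis.
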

Consider first the case $w=2$. We consider several different cases.
If $l_2\geq 4$, the result is directly seen to hold.

If $l_1\geq\frac{7}{2}$ and $l_2<3,$  then $(k_1+m_1)^{l_1-\frac{7}{2}}$ is maximized when $k_1=k,$ and
\[
\sum_{k_1=0}^k((k-k_1)+m_2)^{l_2-\frac{7}{2}} < 2.
\]

If $l_1\geq \frac{7}{2}$ and $l_2 = 3$, then again $(k_1+m_1)^{l_1-\frac{7}{2}} \leq (k+m_1)^{l_1-\frac{7}{2}}$, and so it remains to show that
\[
\sum_{k_2=0}^k{(k_2+m_2)^{-\frac{1}{2}}} \leq 16 (k+m_1)^{\frac{1}{2}}.
\]
This is a consequence of the fact that $m_2\geq 3$ and that $\sum_{i=0}^k{(i+3)^{-\frac{1}{2}}} \leq 2k^{\frac{1}{2}}$.

Suppose now that $l_2\leq l_1\leq 3$.
The last $\frac{k}{2}$ terms in the left hand side are bounded by
\begin{align*}
\left(\sum_{k_1=\lfloor\frac{k}{2}\rfloor+1}^k(k-k_1+m_2)^{l_2-\frac{7}{2}}\right)(k/2+m_1)^{l_1-\frac{7}{2}}\leq 2(k/2+m_1)^{l_1-\frac{7}{2}+\frac{\delta_{l_2-3}}{2}},
\end{align*}
where $\delta_{-}$ is Kronecker's delta. Here again we are using the fact that $\sum_{i=0}^k{(i+3)^{-\frac{1}{2}}} \leq 2k^{\frac{1}{2}}.$
If $l_1=l_2,$ then by assumption on $m_1$ the first $\frac{k}{2}$ terms of the left hand side are bounded by
\begin{align*}
\left(\sum_{k_1=1}^{\lfloor\frac{k}{2}\rfloor}(k_1+m_1)^{l_1-\frac{7}{2}}\right)(k/2+m_2)^{l_2-\frac{7}{2}}\leq 2(k/2+m_1)^{l_2-\frac{7}{2}+\frac{\delta_{l_1-3}}{2}},
\end{align*}
Otherwise $l_2<l_1,$ and
\begin{align*}
&\left(\sum_{k_1=1}^{\lfloor\frac{k}{2}\rfloor}(k_1+m_1)^{l_1-\frac{7}{2}}\right)(k/2+m_2)^{l_2-\frac{7}{2}}\leq \frac{k+1}{2}m_1^{l_1-\frac{7}{2}}((k+1)/2)^{l_1-\frac{9}{2}}=\\
&=((k+1)m_1/2)^{l_1-\frac{7}{2}}\leq(k/2+m_1)^{l_1-\frac{7}{2}}.
\end{align*}

The result for general $w$ follows now by a simple induction argument, since for $w\geq 3,$ the $w=2$ step gives
\[\sum_{\{(k_i)_{i=1}^w:~k_i\geq 0,~\sum k_i=k\}}\prod_{i=1}^w(k_i+m_i)^{l_i-\frac{7}{2}}\leq
16\sum_{\{(k_i)_{i=1}^{w-1}:~k_i\geq 0,~\sum k_i=k\}}(k_1+m'_1)^{l'_1-\frac{7}{2}}\prod_{i=3}^w(k_{i-1}+m_i)^{l_i-\frac{7}{2}}\]
with $l'_1=l_1 +l_2-\frac{5}{2}$ if $l_2\geq 3,$ and otherwise $l'_1=l_1,~m'_1=m_1+m_2,$ when $l_2\geq 4$ and otherwise $m'_1=m_1.$
\end{proof}

\begin{proof}[Proof of Proposition \ref{prop:estimation_for_1-eps_terms}]
Denote $k/m$ by $\alpha.$
For nonnegative $l,$ it suffices to estimate
\[(1-\varepsilon)^{\alpha m/2}(1+\alpha)^lm^l\leq((1-\varepsilon)^{\alpha/2}(1+\alpha))^m m^l.\]
Now, $(1-\varepsilon)^{\alpha/2}(1+\alpha)$ is bounded by a constant $C'_\varepsilon,$ and the whole expression is thus bounded by $(C'_\varepsilon)^m m^l.$

If $-10\leq l \leq 0,$
\[(1-\varepsilon)^{k/2}(k+x)^{l}\leq 1\leq (2^{10})^m m^{-10}\leq (2^{10})^mm^l.\]
Choose $C_\varepsilon = max\{2^{10},C'_{\varepsilon}\}.$
\end{proof}

\bibliographystyle{amsabbrvc}
\bibliography{bibli_comb}

\end{document}